\title{AlMOST SURE ONE-ENDEDNSS OF A RANDOM GRAPH MODEL OF DISTRIBUTED LEDGERS }
\author{J. Feng, C. King and K. R. Duffy\\
\\
Department of Mathematics \\
Northeastern University \\
MA 02115, USA}
\date{}
\definecolor{alizarin}{rgb}{0.82, 0.1, 0.26}
\definecolor{ao(english)}{rgb}{0.0, 0.5, 0.0}
\newcommand{\ba}{\begin{align}} 
\newcommand{\ea}{\end{align}}
\def\be{\begin{eqnarray}}
\def\ee{\end{eqnarray}}
\def\bee{\begin{eqnarray*}}
\def\eee{\end{eqnarray*}}
\def\P{\mathbb{P}}
\newtheorem{thm}{Theorem}[section]
\newtheorem{cor}[thm]{Corollary}
\newtheorem{lem}[thm]{Lemma}
\newtheorem{prop}[thm]{Proposition}
\newtheorem{defn}[thm]{Definition}
\begin{document}

\maketitle

\begin{abstract}
Blockchain and other decentralized databases, known as distributed ledgers, are designed to store information online where all trusted network members can update the data with transparency. The dynamics of ledger's development can be mathematically represented by a directed acyclic graph (DAG). One essential property of a properly functioning shared ledger is that all network members holding a copy of the ledger agree on a sequence of information added to the ledger, which is referred to as consensus and is known to be related to a structural property of DAG called one-endedness. In this paper, we consider a model of distributed ledger with sequential stochastic arrivals that mimic attachment rules from the IOTA cryptocurrency. We first prove that the number of leaves in the random DAG is bounded by a constant infinitely often through the identification of a suitable martingale, and then prove that a sequence of specific events happens infinitely often. Combining those results we establish that, as time goes to infinity, the IOTA DAG is almost surely one-ended.\\
\textit{Keywrods}: blockchain, IOTA, stochastic directed acyclic graph, martingale \\
2020 Mathematics Subject Classification: Primary 60G50; Secondary 60G46, 05C80
\end{abstract}

\section{Background} 


A distributed ledger is a decentralized database where transactions are stored on a directed acyclic graph (DAG). The goal of any distributed ledger is to provide a secure and consistent record of transactions. Due to the widespread adoption of the methods for cryptocurrencies, there has been growing interest recently in formally establishing properties of the ledger \cite{Fer19,Go20,SS22,SS23,Zh20}.

In the DAG associated to a distributed ledger, each vertex represents a block or package of information. Each new vertex represents a new transaction, and is attached to one or more existing blocks according to a random attachment rule. The attachment mechanism also guarantees that a transaction will be finally linked to the selected existing blocks only after this new transaction finishes a time consuming task called proof of work (POW). It is the delay time that results from the POW that complicates the dynamics of distributed ledger. 

To better understand POW, consider the following description: first, a new vertex $A$ arrives and chooses one or more existing blocks in DAG to be attached to, and we call the selected blocks the parents of vertex $A$. Secondly, using information in $A$ and its parents, a question is generated. The user trying to upload this vertex $A$ will start solving this time consuming problem. Only after the question is solved (i.e. POW is completed), directed edges from $A$ to its parents are created, indicating that the POW for $A$ with its selected parents is finished, and therefore vertex $A$ is accepted into the DAG. The solution to the POW problem will also be stored in $A$ so that any changes in the data of $A$ or its parents will yield a different question such that the stored solution will no longer be correct and therefore all members of the ledger will know the altered data is invalid. This mechanism helps users to verify the data in the ledger and protect its record from being doctored. In order to doctor a transaction in a vertex $B$, the actor would need to solve all of the POW problems again for that block and any following blocks that is connected to vertex $B$, which would require a tremendous amount of computational power. Therefore, as more and more vertices establish path(s) toward a block, this block becomes increasingly reliable and resistant to manipulation. When considering distributed ledger, if a vertex is linked by any future vertex, the transactions in it are considered verified since at least one POW is finished to secure the data. Figure \ref{soD} provides an example of DAG.

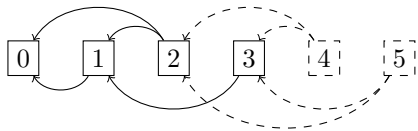
\begin{figure}[H]
\centering
\begin{tikzpicture}

    \node (0) [rectangle,draw] at (0,-4) {0};
    \node (1) [rectangle,draw] at (1,-4) {1};
    \node (2) [rectangle,draw] at (2,-4) {2};
    \node (3) [rectangle,draw] at (3,-4) {3};
    \node (4) [rectangle,draw,dashed] at (4,-4) {4};
    \node (5) [rectangle,draw,dashed] at (5,-4) {5};

    \graph{
        (1)->[bend left=60](0),
        (2)->[bend right=60](1),
        (2)->[bend right=60](0),
        (3)->[bend left=60](1),
        (4)->[bend right=60,dashed](2),
        (4)->[bend right=60,dashed](3),
        (5)->[bend left=60,dashed](2),
        (5)->[bend left=60,dashed](3)
    
    };
\end{tikzpicture}
\caption{A solid directed edge implies that the assciated POW has been completed and the data has been accepted to the ledger. For example, vertex 3 has selected 1 as its parent and finished its POW. A dashed vertex with outgoing dashed edge implies the POW has not yet been finished. For example, vertex 5 has selected 2 and 3 as its parents but its POW has not yet been finished.}
\label{soD}
\end{figure}

Due to the wide use of distributed ledger technologies, rigorously establishing mathematical properties related to their security and stability are of increasing interest. A vertex, corresponding to a ledger entry, becomes more reliable and resistant to manipulation as more vertices are connected to it through directed edges. In a mathematical model where vertices are added sequentially, questions of consistency and reliability can be framed in terms of the resulting DAG. If a vertex is connected by all but finitely many future vertices through directed edges, we call it as confirmed vertex, which speaks to the reliability of the related ledger entry. POW is a time consuming process which requires a great amount of computational power. When a vertex v finishes its POW, the number of previous vertices that are connected to v represents the amount of data secured by this POW with its consumed computational power. For a confirmed vertex, nearly all the computational power consumed in the future will be used to secure this confirmed vertex, hence the proportion of the confirmed vertices can be used to represent the efficiency in using the computational power. A structural property of an infinite DAG called one-endedness is also an indicator for the security of ledger. While a precise definition will be provided in Section \ref{sectionmain}, a heuristic description of one-endedness is that any two infinite paths in the DAG can both have an infinite overlap with a third path. In contrast to one-endedness, a graph has multiple ends when there exist two infinite paths such that any path intersect at least one of them only finitely many times. Illustrative examples of a DAG with one-ended property and one without are shown in Figure \ref{fig6}, if a graph is not one-ended, there will be infinitely many non-confirmed vertices which indicates the lack of efficiency in using the computational power to secure the ledger. In this paper, both confirmed vertices and one-ended property are analyzed.

\begin{figure}[H]
\centering
\begin{tikzpicture}

    \node (00) [rectangle,draw] at (0,0) {0};
    \node (01) [rectangle,draw] at (1,0.5) {1};
    \node (02) [rectangle,draw] at (1,-0.5) {2};
    \node (03) [rectangle,draw] at (2,0.5) {3};
    \node (04) [rectangle,draw] at (2,-0.5) {4};
    \node (05) [rectangle,draw] at (3,0.5) {5};
    \node (06) [rectangle,draw] at (3,-0.5) {6};
    \node (07) [rectangle,draw] at (4,0.5) {7};
    \node (08) [rectangle,draw] at (4,-0.5) {8};   

    \node (09) [rectangle] at (5,0.5) {...};
    \node (091) [rectangle] at (5,-0.5) {...};  
    
    \node (10) [rectangle,draw] at (7,0) {0};
    \node (11) [rectangle,draw] at (8,0.5) {1};
    \node (12) [rectangle,draw] at (8,-0.5) {2};
    \node (13) [rectangle,draw] at (9,0.5) {3};
    \node (14) [rectangle,draw] at (9,-0.5) {4};
    \node (15) [rectangle,draw] at (10,0.5) {5};
    \node (16) [rectangle,draw] at (10,-0.5) {6};
    \node (17) [rectangle,draw] at (11,0.5) {7};
    \node (18) [rectangle,draw] at (11,-0.5) {8};
    \node (19) [rectangle] at (12,0.5) {...};
    \node (191) [rectangle] at (12,-0.5) {...}; 
    
    \graph{
        (01)->(00),(02)->{(00),(01)},(03)->{(01),(02)},(04)->(02),
       (05)->(03),(06)->{(04),(05)},(07)->{(05),(06)},(08)->(06),
        (11)->(10),(12)->(10),(13)->(11),(14)->(12),(15)->(13),(16)->(14),(17)->(15),(18)->(16)
    };
\end{tikzpicture}
\caption{Here, directed edges describe connections and paths follow the reverse direction of directed edges. The graph on the left is one-ended where the paths $(0,1,3,5,...)$ and $(0,2,4,6,...)$ intersect with the path (0,1,2,3,5,6,7,...) infinitely many times, and hence they are considered paths within the same end. The graph on the right provides an example of a DAG without one-ended property as there is no path that intersects both $(0,1,3,5,...)$ and $(0,2,4,6,...)$ infinitely many times and hence these two paths are in two different ends. Furthermore, all vertices in the right graph are not confirmed vertices.}
\label{fig6}
\end{figure}
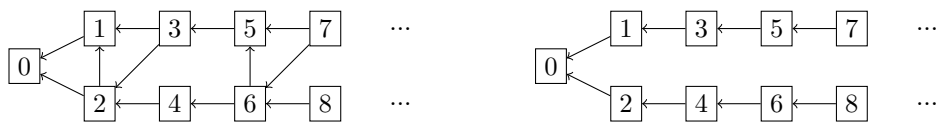

The ledger, in the form of the DAG and vertex contents, are typically stored in a peer to peer network, where a peer refers to the user that holds a local copy of the ledger and peers communicate to each other from time to time to update their local copy. At any one time, peers however, hold different copies of the ledger because of the frequency of communication. Therefore it is important to examine whether all peers agree on infinitely many vertices as time goes to infinity, which is refer to as the consensus property \cite{Go20}. Gopalan, Sankararaman, Elwalid and Vishwanath \cite{Go20} introduced mathematical definitions related to consensus and identified the importance of one-ended property of the DAG associated with the ledger under the assumption that the duration of POW is zero. 
 
The structure of the DAG develops differently depending on the parent selection algorithm as it determines all the edges in the graph. The Bitcoin system uses the algorithm called Nakamoto construction \cite{Naka08} where only one parent is selected within the vertices that have paths with maximum distance to the vertex 0 and different rules can be used to break ties. Another construction called the \textit{throughput optimal policy} \cite{Go20, LY15} assumes that all leaves in the graph, i.e. all the vertices that have not yet been attached by future vertices, are selected as parents for the arriving vertex. It is shown by \cite{Go20} that under a peer-to-peer setting, where multiple users hold different subsets of the DAG, if vertices arrive stochastically and choose parents based on the graph copy from one of the peers, the resulting infinite graph constructed by either the Nakamoto construction or the throughput optimal policy is almost surely one-ended.

While the one-endedness property has been established for those two attachment rules, there is another popular system called IOTA cryptocurrency or the tangle \cite{Popov16}. In it, each new vertex selects two parents at random with replacement from the set of leaves of the DAG. This algorithm introduces additional variability of creation of the edges and therefore complicates the DAG model. Considering two distinct models of attachment, Partha and Aditya \cite{PS20} discuss the one-endedness of the DAG modeling the tangle. The first model, called the backward model, assumes that an arriving vertex gets attached to its parents immediately while the parents are selected based on the state of the DAG at a earlier time. For example the parents of vertex arriving at $t$ are selected based on the DAG at $t-\epsilon$ and the corresponding edges are created at $t$. This model is designed to encapsulate the delay time during communication between peers to update the state of the ledger, and hence when the vertex arrives, only a previous version of the DAG instead of the lasted version are observed. It is shown that the infinite DAG with backward model is almost surely one-ended. The second model is called the forward model, it assumes that arrivals select parents at the current time, but the edges are created after a random time. For example, a vertex arrives at time $t$, but edges between the vertex and its parents are created at $t+\theta$. This model considers the duration of POW and how it effects the development of the DAG. For the forward model, it is suggested in \cite{Popov16} that with the proof used in backward model, one-endedness can be established if one assumes there is a positive possibility that an arrival can finish POW before the next arrival occurs, i.e. the POW might be finished almost instantly.

While \cite{Popov16} suggested a root to prove one-endedness for the forward model under the assumption that there is a positive probability that the duration of POW is shorter than the vertex inter-arrival time, it is natural to assume otherwise because the POW is designed to be time consuming for the purpose of security and vertices are usually arrives at a high rate such that there are multiple arrivals within the duration of POW. Therefore it is important to examine whether one-endedness holds without this assumption on the POW process. 

In this paper, we provide a proof of one-endedness for an infinite DAG modeling IOTA cryptocurrency assuming both the peer-to-peer setting and POW duration without the need to make the assumption on minimal POW time, where the definitions of the model will be provided in Section \ref{model}. The main idea is that by analyzing the evolution equations of the tangle, we show that the number of leaves is bounded above by a constant infinitely often. Whenever the number of leaves in below this bound, any event that is measurable within a constant time will happen with probability bounded by a positive constant from below and thus occurs infinitely often. The event we are interested in is mainly requires that all the arrivals within a fixed time interval choose their parents so that any future vertices will have a path to all the vertices in the past. This event builds up a \textit{bottleneck} structure of the graph that almost surely ensures one-endedness. Each interval related to bottleneck event acts like web connecting different sub-DAGs that combines all the branches into one frequently that one-endedness is ensured. Using the same construction we are also able to analyze the existence of the confirmed vertices and their population. The main result of this paper can be found in Theorems \ref{mainresult} and \ref{confirmed}.

\section{Definition of the model} \label{model}

We now introduce the dynamics of the DAG generated by a stochastic model of the IOTA system with deterministic inter-arrival time, multinomial POW duration, and random frequency of reconciliation of the ledger within peer-to-peer network. Let $t_n:=n$ for $n=0,1,2,...$. At time $t_0=0$, the DAG is assumed to consist of a single vertex labeled $0$. We assume a vertex labeled by $i$ arrives at each time $t_i$ for $i=1,2,3,...$ and starts its POW, where vertices are only added to the DAG at the time corresponding to the completion of their POW. With $\mathcal{V}(t_0):=\{0\}$, we define $\mathcal{V}(t_n)$ to be the set of vertices at $t_n$ whose POW are completed.  Let $(i,j)$ represent a directed edge from vertex $i$ to $j$, where  vertex $j$ is a parent of $i$. With $\mathcal{E}(t_0):=\emptyset$, $\mathcal{E}(t_n)$ is defined to be the set of the directed edges which we will refer to as \textit{solid edges}. We use these solid edges to represent finished POW relation between vertices. We will focus on the discrete stochastic process  $\mathcal{G}(t_n):=(\mathcal{V}(t_n),\mathcal{E}(t_n))$ that describe the evolution of DAG.

Each new arriving vertex $i$ independently chooses its duration of POW from a finite set of integer times. Let $(\Theta_i)_{i\in\mathbb{N}}$ be i.i.d multinomial random variables taking values in $\{h_1,h_2,...,h_M\}$ where $\Theta_i$ represents the duration required for the POW of vertex $i$, where $0<h_1<h_2<...<h_M$, $h_j\in\mathbb{N}$ and $\P(\Theta_i=h_j):=p_{\Theta,j}>0$. Without loss of generality we will assume $M\geq 2$ since the case with $M=1$ assume that the POW duration is fixed. We call an arrival with POW duration $h_i$ a Type $i$ arrival. We define
$N_i(t_n) := 1_{\{\Theta_n=h_i\}}$ where $1_{\{\}}$ is the indicator function. By construction, $\sum_{i=1}^M N_i(t_n) = 1$ as only one of the $N_i$ will be 1. On arrival, each vertex $i$ randomly chooses two parents with replacement from a subset of the graph which will be described later in equation (\ref{parentselection}). Although we will focus on the parent selection algorithm used in IOTA where only two potential parents are selected, the results and proofs can be generalized to the case considering any finite number of parents, which will be stated in Prop \ref{generization}. We use the random variables $X_i$ and $Y_i$ taking values in $\mathcal{V}(t_i)$ to denote the two parents for vertex $i$, where $X_i$ and $Y_i$ can take the same value which results in a single edge coming out from vertex $i$. 

After the POW of vertex $i$ is completed, the vertex is connected to the DAG with the edges from it to its parents, resulting in:
\be
\mathcal{V}(t_{n+1})&=&\mathcal{V}(t_{n})\cup\{k|t_k+\Theta_k=t_n\}\label{evov}\\
\mathcal{E}(t_{n+1})&=&\mathcal{E}(t_n)\cup\{(k,X_k),(k,Y_k)|t_k+\Theta_k=t_n\}.\label{evoe}
\ee

In addition to the essential notations, we will need additional variables to analyze the stochastic process describing the temporal development of the DAG. For $n=0,1,...$, let $\mathcal{V}'(t_n):=\{i|i<n\}\setminus\mathcal{V}(t_n)$ denote the set of vertices at $t_n$ whose corresponding POW have not been completed, and $\mathcal{E}'(t_n):=\{(i,X_i),(i,Y_i)|i\in \mathcal{V}'(t_n)\}$ denote set of directed edges that start from vertices in $\mathcal{V}'(t_n)$ and we call edges in $\mathcal{E}'(t_n)$ as \textit{dashed edges}. We define $\mathcal{G}'(t_n):=(\mathcal{V}'(t_n),\mathcal{E}'(t_n))$ which, together with $\mathcal{G}(t_n)$, can be used to produce a picture as in Figure \ref{soD}. At each time, the arriving vertex is added into $\mathcal{V}'$ and any vertex with completed POW is removed from $\mathcal{V}'$ and placed in $\mathcal{V}$. Similarly, directed edges are added into $\mathcal{E}'$ when the corresponding POW starts and they are removed when the POW are finished. These mechanisms are described by:
\bee 
\mathcal{V}'(t_{n+1})&=& \{i\in \mathcal{V}'(t_n) | t_i+\Theta_i>t_n\}\cup \{n\}\\
\mathcal{E}'(t_{n+1})&=& \{(i,j)\in \mathcal{E}'(t_{n})| t_i+\Theta_i>t_n\}\cup \{(n,X_n),(n,Y_n)\}.
\eee 

In the DAG $\mathcal{G}(t_n)=(\mathcal{V}(t_n),\mathcal{E}(t_n))$, let $\mathcal{L}(t_n):=\{v\in{\mathcal{V}(t_n)|(i,v)\notin \mathcal{E}(t_n) \forall i}\}$ represent the set of vertices with in-degree 0, i.e. the vertices that has no directed edge pointed toward them, which we call \textit{tips}. We use $L(t_n):=|\mathcal{L}(t_n)|$ to record the number of tips, and among the tips we distinguish \textit{pending} and \textit{free tips}. Define $\mathcal{W}(t_n) := \{i\in \mathcal{L}(t_n)|  (j,i)\in \mathcal{E}'(t_n) \text{ for some } j\}$ to represent the set of pending tips and $W(t_n):=|\mathcal{W}(t_n)|$ for number of pending tips. A tip is pending at time $t_n$ if it has been selected for POW by any transaction that arrived at some time $t_j$ with $j < n$. Define $\mathcal{F}(t_n):=\mathcal{L}(t_n)\setminus\mathcal{W}(t_n)$ to be the set of free tips and $F(t_n):=|\mathcal{F}(t_n)|$ to be the number of free tips. A tip is free at time $t_n$ if it has not been selected for POW before time $t_n$. It follows that $L(t_n)=F(t_n)+W(t_n)$. 

In this paper we consider a peer-to-peer network setting where infinite number of peers each stores a part or all of the ledger and each of them communicates to others from time to time to update its local realization of the ledger by taking the union of all the graphs from all peers. Let $(\epsilon_i)_{i\in \mathbb{N}}$ be i.i.d multinomial random variables that take values in a finite set $I_\epsilon\subset \mathbb{N}$ where $\min(I_\epsilon):=\epsilon_{min}>0$, $\max(I_\epsilon):=\epsilon_{max}$ and $\P(\epsilon_i=j):=p_{\epsilon,j}>0$ for $j\in I_\epsilon$. Using the convention that $G(t_n):=G(t_0)$ for $n<0$, we assume the parents of vertex $i$ are selected based on $G(t_i-\epsilon_i)$ to model the following situation: parents are chosen based on the DAG stored in one of the many peers which was updated via communication at $t_i-\epsilon_i$. To model the IOTA construction, the two parents for vertex $i$  are selected with replacement among the set of tips at time $t_i-\epsilon_i$ , $\mathcal{L}(t_i-\epsilon_i)$, with equal probability, that is 
\be
\P(X_i=j|\mathcal{G}(t_i-\epsilon_i))=\P(Y_i=j|\mathcal{G}(t_i-\epsilon_i)):=\frac{1}{L(t_i-\epsilon_i)}1_{\{j\in{\mathcal{L}(t_i-\epsilon_i)}\}}\label{parentselection}
\ee

The random variables $\Theta_n, \epsilon_n, X_n,Y_n$ determine the evolution of the graph. Let $\Sigma_i$ denote the $\sigma$-algebra generated by all the random variables $\Theta_n,\epsilon_n,X_n$ and $Y_n$ for $n=1,2,...,i$. Using the distributions of $\Theta_n, \epsilon_n,X_n,Y_n$ introduced, we have that for $k\in\{1,2,...,M\}$ and $j\in I_\epsilon$,
\be
\P(\Theta_n=h_k, \epsilon_n=j, X_n=v,Y_n=v'|\Sigma_{n-1})=\frac{p_{\Theta,k}\times p_{\epsilon,j}}{L(t_n-j)^2}1_{\{v,v'\in{\mathcal{L}(t_n-j)}\}}.\label{evolutiondistribution}
\ee
Among the tips being selected at each step, we define 
\be 
\delta_n:=|\{X_n,Y_n\}\cap\mathcal{F}(t_n)|\label{definedelta}
\ee
to record the number of free tips at time $t_n$ that are selected as parents by the vertex $n$.


We now describe how the function $F(t_n)$ is updated at each time step. Consider the sequence $(F(t_n))_{n\in\mathbb{N}}$, which increases because some POWs are completed and the corresponding vertices are added to $\mathcal{G}(t_{n+1})$. On the other hand, the number of free tips decreases because some free tips are selected as parent for the arriving vertex and hence they become pending tips, which is calculated by equation (\ref{definedelta}). So we have
\be
F(t_{n+1}) - F(t_n) = \left(\sum_{i=1}^{M}N_i(t_n - h_i)\right)  - \delta_n.\label{evolution}
\ee
The first summation identifies the arrivals that have just finished their proof of work at time $t_n$. The second term $\delta_n$ identifies the free tips being selected as parents at time $t_n$. It is worth mentioning that the free tips selected as parents will become pending tips at the next step, and so they will be counted as part of $W(t_{n+1})$, but we need not write out the update equation for $W$ since the one for $F$ is enough for the analysis performed in this paper.

For the stochastic process $(\mathcal{G}(t_n))_{n\in\mathbb{N}}$, we define an appropriate limit as time goes to infinity. Let $\mathcal{S}_*$ denote the space of connected DAGs rooted at vertex $0$ with all vertices having finite degrees. Define $d_*(\mathcal{G}_1,\mathcal{G}_2):=(r+1)^{-1}$, where $r$ is the biggest integer such that the two r-balls rooted at vertex $0$ in $\mathcal{G}_1$ and $\mathcal{G}_2$ are identical.  As established in \cite{Da07}, the metric space $(\mathcal{S}_*,d_*)$ is separable and complete, i.e. a Polish space. By construction of our model, $\mathcal{G}(t_i)\subseteq \mathcal{G}(t_j)$ for $i<j$, and hence, together with Lemma \ref{cauchy1}, it can be shown that the sequence $(\mathcal{G}(t_i))_{i\in\mathbb{N}}$ is almost surely Cauchy and hence the limit exist and equals to $\cup_{i=1}^{\infty} \mathcal{G}(t_i)$ as stated in Lemma \ref{temp5}. More details of the proof of the existence of limiting DAG is included in Theorem \ref{mainresult}.

Here we provide some terminologies related to directed graph for convenience of discussion: Let $\mathcal{V}$ and $\mathcal{E}$ denote the set of vertices and the set of directed edges in a DAG $\mathcal{G}$, and a pair $(v_i,v_j)$ denotes a directed edge from $v_i$ to $v_j$. A \textit{directed path} from $v_n$ to $v_1$ is a sequence $(v_n,v_{n-1},...,v_1)$ such that $(v_{i+1},v_{i})\in \mathcal{E}$ for all $i=1,2,3,...,n-1$. A vertex $v_1$ is said be to $\textit{reachable}$ from $v_n$ if there exist a directed path from $v_n$ to $v_1$.

\section{Main result}\label{sectionmain}
We first recall some definitions related to one-endedness including rays, an equivalent relation for rays and one-ended property.\cite{Hr64}
\begin{defn}\label{d31}
A \textit{ray} in a directed graph is an infinite sequence of vertices $v_0,v_1,v_2,...$ in which either $(v_{i+1},v_i)$ is a directed edge of the graph for all $i=0,1,...$, or $(v_{i},v_{i+1})$ is a directed edge of the graph for all $i=0,1,...$
\end{defn}

\begin{defn}\label{d32}
Two rays $r_1$ and $r_2$ in a DAG are equivalent if there is a ray $r_3$ (not necessarily distinct from $r_1$ and $r_2$) s.t. $r_3$ intersect $r_1$ and $r_2$ infinitely often.
\end{defn}

\begin{defn}\label{d33}
The DAG is one-ended if all rays are equivalent.
\end{defn}
Armed with these definitions, we are in a position to state our main result, Theorem \ref{mainresult}, on the one-endedness of the stochastic DAG describing an IOTA-like ledger will be provided as follows.
\begin{thm}\label{mainresult}
Under the assumptions of the model defined in section \ref{model}, for any value of $M$ which records the number of choices of POW time, letting the tangle grow as $t\rightarrow\infty$, the corresponding infinite DAG  $\mathcal{G}(\infty):=\lim_{n\rightarrow\infty}\mathcal{G}(t_n)$ exist almost surely in the metric space $(\mathcal{S}_*,d_*)$, $\mathcal{G}(\infty)=\cup_{i=1}^{\infty} \mathcal{G}(t_i)$ and it is one-ended almost surely.
\end{thm}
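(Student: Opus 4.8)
The plan is to first settle existence and then reduce one-endedness to the construction of infinitely many ``cut vertices.'' \emph{Existence of the limit.} Every vertex has out-degree at most $2$ by construction, and a vertex $v$ can be chosen as a parent of an arrival $i$ only when $t_i-\epsilon_i$ lies between the completion time of $v$ and the first time a directed edge points at $v$; since $\epsilon_i\le\epsilon_{max}$ and $v$ is never again a tip after being pointed at, only finitely many arrivals can ever select $v$, so $v$ has finite in-degree and hence finite degree. Because $\mathcal{G}(t_i)\subseteq\mathcal{G}(t_j)$ for $i<j$, the $r$-ball about $0$ is eventually constant for each $r$, which is the content of Lemma~\ref{cauchy1}; Lemma~\ref{temp5} then gives that the $d_*$-limit exists and equals $\cup_i\mathcal{G}(t_i)$. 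It remains to establish one-endedness.

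\emph{A recurrence estimate for the tips.} The driving input is that there is a constant $C$ with $L(t_n)\le C$ for infinitely many $n$, almost surely. Starting from the exact identity (\ref{evolution}), the term $\sum_i N_i(t_n-h_i)$ has conditional mean $1$, while $\delta_n\in\{0,1,2\}$ has conditional mean bounded away from $0$ once $L(t_n)$ is large, since a uniformly chosen pair of tips is then very likely to contain a free tip (after controlling $W$ via $L=F+W$). Thus a suitable truncation of $L(t_n)$ is a supermartingale off a bounded set, and an Azuma/optional-stopping argument forces $L$ to drop below a fixed level $C$ infinitely often. This is the martingale step advertised in the introduction; the only delicate point is the bookkeeping that relates $W$ to $F$.

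\emph{The bottleneck event.} Fix $C$ and a window length $T=T(C,h_M,\epsilon_{max})$ chosen large enough to ``flush and refunnel'' the system: at least $h_M$ steps so that every POW in progress at the start of the window completes, then a bounded number of ``merge'' steps, then at least $\epsilon_{max}$ more steps. On $\{L(t_n)\le C\}$ define $B_n$, measurable with respect to $\{\Theta_k,\epsilon_k,X_k,Y_k:\ n\le k\le n+T\}$, by prescribing the POW durations and parent choices of all arrivals in $[t_n,t_n+T]$ so that (i) after the flush there is a single tip $w^*$, which is therefore an ancestor of every vertex born by that time; (ii) a bounded sequence of arrivals then attaches, forming a vertex $v$ with a directed path $v\to\cdots\to w^*$; and (iii) from then on the unique-tip property is maintained for more than $\epsilon_{max}$ steps, so that there is a time $s_0$ with $\mathcal{L}(s)\subseteq D_v$ for all $s\ge s_0$ and no in-flight vertex at $s_0$ having a parent outside $D_v$, where $D_v$ is the set of vertices with a directed path to $v$. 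Since at most $T+h_M$ vertices can join during the window, $L$ stays bounded by $C+T+h_M$ throughout, so by (\ref{evolutiondistribution}) each prescribed choice has probability at least a fixed positive constant, giving $\P(B_n\mid\Sigma_{n-1})\ge c>0$ on $\{L(t_n)\le C\}$. One then checks, using the $\epsilon$-delayed selection rule, that on $B_n$ the vertex $v$ is a genuine \emph{cut vertex}: every vertex of $\mathcal{G}(\infty)$ is comparable to $v$, and no directed edge can run from a strict descendant of $v$ to a strict ancestor of $v$, since such an ancestor has had positive in-degree, hence is no longer a tip, by the time any descendant of $v$ selects its parents; consequently every directed path from a strict descendant of $v$ down to $0$ passes through $v$.

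\emph{Conclusion.} Combining the recurrence estimate with $\P(B_n\mid\Sigma_{n-1})\ge c$ on $\{L(t_n)\le C\}$, a stopping-time extraction of disjoint windows followed by the conditional (L\'evy) Borel--Cantelli lemma shows $B_n$ occurs for infinitely many $n$ a.s.; this yields cut vertices $v_1,v_2,\dots$ with strictly increasing completion times. Given any ray $w_0,w_1,\dots$ as in Definition~\ref{d31} (necessarily of the first type, with strictly increasing completion times along $w_j$), for every large $k$ the vertex $w_0$ is born before $v_k$ while some $w_j$ is born after the associated $s_0$, so $w_0$ is a strict ancestor and $w_j$ a strict descendant of $v_k$, and the cut-vertex property forces $v_k\in\{w_0,\dots,w_j\}$; hence every ray meets all but finitely many of the $v_k$. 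Concatenating directed paths $v_{k+1}\to\cdots\to v_k$ produces a single ray through all the $v_k$, and it meets any two given rays infinitely often, so by Definitions~\ref{d32} and~\ref{d33} the DAG $\mathcal{G}(\infty)$ is one-ended. The principal obstacle is the design of $B_n$ in the third step: making the flush-and-funnel prescription explicit enough to provably yield a cut vertex, while keeping its conditional probability bounded below uniformly, against the interaction of the POW delay with the $\epsilon$-delayed parent selection.
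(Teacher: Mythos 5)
Your overall architecture (a martingale recurrence estimate for the tip count, a positive-probability ``bottleneck'' event on the bounded-tip set, a conditional Borel--Cantelli extraction, then a third ray threaded through the bottlenecks) is the same as the paper's, but the specific bottleneck you design --- flush down to a \emph{single} tip $w^*$ and maintain a unique tip for more than $\epsilon_{max}$ steps so as to manufacture a cut vertex --- is not realizable in this model, so the key step fails. Arrivals come one per time unit and each completes its POW within $[h_1,h_M]$ steps, so every window of length $w$ contains at least $w+h_1-h_M$ completions no matter how you prescribe the durations; and a vertex completing at time $s$ cannot lose tip status before $s+\epsilon_{min}+h_1$, because any vertex attaching to it must first see it in $\mathcal{G}(t-\epsilon)$ (hence arrives no earlier than $s+\epsilon_{min}$) and then finish a POW of length at least $h_1$. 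Taking $w=\epsilon_{min}+h_1$, at every time there are at least $\epsilon_{min}+2h_1-h_M$ vertices that completed recently and are still unattached tips; so whenever $\epsilon_{min}+2h_1-h_M\ge 2$ (e.g.\ $h_1=5$, $h_M=6$, $\epsilon_{min}=3$) the tip count never returns to $1$ and your event $B_n$ has probability zero, not probability at least $c$. Even when $h_M$ is large enough to permit a momentary unique tip, keeping it unique forces consecutive completions to be at least $\epsilon_{min}+h_1$ apart, which the rate-one completion flow tolerates only for on the order of $h_M-h_1$ steps, so the maintenance phase (iii) for more than $\epsilon_{max}$ steps is also impossible in general. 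This is not a bookkeeping issue: no cut vertex need ever exist in $\mathcal{G}(\infty)$, and the paper never produces one.

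The paper's construction is built precisely to avoid this obstruction: after a tidying phase (all in-flight POWs forced to duration $h_M$, enough free tips accumulated) and a preparation phase (a reserved block $\mathcal{F}_i^B$ of free tips kept unselected), Step C lays down a width-$c_i$ ``interchange'' mesh over roughly $2c_i^2$ arrivals, whose conclusion (Lemmas \ref{temp2} and \ref{temp4}) is only that every vertex arriving after $t_i+\kappa_C$ has a directed path to \emph{every} vertex of $\mathcal{V}(t_i)$ --- a bottleneck property that never requires the tip count to drop to one; your Borel--Cantelli and third-ray arguments would go through essentially unchanged once $B_n$ is replaced by such an event. Two further points. First, stabilization of the $r$-balls does not follow from monotonicity of $(\mathcal{G}(t_i))$ alone: a perpetually unselected tip near the root could keep acquiring new children, so the existence part also rests on the bottleneck events (tips recede from the root, Lemma \ref{cauchy1}), and hence inherits the gap above. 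Second, $\sum_i N_i(t_n-h_i)$ is $\Sigma_{n-1}$-measurable rather than of conditional mean one, which is why the paper compares $F$ to an auxiliary walk using the deterministic sandwich bound (\ref{randombound}) instead of treating $L$ itself as a supermartingale off a bounded set; that part of your sketch is repairable, but the cut-vertex step is not.
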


In order to prove Theorem \ref{mainresult}, we introduce some lemmas and construct a sequence of events that joins all rays together and happens infinitely often. Lemma \ref{randomwalk} recounts a suppermartingale result and it is used to prove Lemma \ref{L<b} which states that the number of tips is bounded above by a finite constant infinitely often. This will allow us to identify a sequence of events that join all rays together, prove that these events happen infinitely often as stated in Lemma \ref{Binfinitelyoften}, and then finally prove one-endedness using a critical property described in Lemma \ref{temp4}. At the same time, analyzing this sequence of events yields Lemma \ref{cauchy1} which further proves existence of the limiting DAG, $\mathcal{G}(\infty)$.
\begin{lem}\label{randomwalk}
Let $b<b_0\in \mathbb{R}$ and $Y(t_n)$ be a supermartingale such that $Y(t_0)=b_0$, $Y(t_{n+1})-Y(t_n)$ takes values in $\{-1,0,1\}$ and $P(Y(t_{n+1})-Y(t_n)=-1|\{Y(t_k):k \leq n\})> p'$ for some $p'>0$. Defining $\tau=\inf\{n|Y(n)\leq b\}$, we have $P(\tau<\infty)=1$. 
\end{lem}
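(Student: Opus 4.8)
The plan is to argue by contradiction: assume $\P(\tau=\infty)>0$ and obtain a contradiction from the interplay of two classical facts — a supermartingale bounded below converges almost surely, and an event whose conditional probabilities sum to infinity occurs infinitely often (the conditional, or L\'evy, form of the second Borel--Cantelli lemma). Write $\mathcal{F}_n:=\sigma(Y(t_0),\dots,Y(t_n))$ for the natural filtration, so the hypothesis reads $\P(Y(t_{n+1})-Y(t_n)=-1\mid\mathcal{F}_n)>p'$. Since $Y(t_0)=b_0>b$ we have $\tau\ge 1$, and $\{\tau>n\}=\{Y(t_k)>b\text{ for all }k\le n\}\in\mathcal{F}_n$, so $\tau$ is an $(\mathcal{F}_n)$-stopping time.

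First I would pass to the stopped process $Z_n:=Y(t_{n\wedge\tau})$, which by the optional stopping theorem for supermartingales is again a supermartingale for $(\mathcal{F}_n)_n$. The key point is that $Z_n$ is bounded below: on $\{\tau>n\}$ one has $Z_n=Y(t_n)>b$, while on $\{\tau\le n\}$ one has $Z_n=Y(t_\tau)\ge Y(t_{\tau-1})-1>b-1$ because increments are at least $-1$ and $Y(t_{\tau-1})>b$; hence $Z_n\ge b-1$ for all $n$. A supermartingale bounded below converges almost surely to an almost surely finite limit $Z_\infty$. Since every increment of $Y$, and therefore of $Z$, lies in $\{-1,0,1\}$, each $Z_n$ lies in the discrete set $b_0+\mathbb{Z}$, so a convergent trajectory must be eventually constant: almost surely there is a finite random $N$ with $Z_{n+1}-Z_n=0$ for all $n\ge N$.

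On the other hand, set $A_n:=\{Z_{n+1}-Z_n=-1\}\in\mathcal{F}_{n+1}$. On $\{\tau>n\}$ the stopped increment equals the true increment $Y(t_{n+1})-Y(t_n)$, and $\{\tau>n\}\in\mathcal{F}_n$, so $\P(A_n\mid\mathcal{F}_n)\ge p'\,1_{\{\tau>n\}}$; consequently on the event $\{\tau=\infty\}$ we have $\sum_n\P(A_n\mid\mathcal{F}_n)=\infty$, and the conditional Borel--Cantelli lemma gives that, almost surely on $\{\tau=\infty\}$, the events $A_n$ occur for infinitely many $n$. This directly contradicts the eventual-constancy statement of the previous paragraph, so $\P(\tau=\infty)=0$, which is the claim.

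The main obstacle is not any single computation but applying the two classical inputs to the right objects: martingale convergence needs a lower bound that is only available after stopping at $\tau$ (the original $Y$ need not be bounded below), and the Borel--Cantelli input must be used in its conditional form and localized to the sub-event $\{\tau=\infty\}$. It is also worth keeping track of where the supermartingale hypothesis is essential — without it the walk could carry a net upward drift and stay above $b$ forever — since it is exactly the inequality $\E[Y(t_{n+1})\mid\mathcal{F}_n]\le Y(t_n)$ that keeps $Z_n$ from escaping to $+\infty$ and forces convergence.
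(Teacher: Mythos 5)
Your proof is correct, but it takes a different route from the paper, which disposes of this lemma by citing it as a standard consequence of Doob's optional stopping theorem: the intended argument (sketched in a commented-out passage) introduces a second barrier $b_2>b_0$, uses the uniform lower bound $p'$ on downward steps to show the two-sided exit time has finite expectation, applies optional stopping to get the quantitative bound $\P(\tau_1<\tau_2)\geq b_2/(b_2-b_1)$ on hitting the lower barrier first, and then lets $b_2\to\infty$. You instead stop $Y$ at $\tau$, invoke almost sure convergence of a supermartingale bounded below, observe that a lattice-valued convergent trajectory is eventually constant, and contradict this on $\{\tau=\infty\}$ via the conditional (L\'evy) Borel--Cantelli lemma applied to the events $\{Z_{n+1}-Z_n=-1\}$, using $\{\tau>n\}\in\mathcal{F}_n$ to localize the hypothesis correctly; all of these steps check out, including the lower bound $Z_n>b-1$ for the stopped process and the measurability of $A_n$ in $\mathcal{F}_{n+1}$. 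The trade-off is that the paper's barrier argument yields explicit hitting-probability estimates (and, with minor extra work, moment bounds on $\tau$), whereas your soft argument avoids verifying $\E[\tau]<\infty$ and any optional-stopping integrability conditions, and in fact needs less than the stated hypothesis: it only requires that the conditional probabilities of a nonzero (or downward) step have divergent sum along $\{\tau=\infty\}$, rather than a uniform lower bound $p'$, so it generalizes more readily.
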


Lemma \ref{randomwalk} is a standard result of supermartingale with bounded difference, which can be proved using Doob's optional stopping time lemma \cite{Williams91}. We will relate the stochastic process $L(t_n)$, which records the number of tips, to the proccess $F(t_n)$, which records the number of free tips and can be related to a supermartingale satisfying the conditions in Lemma \ref{randomwalk}. This idea will help us prove the following lemma:
\begin{lem} \label{L<b}
If $b\in(10 h_M-6h_1+3M\epsilon_{max} +2,\infty)$ and define event $\mathcal{A}_i=\{L(t_i)\leq b\}$. Then the sequence of events $\{\mathcal{A}_i\}_{i=1}^\infty$ happens infinitely often almost surely.
\end{lem}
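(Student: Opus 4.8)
The plan is to deduce Lemma~\ref{L<b} from Lemma~\ref{randomwalk} by manufacturing, out of the free-tip count $F(t_n)$, a supermartingale to which that lemma applies, and then turning the resulting finite-hitting-time statement into an ``infinitely often'' statement by an excursion argument. \emph{Step 1: reduce from $L$ to $F$.} I would first record two deterministic bounds. Since every POW lasts at most $h_M$ steps, the set $\mathcal{V}'(t_n)$ of vertices with unfinished POW at $t_n$ satisfies $\mathcal{V}'(t_n)\subseteq\{n-h_M+1,\dots,n-1\}$, so $|\mathcal{V}'(t_n)|\le h_M-1$; and since every pending tip receives a dashed edge from some vertex of $\mathcal{V}'(t_n)$ and each such vertex emits at most two edges, $W(t_n)\le 2(h_M-1)$. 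Hence $L(t_n)=F(t_n)+W(t_n)\le F(t_n)+2h_M-2$, and it is enough to show $F(t_n)\le b-2h_M+2$ infinitely often.

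\emph{Step 2: compensate $F$ into a $\{-1,0,1\}$-valued supermartingale.} By (\ref{evolution}), $F(t_{n+1})-F(t_n)=\sum_i N_i(t_n-h_i)-\delta_n$, whose positive part can be as large as $M$ because several POWs may complete simultaneously, so $F$ itself is not of the form Lemma~\ref{randomwalk} requires. The key point is that the number of POWs completing at step $n$ equals the number of vertices leaving $\mathcal{V}'$ at that step, so with $\Phi(t_n):=F(t_n)+|\mathcal{V}'(t_n)|$ one gets $\Phi(t_{n+1})-\Phi(t_n)=1-\delta_n\in\{-1,0,1\}$ and $F(t_n)\le\Phi(t_n)\le F(t_n)+h_M-1$. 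Conditioning on $\Sigma_{n-1}$ (with respect to which $\mathcal{F}(t_n)$, $\mathcal{L}(t_n-j)$ and $|\mathcal{V}'(\cdot)|$ are all measurable), the parents $X_n,Y_n$ are i.i.d.\ uniform on $\mathcal{L}(t_n-\epsilon_n)$, so $\E[\delta_n\mid\Sigma_{n-1}]$ and $\P(\delta_n=2\mid\Sigma_{n-1})$ are $p_{\epsilon,\cdot}$-weighted averages over $j\in I_\epsilon$ of expressions in $f_j/L_j$, where $L_j:=L(t_n-j)$ and $f_j:=|\mathcal{L}(t_n-j)\cap\mathcal{F}(t_n)|$. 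The combinatorial heart is that a member of $\mathcal{L}(t_n-j)$ can fail to lie in $\mathcal{F}(t_n)$ only by being already pending at $t_n-j$ ($\le 2(h_M-1)$ such) or by being selected as a parent by one of the $\le\epsilon_{max}$ arrivals with labels in $[n-\epsilon_{max},n)$ ($\le 2\epsilon_{max}$ such), so $f_j\ge L_j-2(h_M-1)-2\epsilon_{max}$. Hence there is a constant $c_0=c_0(h_1,h_M,M,\epsilon_{max})$ such that whenever $\Phi(t_m)>b-2h_M+2$ for every $m$ in the window $[n-\epsilon_{max},n]$ — which, via $F\le\Phi\le F+h_M-1$ and the hypothesis on $b$, forces $F(t_m)>c_0$ there, hence $f_j/L_j$ close enough to $1$ — one has $\E[\delta_n\mid\Sigma_{n-1}]\ge 1$ and $\P(\delta_n=2\mid\Sigma_{n-1})\ge p'$ for a fixed $p'>0$. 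In other words, on any stretch where $\Phi$ has stayed above $b-2h_M+2$ for the last $\epsilon_{max}+1$ steps, $\Phi$ is a supermartingale with $\{-1,0,1\}$ increments and downward-step probability $\ge p'$ — exactly the situation Lemma~\ref{randomwalk} is built for.

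\emph{Step 3: hitting time, then ``infinitely often''.} Applying Lemma~\ref{randomwalk} to $\Phi$ stopped at $\tau:=\inf\{n>n_0:\Phi(t_n)\le b-2h_M+2\}$, where $n_0$ is any time at which $\Phi$ has already been above that level for $\epsilon_{max}$ consecutive steps, yields $\tau<\infty$ a.s., and then $L(t_\tau)\le\Phi(t_\tau)+2h_M-2\le b$. To upgrade to ``infinitely often'' I would set $\sigma_0=0$ and, inductively, $\rho_k=\inf\{n>\sigma_k:\Phi(t_n)>b-2h_M+2\}$ and $\sigma_{k+1}=\inf\{n>\rho_k:\Phi(t_n)\le b-2h_M+2\}$. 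If some $\rho_k=\infty$ then $\Phi$, hence $F$, hence $L$, is bounded by a constant for all large $n$ and we are done; otherwise every $\rho_k<\infty$, and since $\Phi$ has $\{-1,0,1\}$ increments it has only just crossed the threshold at $\rho_k$, so within a further $\epsilon_{max}$ steps the window condition of Step~2 is in force, and Lemma~\ref{randomwalk} applied from that point (stopped at $\sigma_{k+1}$) gives $\sigma_{k+1}<\infty$ a.s. Thus $\{n:L(t_n)\le b\}\supseteq\{\sigma_k:k\ge 0\}$ is a.s.\ infinite.

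\emph{Main obstacle.} Steps~1 and~3 are routine; the work is Step~2 — verifying that $\Phi$ is a supermartingale with a uniform lower bound on the probability of a downward step. This is the delicate part precisely because of the two features the model adds over earlier work: the parents of vertex $n$ are drawn from the \emph{delayed} tip set $\mathcal{L}(t_n-\epsilon_n)$ while $\delta_n$ counts tips still free at the \emph{current} time $t_n$, so one must control how the tip population and the free/pending split drift over a window of length up to $\epsilon_{max}$; and the simultaneous completion of up to $M$ POWs is what makes the compensator $|\mathcal{V}'(t_n)|$ necessary. Tracking all of these constants carefully enough that they are dominated by $10h_M-6h_1+3M\epsilon_{max}+2$ is what fixes the explicit threshold in the hypothesis.
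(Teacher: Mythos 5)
Your outline is the paper's: bound the pending tips, compensate $F$ into a process with increments $1-\delta_n$, show it is a supermartingale with downward-step probability bounded below once the free tips form a majority of the delayed tip set $\mathcal{L}(t_n-\epsilon_n)$, invoke Lemma~\ref{randomwalk}, and convert the finite hitting time into ``infinitely often'' by excursions. Your compensator $\Phi(t_n)=F(t_n)+|\mathcal{V}'(t_n)|$ is in fact the paper's comparison process $Y$ up to the additive constant $|\mathcal{V}'(t_\alpha)|$ on each excursion, and your decomposition of $\mathcal{L}(t_n-j)\setminus\mathcal{F}(t_n)$ into ``already pending'' plus ``selected by one of the last $j$ arrivals'' is exactly the paper's estimate. (Minor corrections: under the paper's conventions $|\mathcal{V}'(t_n)|\le h_M$ and $W(t_n)\le 2h_M$, not $h_M-1$ and $2h_M-2$; and $\E[\delta_n\mid\Sigma_{n-1}]\ge 1$ needs a small margin beyond ``$f_j/L_j>1/2$'' because $\delta_n=|\{X_n,Y_n\}\cap\mathcal{F}(t_n)|\le 1$ on the collision event $X_n=Y_n$ --- this is why the paper carries the ``$+2$'' inside $a^*$.)

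The genuine gap is in Step 2, and it is not just unfinished bookkeeping: as written, the step fails for part of the admissible range of $b$. You peg the excursion level to $b$ itself ($\Phi\le b-2h_M+2$) and use only the one-sided slack $\Phi-F\le h_M$, so the supermartingale property must hold whenever $F>b-3h_M+2$; and your majority estimate needs $L(t_n-j)$ of order $4h_M+4\epsilon_{max}$, which you guarantee only through $L(t_n-j)\ge F(t_n-j)$ on a window. But when $h_1$ is close to $h_M$ the hypothesis threshold is only about $4h_M+3M\epsilon_{max}$, so for $b$ near the bottom of the range and $h_M$ large, $b-3h_M+2\approx h_M+3M\epsilon_{max}$ lies far below both your requirement and even the paper's majority threshold $a^*=2h_M+3M\epsilon_{max}+2$; Lemma~\ref{randomwalk} then cannot be applied on your excursions, and your argument only yields $L\le c$ infinitely often for a $c$ exceeding some admissible $b$. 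The paper avoids this in two ways you should adopt: first, the majority property is derived from the current value alone, via $|\mathcal{L}(t_n-j)\cap\mathcal{F}(t_n)|\ge F(t_n)-M\epsilon_{max}$ and $|\mathcal{L}(t_n-j)\setminus\mathcal{F}(t_n)|\le 2h_M+2\epsilon_{max}$, so no window condition is needed; second, the supermartingale is stopped at a level tied to $a^*$ plus the comparison slack, not to $b$, and that slack is $2(h_M-h_1)$, which vanishes as $h_1\to h_M$ --- this is precisely what the $-6h_1$ in the hypothesis encodes. Your compensator can be repaired to match: since the last $h_1$ arrivals always have unfinished POW, $h_1\le|\mathcal{V}'(t_n)|\le h_M$, so $F\le\Phi-h_1$ and the effective slack is $h_M-h_1$; stopping $\Phi$ at $a^*+h_M$ then gives, at the stopping time, $L\le a^*+3h_M-h_1=5h_M-h_1+3M\epsilon_{max}+2\le 10h_M-6h_1+3M\epsilon_{max}+2<b$, and with that change your argument closes and is arguably tidier than the paper's counting comparison in equation (\ref{randombound}).
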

\begin{proof}
By the definition, $L(t_n)=F(t_n)+W(t_n)$ where $W(t_n)$ records the number of pending tips. First we show that $W(t_n)\leq 2h_M$, and then we relates $F(t_n)$ to a supermartingale in order to show that $F(t_n)$ is bounded above by a constant infinitely often, finally the results for $W(t_n)$ and $F(t_n)$ proves Lemma \ref{L<b}.

First we show that $W(t_n)\leq 2h_M$. Note that the longest time for a pending tip to remain as a tip is $h_M$, hence an vertex $v\in\mathcal{W}(t_n)$ must satisfy that $\min\{t_k|v\in\mathcal{W}(t_k)\}\in [t_n-h_M+1,t_{n}]$, i.e. a pending tip at time $t_n$ implies that it was first selected as parents and became a pending tip at some time within $[t_n-h_M+1,t_n]$. Hence using equation (\ref{definedelta}) we conclude that $W(t_n)\leq\sum_{k=t_n-h_M}^{t_n-1}\delta_n$. Since at most two free tips can be selected as parents at each step, we have $\delta_n\leq2$. Therefore, $W(t_n)\leq\sum_{k=t_n-h_M}^{t_n-1}\delta_{k}\leq 2h_M$.

With the fact that $W(t_n)\leq 2h_M$, in order to prove Lemma \ref{L<b}, it suffices to show $F(t)\leq a$ for some $a\in(8 h_M-6h_1+3M\epsilon_{max} +2,\infty)$ infinitely often. The idea is to compare the process $F(t)$ to a supermartingale $Y(t_n)$ which satisfies the conditions in Lemma $\ref{randomwalk}$ such that $|F(t_n)-Y(t_n)|$ is bounded by a constant. By the fact that $F(t)$ is close to the constructed supermartingale which reaches a low value within finite time with probability 1, the result follows. 

Suppose that at $t_\alpha$, $F(t_\alpha)>a$. We wish to show that if $a>8 h_M-6h_1+3M \epsilon_{max} +2$, then $F(t_n)\leq a$ for some finite $n>\alpha$ with probability 1. If this is true for any $\alpha\in(h_M,\infty)$, then for any $k\in \mathbb{N}$ we have $P(\cap_{n=k}^{\infty}\{F(t_n)>a\})=0$, which implies $P(\cup_{k=1}^{\infty}\cap_{n=k}^{\infty}\{F(t_n)>a\})=0$ and hence $P(\{F(t_n)\leq a\} \text{ infinitely often})=1$ which would prove Lemma \ref{L<b}.

Recall that $N_i(t_j)=1$ if the vertex $j$ has that the duration of POW $\Theta_j$ is $h_i$ and $N_i(t_j)= 0$ otherwise. Recall the evolution equation of $F(t_n)$ described in equation (\ref{evolution}) and that the term  $\sum_{i=1}^{M}{N_i(t_n-h_i)}$ records the number of vertices whose corresponding POWs have just been finished. We can count the value of $\sum_{i=1}^{M}{N_i(t_n-h_i)}$ with a different idea: For any vertex $j\in[n-h_M,n-h_1]$ we check if the vertex $j$ has POW duration being $n-j$, i.e. 
\be
F(t_{n+1})-F(t_{n})=-\delta_{n}+\sum_{i=1}^{M}{N_i(t_n-h_i)}=-\delta_{n}+\sum_{j=n-h_M}^{n-h_1}\left(\sum_{i:h_i=n-j}N_i(t_j)\right).\label{recount}
\ee
For any $n>\alpha$, the same idea for equation (\ref{recount}) applies and we conclude that
\be
F(t_n)-F(t_\alpha)&=&-\sum_{j=\alpha}^{n-1}\delta_j + \sum_{j=\alpha}^{n-1}\sum_{i=1}^{M}{N_i(t_j-h_i)}\nonumber \\
&=&-\sum_{j=\alpha}^{n-1}\delta_j+\sum_{j=\alpha-h_M}^{n-1-h_1}\left(\sum_{i:h_i+j\in[\alpha,n-1]}N_i(t_j)\right). \label{doublesum}
\ee
The two double summations in equation (\ref{doublesum}) calculate the number of vertices that finish their POWs during the interval $[t_\alpha,t_{n-1}]$. We separate the possible vertices that might finish their POWs during the interval $[t_\alpha,t_{n-1}]$ into two cases: Case 1, if for the arriving vertex at the time $t_j$, it is the case that $t_j+[h_1,h_M]\subseteq [t_\alpha,t_{n-1}]$, which corresponds to the case that the vertex finishes its POW within the interval $[t_\alpha,t_{n-1}]$ regardless of the duration of POW, then this vertex will contribute 1 increment in the value of $F(t_n)-F(t_{\alpha})$; Case 2, for any vertex $j$ such that $t_j+[h_1,h_M]\cap [t_\alpha,t_{n-1}]\neq \emptyset$ and $ t_j+[h_1,h_M]\not\subseteq [t_\alpha,t_{n-1}]$, which means this vertex may or may not finish POW during $[t_\alpha,t_{n-1}]$ depending on duration of POW, this vertex contributes either 0 or 1 increment in equation (\ref{doublesum}). Note that if a vertex $j$ is neither in case 1 nor case 2 then $t_j+[h_1,h_M]\cap [t_\alpha,t_{n-1}]=\emptyset$ and it will not be considered in equation (\ref{doublesum}). Therefore the number of vertices that is considered in case 1 is at least $(n-\alpha)-(h_M-h_1)$ and the number of vertices that is considered in case 2 is at most  $2(h_M-h_1)$. Hence
\be
(n-\alpha)-(h_M-h_1)\leq \sum_{j=\alpha-h_M}^{n-1-h_1}\left(\sum_{i:h_i+j\in[\alpha,n-1]}N_i(t_j)\right)\leq n-\alpha+(h_M-h_1)\label{randombound}
\ee

Equation (\ref{randombound}) says that the number of vertices whose POWs have just been finished is within a constant error from the number $(n-\alpha)$. This enables us to define the process $Y(t_n)$ that starts at time $t_\alpha$ with the same value of $F(t_\alpha)$ and evolves distinctly as described in the following equations for $t_n>t_\alpha$:
\be
Y(t_n)&=&F(t_\alpha)+\sum_{j=\alpha}^{n-1} 1-\sum_{j=\alpha}^{n-1}\delta_j,\\
Y(t_n)-Y(t_{n-1})&=&1-\delta_{n-1},\label{evomartingale}
\ee 
then by equation (\ref{randombound}), $|F(t_n)-Y(t_n)|\leq 2(h_M-h_1)=:\Delta_{Y,F}$ for $n>\alpha$.

Next we wish to show that if $Y(t_k)>2h_M+3M\epsilon_{max}+2+\Delta_{Y,F}$, then $E[Y(t_{k+1})|\{Y(t_j):t_j\leq t_{k}\}]<0$ which means $Y(t_{k+1})$ is a supermartingale. We show this using the idea that if $Y(t_k)>2h_M+3M\epsilon_{max}+2+\Delta_{Y,F}$, then the number of free tips $F(t_k)$ is sufficiently large to make sure that the term $\delta_k$ used in equation (\ref{evomartingale}) has higher probability to be 2 than to be 1.

Let $\Sigma_i$ denote the $\sigma$-algebra generated by all the random variables $\Theta_k,\epsilon_k,X_k$ and $Y_k$ for $k=1,2,...,i$. By equation (\ref{doublesum}), we have $F(t_i)\in \Sigma_{i-1}$. Since a vertex $v\in \mathcal{F}(t_k)$ is not an element of $\mathcal{F}(t_k-\epsilon_k)$ only if the POW of vertex $v$ is completed and added to the set of free tips at some time $t\in [t_k-\epsilon_k+1,t_k]$. Also, note that $\sum_{i=1}^{M}{N_i(t_j-h_i)}\leq M$, i.e. there are at most $M$ vertices finishing POW and becoming free tips at each time. Therefore, suppose that $F(t_k)> 2 h_M+ 3M \epsilon_{max}+2=:a^*$, then 
\be
|\mathcal{F}(t_k-\epsilon_{k})\cap\mathcal{F}(t_k)|>2 h_M+2M\epsilon_{max}+2.\label{bound1}
\ee  
Similarly, a vertex $v'$ is an element in $\mathcal{F}(t_{k}-\epsilon_{k})\setminus\mathcal{F}(t_k)$ only if another vertex arrives and makes $v'$ become a pending tip by selecting $v'$ as parent at some time within the interval $[t_k-\epsilon_k+1,t_k]$. Since there is one vertex arrives at each step and it has at most two parents, 
\be
|\mathcal{F}(t_{k}-\epsilon_{k})\setminus\mathcal{F}(t_k)|\leq 2\epsilon_{max}<2M\epsilon_{max}.\label{bound2}
\ee
Recall that $W(t)$ records the number of pending tips which is always bounded by $2h_M$. Using equations (\ref{bound1}) and (\ref{bound2}) we get that $F(t_k)> 2 h_M+ 3M \epsilon_{max}+2=:a^*$ implies,
\be 
|\mathcal{F}(t_k-\epsilon_{k})\cap\mathcal{F}(t_k)|>2 h_M+2M\epsilon_{max}+2>|\mathcal{F}(t_{k}-\epsilon_{k})\setminus\mathcal{F}(t_k)|+W(t_{k}-\epsilon_k).
\ee
This implies that if $F(t_k)>a^*$, then the set $\mathcal{F}(t_k-\epsilon_{k})\cap\mathcal{F}(t_k)$ includes over half of the set of tips $\mathcal{L}(t_k-\epsilon_k)$. Hence if $F(t_k)>a^*$, $\delta_k$, the number of free tips in $\mathcal{F}(t_k)$ selected as parents at time $t_k$, has higher probability to be 2 than to be 0, which is equivalent to saying that for any event $D_{k-1}\in \Sigma_{k-1}$ and $D_{k-1}\subseteq \{F(t_k)> a^*\}$,
\bee
\P(\delta_{k}=2|D_{k-1})>\frac{1}{4}>\P(\delta_{k}=0|D_{k-1}) \quad \text{  and } \quad E(\delta_j|D_{k-1})>0
\eee
Therefore, if $Y(t_k)> a^*+\Delta_{Y,F}$, then $F(t_k)>a^*$ and hence the random variable $Y(t_{k+1})$ described in equation (\ref{evomartingale}) is a supermartingale with bounded difference satisfying Lemma \ref{randomwalk} with $p'=1/4$.

As a reminder, we want to show that given a constant $a>8 h_M-6h_1+ 3M \epsilon_{max} +2$, if $F(t_\alpha)>a$ at some time $t_\alpha$ then $F(t_n\leq a)$ for some $n\in(\alpha,\infty)$ almost surely. Recall that $\Delta_{Y,F}:=2(h_M-h_1)$ and $a^*:=2 h_M+ 3M \epsilon_{max}+2$. The condition $a>8 h_M-6h_1+3M \epsilon_{max} +2=a^*+3\Delta_{Y,F}$ implies that $Y(t_\alpha)=F(t_\alpha)> a^*+3\Delta_{Y,F}$. Therefore, by Lemma \ref{randomwalk}, the process $Y(t_n)$ starting at time $t_\alpha$ with value $F(t_\alpha)>a$ will become less than or equal to $a^*+2\Delta_{Y,F}$, which is less than $a$, within finite time and $Y(t_n)> a^*+\Delta_{Y,F}$ is true before $Y(t_n)$ gets less or equal to $a^*+2\Delta_{Y,F}$ to make sure $Y(t_n)$ remains as a supermartingale. Thus $F(t_n)\leq Y(t_n)+\Delta_{Y,F}=a^*+3\Delta_{Y,F}\leq a$ within finite time after $t_\alpha$ which implies $F(t_n)\leq a$ infinitely often. This with $L(t_n)=F(t_n)+W(t_n)$ and $W(t_n)\leq 2h_M$ together conclude Lemma \ref{L<b}. 

\end{proof}

Lemma \ref{L<b} shows that the number of tips is bounded above by a constant $b\in(10 h_M-6h_1+3M\epsilon_{max} +2,\infty)$ infinitely often, which we use here on. Next, for each $i\in \mathbb{Z}^*$ with $\mathcal{A}_i=\{L(t_i)\leq b\}$ occurs, we construct an event $\mathcal{B}_i$ within an interval with fixed length so that all rays are joined together, and we call these constructed events the \textit{bottleneck events}. If the sequence of events $\{\mathcal{B}_i\}_{i=1}^\infty$ happens infinitely often, it can be shown that the limiting DAG is one-ended. 

Given $i$ such that $\mathcal{A}_i=\{L(t_i)\leq b\}$ occurs, we now construct the event $\mathcal{B}_i$ in three main steps, A, B and C through time intervals $[t_i,t_i+\kappa_A)$, $[t_i+\kappa_A,t_i+\kappa_B)$ and $[t_i+\kappa_B,t_i+\kappa_C]$ respectively as in Figure \ref{interval}, where the constants $\kappa_A,\kappa_B,\kappa_C$ will be defined in their corresponding steps. Here we provide a summary of the construction of bottleneck events:
\renewcommand{\labelenumi}{\Alph{enumi})}
\begin{enumerate}
    \item Tidying phase: Conditioned on $\{L(t_i)\leq b\}$, we identify a finite $\kappa_A$ and an event $A_i$ within interval $[t_i,t_i+\kappa_A)$ such that at time $t_i+\kappa_A$ all unfinished POW has duration $h_M$ and the number of free tips $F(t_i+\kappa_A)>3(h_M+\epsilon_{min})$. It will be shown that any $\kappa_A>1+\max\{2h_M,3h_M(h_M+\epsilon_{min})/(h_M-1)\}$ is sufficient for our goal. The identified event $A_i$ is described by equations (\ref{stepA1}) and (\ref{stepA}) while properties of step A are given in Proposition \ref{conditionA} and Corollary \ref{FA>}.
    \item Preparation phase: Following the event in step A, we show that for $\kappa_B=\kappa_A+\epsilon_{min}+1$, it is possible to construct an event $B_i$ within the interval $[t_i+\kappa_A,t_i+\kappa_B)$ such that a subset with size $2(h_M+\epsilon_{min})$ of $\mathcal{F}(t_i+\kappa_A)$ remain unselected to be parents by the time $t_i+\kappa_B$. The identified event $B_i$ is desribed by equations (\ref{stepBrule1}) and (\ref{stepBrule}) while properties of step B are given in Lemmas \ref{conditionB} and \ref{temp2}.
    \item Interchange phase: Following the events in step A and B, we introduce a constant $\kappa_C$, whose value will be given in equation (\ref{kappac}), and a construction of event $C_i$ within the interval $[t_i+\kappa_B,t_i+\kappa_C]$ such that any vertex that arrive after time $t_i+\kappa_C$ has a path to any vertex in $\mathcal{V}(t_i)$. The constructed event $C_i$ is described by equations (\ref{stepcrule1}), (\ref{Xselect}) and (\ref{Yselect}). Properties of the event introduced in this part are given in Lemmas \ref{conditionC} and \ref{temp3}.
\end{enumerate}

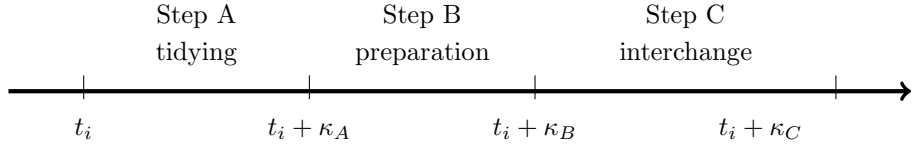
\begin{figure}[H]
\begin{tikzpicture}
    \draw[ultra thick, ->] (5,0) -- (17,0);
    
    \foreach \x in {6,9,12,16}
    \draw (\x cm,6pt) -- (\x cm,-3pt);
    
    \node at (4,0) {\quad};
    \node at (6,-0.5) {$t_i$}; 
    \node at (9,-0.5) {$t_i+\kappa_A$}; 
    \node at (12,-0.5) {$t_i+\kappa_B$}; 
    \node at (15,-0.5) {$t_i+\kappa_C$}; 
    \node at (7.5,0.5) {tidying}; 
    \node at (10.5,0.5) {preparation}; 
    \node at (14,0.5) {interchange}; 
    \node at (7.5,1) {Step A}; 
    \node at (10.5,1) {Step B}; 
    \node at (14,1) {Step C}; 
    
\end{tikzpicture}
\caption{Step A and B are tidying and preparation phases which provide sufficient conditions of the DAG for the construction in Step C. Step C identifies a special event that join all rays together.}\label{interval}
\end{figure}

\textbf{Step A (Tidying Phase)}: Let $\kappa_A$ be a constant whose value will be identified in terms of parameters $h_M$ and $\epsilon_{min}$ in Corollary \ref{FA>}, where $h_M$ is the maximun duration of POW and $\epsilon_{min}$ is the shortest time an arriving vertex can go back for observing the DAG and selecting parents. Recall that $\mathcal{L}(t)$ denotes the set of tips and $\mathcal{F}(t)$ denotes the set of free tips. We define $A_i$ to be the event that each vertex $j\in[t_i,t_i+\kappa_A)$ follows the following conditions:
\begin{align}
\epsilon_j&=\epsilon_{min}\quad \& \quad \Theta_j=h_M>1\label{stepA1}\\
X_j&=Y_j\in\left\{
\begin{array}{cc}
 \mathcal{L}(t_j-\epsilon_{min})\setminus \mathcal{F}(t_j) &\quad \text{if }\mathcal{L}(t_j-\epsilon_{min})\nsubseteq \mathcal{F}(t_j)\\
\mathcal{L}(t_j-\epsilon_{min}) &\quad \text{if }\mathcal{L}(t_j-\epsilon_{min})\subseteq \mathcal{F}(t_j)
\end{array}
\right.\label{stepA}
\end{align}
which means each vertex $j$ selects parents within the set of tips $\mathcal{L}(t_j-\epsilon_{min})$ such that $X_j=Y_j$ and $X_j,Y_j\notin \mathcal{F}(t_j)$ whenever $\mathcal{L}(t_j-\epsilon_{min})\nsubseteq \mathcal{F}(t_j)$, otherwise vertex $j$ selects any vertex in $\mathcal{L}(t_j-\epsilon_{min})$ as its only parent. Recall that $\Sigma_i$ denotes the $\sigma$-algebra generated by all the random variables $\Theta_k,\epsilon_k,X_k$ and $Y_k$ for $k=1,2,...,i$. By equations (\ref{evolutiondistribution}), (\ref{stepA1}) and (\ref{stepA}), : 
\begin{prop}\label{conditionA}
 For any event $D_{i-1}\in \Sigma_{i-1}$ and $D_{i-1}\subseteq \{L(t_i)\leq b\}$, we have $P(A_i|D_{i-1})>0$.  
\end{prop}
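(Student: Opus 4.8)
The plan is to write $A_i$ as a finite intersection of single-vertex events and then peel the conditioning off one vertex at a time, using (\ref{evolutiondistribution}) to see that each peeled-off step contributes a strictly positive factor. For $j\in\{t_i,t_i+1,\dots,t_i+\kappa_A-1\}$ let $E_j$ be the event that vertex $j$ obeys both (\ref{stepA1}) and (\ref{stepA}), so that $A_i=\bigcap_{j=t_i}^{t_i+\kappa_A-1}E_j$; note $h_M\ge 2>1$ since $M\ge 2$, so the requirement $\Theta_j=h_M>1$ is consistent. The variables $\epsilon_j,\Theta_j,X_j,Y_j$ are $\Sigma_j$-measurable, while $\mathcal{G}(t_j)$ and $\mathcal{G}'(t_j)$ are $\Sigma_{j-1}$-measurable by (\ref{evov})--(\ref{evoe}) and the update rules for $\mathcal{V}',\mathcal{E}'$ (they involve only $\Theta_k,X_k,Y_k$ with $k<j$); hence $\mathcal{L}(t_j),\mathcal{W}(t_j),\mathcal{F}(t_j)$ and $\mathcal{L}(t_j-\epsilon_{min})$ are all $\Sigma_{j-1}$-measurable, so each $E_j\in\Sigma_j$ and $D_{i-1}\cap E_{t_i}\cap\cdots\cap E_{j-1}\in\Sigma_{j-1}$. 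Assuming $\P(D_{i-1})>0$ (otherwise the claimed conditional probability is vacuous), it then suffices, via the chain rule
\[
\P\bigl(A_i\mid D_{i-1}\bigr)=\prod_{j=t_i}^{t_i+\kappa_A-1}\P\Bigl(E_j\mid D_{i-1}\cap\bigcap_{j'<j}E_{j'}\Bigr),
\]
to show that each conditioning event has positive probability and each factor is positive; both follow by induction on $j$ from the single-step estimate described next.

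The crux is the following: if $G\in\Sigma_{j-1}$ has $\P(G)>0$ and $G\subseteq D_{i-1}\cap\bigcap_{j'<j}E_{j'}$, then $\P(E_j\mid G)>0$. On $G$ the sets $\mathcal{L}(t_j-\epsilon_{min})$ and $\mathcal{F}(t_j)$ are fixed, and the parent set prescribed by (\ref{stepA}) is nonempty in both of its cases: if $\mathcal{L}(t_j-\epsilon_{min})\nsubseteq\mathcal{F}(t_j)$ this is the very hypothesis of that case, and if $\mathcal{L}(t_j-\epsilon_{min})\subseteq\mathcal{F}(t_j)$ it holds because every finite DAG has a vertex of in-degree $0$, so $\mathcal{L}(t)\ne\emptyset$ for all $t$ (with $\mathcal{L}(t_0)=\{0\}$ under the convention $\mathcal{G}(t_n)=\mathcal{G}(t_0)$ for $n<0$). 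Pick any $v$ in this set. Applying (\ref{evolutiondistribution}) to vertex $j$ with POW type $M$ and $\epsilon$-value $\epsilon_{min}$, and taking $Y_j=X_j=v$,
\[
\P\bigl(\Theta_j=h_M,\ \epsilon_j=\epsilon_{min},\ X_j=Y_j=v \mid \Sigma_{j-1}\bigr)=\frac{p_{\Theta,M}\,p_{\epsilon,\epsilon_{min}}}{L(t_j-\epsilon_{min})^2}\,1_{\{v\in\mathcal{L}(t_j-\epsilon_{min})\}},
\]
which is strictly positive on $G$ because $v\in\mathcal{L}(t_j-\epsilon_{min})$ and $L(t_j-\epsilon_{min})$ is a finite positive integer. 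Since this outcome forces $E_j$, we get $\P(E_j\mid\Sigma_{j-1})>0$ on $G$, hence $\P(E_j\cap G)=\E\bigl[1_G\,\P(E_j\mid\Sigma_{j-1})\bigr]>0$, and therefore $\P(E_j\mid G)>0$.

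Running this bound inductively, with base event $G=D_{i-1}$ and then $G=D_{i-1}\cap E_{t_i}\cap\cdots\cap E_j$ at the $j$-th step, shows at once that every partial intersection has positive probability and every chain-rule factor is positive, whence $\P(A_i\mid D_{i-1})>0$. I expect no genuine obstacle here: the only points needing care are the measurability bookkeeping (which step-$j$ objects are $\Sigma_{j-1}$- versus $\Sigma_j$-measurable) and the nonemptiness of the prescribed parent set. Note that, because only strict positivity is claimed in this proposition and not a lower bound uniform in $i$, there is no need to control the magnitude of $L(t_j-\epsilon_{min})$ for $j$ near $t_i$ (where $t_j-\epsilon_{min}$ may precede $t_i$ and hence is not governed by the event $\{L(t_i)\le b\}$); such uniform control is only required in the later steps of the construction.
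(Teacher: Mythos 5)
Your proof is correct and follows the same route the paper intends: the paper treats Proposition \ref{conditionA} as immediate from equations (\ref{evolutiondistribution}), (\ref{stepA1}) and (\ref{stepA}), and your argument simply fills in the implicit details (chain rule over the $\kappa_A$ arrivals, $\Sigma_{j-1}$-measurability of $\mathcal{L}(t_j-\epsilon_{min})$ and $\mathcal{F}(t_j)$, and nonemptiness of the prescribed parent set in both cases of (\ref{stepA})). No gaps worth flagging.
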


The purpose of step A is so that the DAG develops up to the time $t_i+\kappa_A$ such that all vertex with unfinished POW have maximum duration of POW $h_M$ and the number of free tips $F(t_i+\kappa_A)$ exceeds a threshold $3 (h_M+\epsilon_{min})$ in order to make sure the existence of the set $\mathcal{F}_i^B$ defined by equation (\ref{unselectedset}) in step B and that the set has positive probability to remain unselected as parents by the end of step B. Lemma \ref{temp1} establishes a property of the event $A_i$ which will be used to find the value of $\kappa_A$.
\begin{lem}\label{temp1}
Suppose $\mathcal{A}_i=\{L(t_i)\leq b\}$ occurs and all vertices that arrive within $[t_i,t_i+\kappa_A)$ with $\kappa_A>2h_M+2$ satisfy equations (\ref{stepA1}) and (\ref{stepA}). For $t_j\in(t_i+h_M,t_i+\kappa_A-h_M)$, if $F(t_{j+1})-F(t_j)=0$, then $F(t_{j+k})-F(t_{j+k-1})=1$ for $k=2,...,h_M$, i.e. the number of free tips will keep increasing by 1 for the next $h_M-1$ steps. 
\end{lem}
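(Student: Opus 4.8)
The plan is to reduce the claim to the evolution equation (\ref{evolution}) together with the two restrictions the event $A_i$ puts on every vertex born in $[t_i,t_i+\kappa_A)$: its POW duration is exactly $h_M$, and it uses a single parent $X_j=Y_j$ selected according to (\ref{stepA}). First I would check that, because $\kappa_A>2h_M+2$ and $t_j\in(t_i+h_M,t_i+\kappa_A-h_M)$, for every index $m\in\{j,j+1,\ldots,j+h_M-1\}$ all of $m-h_1,\ldots,m-h_M$ lie strictly inside $[t_i,t_i+\kappa_A)$; hence each of those vertices has POW duration $h_M$, so only the one born at $t_m-h_M$ can complete its POW at $t_m$ and the ``arrivals-just-completed'' term $\sum_{i=1}^{M}N_i(t_m-h_i)$ in (\ref{evolution}) equals $1$. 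Thus $F(t_{m+1})-F(t_m)=1-\delta_m$ for all such $m$, and since $X_m=Y_m$ forces $\delta_m\in\{0,1\}$, the lemma becomes: if $\delta_j=1$ then $\delta_{j+1}=\cdots=\delta_{j+h_M-1}=0$ (the case $k=1$ being exactly the hypothesis).

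Next I would unwind $\delta_j=1$: by (\ref{stepA}) this forces vertex $j$ into the branch $\mathcal{L}(t_j-\epsilon_{min})\subseteq\mathcal{F}(t_j)$ and makes its single parent $v^\star:=X_j=Y_j$ a free tip at time $t_j$. The crux is to track the status of $v^\star$. Since $v^\star\in\mathcal{F}(t_j)$, no vertex born before $j$ ever selected it, so the first solid edge into $v^\star$ is the one created when vertex $j$ finishes its POW, i.e. at time $t_j+h_M$ (every later arrival in the window also has duration $h_M$ and so completes strictly later); combined with $v^\star\in\mathcal{L}(t_j-\epsilon_{min})\cap\mathcal{L}(t_j)$ this gives $v^\star\in\mathcal{L}(t_n)$ for all $n$ with $t_j-\epsilon_{min}\le n\le t_j+h_M$. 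On the other hand, once vertex $j$ selects it $v^\star$ becomes pending, and a tip never returns to $\mathcal{F}$, so $v^\star\notin\mathcal{F}(t_m)$ for every $m\ge j+1$.

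Finally I would combine the two facts. For $m\in\{j+1,\ldots,j+h_M-1\}$, the event $A_i$ gives $\epsilon_m=\epsilon_{min}\ge 1$, so the observation time $t_m-\epsilon_{min}=m-\epsilon_{min}$ lies in $[t_j-\epsilon_{min},\,t_j+h_M-2]\subseteq[t_j-\epsilon_{min},\,t_j+h_M]$; hence $v^\star\in\mathcal{L}(t_m-\epsilon_{min})$, and with $v^\star\notin\mathcal{F}(t_m)$ this yields $\mathcal{L}(t_m-\epsilon_{min})\nsubseteq\mathcal{F}(t_m)$. Rule (\ref{stepA}) then puts vertex $m$ in the first branch, forcing $X_m=Y_m\notin\mathcal{F}(t_m)$, i.e. $\delta_m=0$. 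Substituting into $F(t_{m+1})-F(t_m)=1-\delta_m$ and reindexing $m=j+k-1$ gives $F(t_{j+k})-F(t_{j+k-1})=1$ for $k=2,\ldots,h_M$.

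The substantive work is entirely bookkeeping with the update rules (\ref{evov})--(\ref{evoe}) and the definitions of $\mathcal{L},\mathcal{F},\mathcal{W}$: one must handle the one-step offsets in those recursions carefully when pinning down exactly when $v^\star$ enters $\mathcal{L}$, leaves $\mathcal{L}$, and leaves $\mathcal{F}$. The one genuinely load-bearing observation is that $\epsilon_{min}\ge 1$ is precisely what keeps $v^\star$ visible to vertex $m$ as a non-free tip for $m$ up to $j+h_M-1$ but not necessarily beyond, which is why the conclusion terminates at $k=h_M$.
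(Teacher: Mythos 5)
Your proposal is correct and follows essentially the same route as the paper's proof: restrict to the window where every unfinished POW has duration $h_M$ so that the increment is $1-\delta_m$ with $\delta_m\in\{0,1\}$, then track the free tip selected at time $t_j$, which stays a non-free tip visible at the observation times $t_m-\epsilon_{min}$ for $m=j+1,\dots,j+h_M-1$ and forces the first branch of (\ref{stepA}), hence $\delta_m=0$. If anything, your phrasing is slightly more careful than the paper's, since you use $v^\star$ only as a witness that $\mathcal{L}(t_m-\epsilon_{min})\nsubseteq\mathcal{F}(t_m)$ rather than asserting that $v^\star$ itself is re-selected.
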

\begin{proof}
Recall $\mathcal{V}'(t_k)$ denotes the set of vertices with unfinished POW at time $t$. Notice that if $\kappa_A>h_M$, then for any vertex $j\in \mathcal{V}'(t_k)$ with $k\in (i+h_M,i+\kappa_A]$, vertex $j$ must arrived and started its POW after time $t_i$ and hence $\Theta_j=h_M$. Hence for $t_n\in(t_i+h_M+1,t_i+\kappa_A)$, the number of vertices that complete their POWs at each step denoted by $\sum_{k=1}^M N_k(t_n-h_k)$ has only one term $N_M(t_n-h_M)$ to be 1 and therefore $\sum_{k=1}^M N_k(t_n-h_k)=1$. Together with equations (\ref{definedelta}) and (\ref{evolution}), we have that if $t_n\in(t_i+h_M+1,t_i+\kappa_A)$ and equations $(\ref{stepA1})$ and $(\ref{stepA})$ are true, then the increment of number of free tips will be either 1 or 0 depending on which condition of equation (\ref{stepA}) is satisfied, i.e.
\begin{equation*} 
F(t_{n+1})-F(t_n)=\left\{
\begin{array}{cc}
1 &\text{if } \mathcal{L}(t_n-\epsilon_{min})\nsubseteq \mathcal{F}(t_n)\\
0 &\text{if } \mathcal{L}(t_n-\epsilon_{min})\subseteq \mathcal{F}(t_n)
\end{array}\right..
\end{equation*}
Suppose $F(t_{j+1})-F(t_j)=0$ for some $t_j\in(t_i+h_M,t_i+\kappa_A-h_M)$ which means a tip $v\in \mathcal{F}(t_j)$ is selected as parent because $\mathcal{L}(t_j-\epsilon_{min})\subseteq \mathcal{F}(t_j)$, and hence $v$ becomes a pending tip at $t_{j+1}$. Since $v\in\mathcal{F}(t_j)$ and $v\in\mathcal{L}(t_j-\epsilon_{min})$, $v\in \mathcal{F}(t_k)\subseteq\mathcal{L}(t_k)$ for $k=j, j-1,...,j-\epsilon_{min}$. Since vertex $v$ is first selected as parent at $t_j$ by vertex $j$ which has POW duration $h_M$, then for $k=j+1,...,j+h_M$, $v\in\mathcal{W}(t_k)$ and hence $v\notin\mathcal{F}(t_k)$. Therefore, for $k=2,...,h_M$, $v$ will be selected as parent by the arriving vertex $j+k-1$ because of equation (\ref{stepA}) and $F(t_{j+k})-F(t_{j+k-1})=1$.
\end{proof}
Within event $A_i$ that takes place within $[t_i,t_i+\kappa_A)$, Lemma \ref{temp1} says $F(t_n)$ is a non decreasing sequence for $t_n>t_i+h_M$ which can only maintain the same value for at most one step. Hence we can make the number of free tips exceed any number by extending the duration of step A, which is described in the following corollary.
\begin{cor}\label{FA>}
Suppose $L(t_i)\leq b$ and $A_i$ defined by equations (\ref{stepA1}) and (\ref{stepA}) occur. For any constant $\kappa_A>1+\max\{2h_M,3h_M(h_M+\epsilon_{min})/(h_M-1)\}$, we have $F(t_i+\kappa_A)-F(t_i+h_M)> 3 (h_M+\epsilon_{min})$ and further $F(t_i+\kappa_A)> 3 (h_M+\epsilon_{min})$.
\end{cor}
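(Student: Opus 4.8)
The corollary is an essentially direct consequence of the increment structure recorded in Lemma~\ref{temp1}; the plan is to convert that qualitative statement about the steps of $F$ into a lower bound on $F(t_i+\kappa_A)-F(t_i+h_M)$ that is linear in $\kappa_A$. First I would check that the standing hypotheses here contain those of Lemma~\ref{temp1}: in particular $\kappa_A>2h_M+2$, because $M\ge 2$ forces $h_M\ge 2$, and a short estimate then shows $1+3h_M(h_M+\epsilon_{min})/(h_M-1)>2h_M+2$. From the proof of Lemma~\ref{temp1} we then have, on the relevant time window: (i) for $n$ with $t_i+h_M<t_n<t_i+\kappa_A$ the increment $F(t_{n+1})-F(t_n)$ lies in $\{0,1\}$, so $F$ is non-decreasing there; and (ii) for $n$ with $t_i+h_M<t_n<t_i+\kappa_A-h_M$, a $0$-increment at step $n$ is followed by $+1$-increments at steps $n+1,\dots,n+h_M-1$. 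Fact (ii) has the immediate consequence that among any $h_M$ consecutive increments whose indices lie in the window of (ii) at most one equals $0$: two zeros at indices $n_1<n_2$ with $n_2-n_1\le h_M-1$ would contradict the run of $+1$'s forced by the zero at $n_1$.

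Next I would count the $+1$ steps. We may take $\kappa_A\in\mathbb{N}$ (otherwise replace it by $\lfloor\kappa_A\rfloor$ throughout). Let $W$ be the set of increment indices $n$ with $t_i+h_M<t_n<t_i+\kappa_A-h_M$, so $|W|\ge\kappa_A-2h_M-1$. By the consequence of (ii) the number of $n\in W$ with $F(t_{n+1})-F(t_n)=0$ is at most $\lceil|W|/h_M\rceil\le|W|/h_M+1$, hence at least $|W|(h_M-1)/h_M-1$ of the increments indexed by $W$ equal $1$. Telescoping all increments from $t_i+h_M$ to $t_i+\kappa_A$, using (i) for the non-negativity of the increments near the right end and the crude bound $F(t_{n+1})-F(t_n)\ge-2$ (from the evolution equation~(\ref{evolution}) and $\delta_n\le 2$) for the handful of increments near the left end, one arrives at a bound of the form $F(t_i+\kappa_A)-F(t_i+h_M)\ge\frac{h_M-1}{h_M}(\kappa_A-2h_M-1)-c$ for a small absolute constant $c$. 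Substituting the hypothesis $\kappa_A>1+3h_M(h_M+\epsilon_{min})/(h_M-1)$ and rearranging makes the right-hand side exceed $3(h_M+\epsilon_{min})$, which is the first assertion; the second, $F(t_i+\kappa_A)>3(h_M+\epsilon_{min})$, is then immediate since $F(t_i+h_M)\ge 0$ as a cardinality.

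There is no conceptual obstacle — all the real content is already in Lemma~\ref{temp1} — so the only thing that needs care is the bookkeeping at the two ends of the counting window: exactly which increments are guaranteed to lie in $\{0,1\}$, which ones enjoy the \emph{isolated-zero} property of (ii), and how the few uncontrolled boundary increments are absorbed. A naive tally of those $O(h_M)$ boundary terms is in fact a little too lossy to reproduce the stated threshold $1+3h_M(h_M+\epsilon_{min})/(h_M-1)$ exactly, so to match the paper's constant one would either run the count on a slightly larger window or account for the boundary increments precisely; the qualitative conclusion needed downstream — that $\kappa_A$ of this order drives $F(t_i+\kappa_A)$ past the fixed threshold $3(h_M+\epsilon_{min})$, which is what Step~B then uses — is robust to the exact accounting.
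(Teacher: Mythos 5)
Your proposal follows essentially the same route as the paper, which gives no separate argument for Corollary \ref{FA>} beyond the remark preceding it that Lemma \ref{temp1} makes $F(t_n)$ non-decreasing after $t_i+h_M$ with zero-increments that are immediately followed by $h_M-1$ forced $+1$ steps, so that lengthening step A drives $F$ past any fixed threshold; your telescoping count is exactly that idea made explicit. Your caveat about the boundary bookkeeping is fair: the paper does not carry out the finer count either, and under the crudest worst-case tally the stated lower bound on $\kappa_A$ seems to need an extra additive term of order $h_M$ to certify the exact inequality $F(t_i+\kappa_A)-F(t_i+h_M)>3(h_M+\epsilon_{min})$, but since only the existence of some finite admissible $\kappa_A$ is used in Step B and beyond, this does not affect the downstream argument.
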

We will assume the selectin of a $\kappa_A$ that satisfies Corollary \ref{FA>} from here on.

\textbf{Step B (Preparation Phase)}: Given step A has occurred, for $\kappa_B:=\kappa_A+\epsilon_{min}+1$, we work on the vertices that arrive within the interval $[t_i+\kappa_A,t_i+\kappa_B)$. The goal of step B is to define a set $\mathcal{F}_i^{B}$ satisfying the requirements described in equation (\ref{unselectedset}) and show that the event $B_i$ defined in equations (\ref{stepBrule1}) and (\ref{stepBrule}), where all the vertices in $\mathcal{F}_i^{B}$ remain unselected until the time $t_i+\kappa_B$, has positive probability. Since $F(t_i+\kappa_A)> 3 (h_M+\epsilon_{min})$, we can define a subset $\mathcal{F}_i^{B}\subset \mathcal{F}(t_i+\kappa_A)\subset\mathbb{Z}$ such that
\be
|\mathcal{F}_i^{B} |=2(h_M+\epsilon_{min}) \quad\quad \&\quad\quad \max \mathcal{F}_i^{B}<\min (\mathcal{F}(t_i+\kappa_A)\setminus\mathcal{F}_i^{B}).\label{unselectedset}
\ee 
 We now define $B_i$ to be the event where any vertex $j$ arrives during the interval $[t_i+\kappa_A,t_i+\kappa_B)$ satisfies the following conditions:
\begin{align} 
\Theta_j&=h_M \quad \&\quad \epsilon_j=\epsilon_{min},\label{stepBrule1}\\
X_j&=Y_j\in\{v\in\mathcal{L}(t_j-\epsilon_{min})|v\notin \mathcal{F}_i^{B}\}.\label{stepBrule}
\end{align}
Recall that $\Sigma_i$ denotes the $\sigma$-algebra generated by all the random variables $\Theta_k,\epsilon_k,X_k$ and $Y_k$ for $k=1,2,...,i$. The conditional probability of event $B_i$ is given as follows:
\begin{lem}\label{conditionB}
For any event $D_{i-1}\in \Sigma_{i-1}$ and $D_{i-1}\subseteq \{L(t_i)\leq b\}$, we have $P(B_i|D_{i-1}\cap A_i)>0$.
\end{lem}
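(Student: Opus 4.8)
The plan is to deduce the statement from the conditional independence built into~(\ref{evolutiondistribution}) together with one purely combinatorial fact about the state of the DAG left by step~A. First note that, since $\kappa_B=\kappa_A+\epsilon_{min}+1$, the interval $[t_i+\kappa_A,t_i+\kappa_B)$ contains exactly the $\epsilon_{min}+1$ vertices $j\in\{t_i+\kappa_A,t_i+\kappa_A+1,\dots,t_i+\kappa_A+\epsilon_{min}\}$, and each such $j$ picks its parents from $\mathcal{L}(t_j-\epsilon_{min})$ with the observation time $t_j-\epsilon_{min}$ ranging over $[\,t_i+\kappa_A-\epsilon_{min},\,t_i+\kappa_A\,]$. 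Every such observation time is at or before $t_i+\kappa_A$, so $\mathcal{G}(t_j-\epsilon_{min})$, and hence $\mathcal{L}(t_j-\epsilon_{min})$ and the set $\mathcal{F}_i^{B}\subseteq\mathcal{F}(t_i+\kappa_A)$, are determined entirely by the history of step~A; in particular they are $\Sigma_{j-1}$-measurable and unaffected by the choices made during step~B.

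The combinatorial fact I would prove is that, on $\{L(t_i)\le b\}\cap A_i$, one has $\mathcal{L}(t_j-\epsilon_{min})\setminus\mathcal{F}_i^{B}\neq\emptyset$ for each of these vertices $j$. For the choice of $\kappa_A$ from Corollary~\ref{FA>}, all the relevant time indices lie in the step-A range $(t_i+h_M+1,t_i+\kappa_A)$ on which the proof of Lemma~\ref{temp1} gives $F(t_{n+1})-F(t_n)\in\{0,1\}$; hence $F$ changes by at most $1$ per step there and $F(t)\ge F(t_i+\kappa_A)-\epsilon_{min}$ for every $t$ in the window $[t_i+\kappa_A-\epsilon_{min},t_i+\kappa_A]$. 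Combining this with $F(t_i+\kappa_A)>3(h_M+\epsilon_{min})$ from Corollary~\ref{FA>} yields $F(t)>3h_M+2\epsilon_{min}$, while $|\mathcal{F}_i^{B}|=2(h_M+\epsilon_{min})$ by~(\ref{unselectedset}); therefore $|\mathcal{F}(t)\setminus\mathcal{F}_i^{B}|\ge F(t)-|\mathcal{F}_i^{B}|>h_M\ge1$, and since $\mathcal{F}(t)\subseteq\mathcal{L}(t)$ the fact follows.

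To finish I would process the vertices $j=t_i+\kappa_A,\dots,t_i+\kappa_A+\epsilon_{min}$ one at a time. For such a $j$ put $D_{j-1}:=D_{i-1}\cap A_i$ intersected with the requirements~(\ref{stepBrule1})--(\ref{stepBrule}) imposed on every vertex of the interval strictly before $j$; this $D_{j-1}$ is $\Sigma_{j-1}$-measurable and contained in $\{L(t_i)\le b\}$. Since $\mathcal{L}(t_j-\epsilon_{min})$ and $\mathcal{F}_i^{B}$ are $\Sigma_{j-1}$-measurable, (\ref{evolutiondistribution}) gives that the conditional probability given $D_{j-1}$ that vertex $j$ obeys~(\ref{stepBrule1}) and~(\ref{stepBrule}) equals $p_{\Theta,M}\,p_{\epsilon,\epsilon_{min}}\,|\mathcal{L}(t_j-\epsilon_{min})\setminus\mathcal{F}_i^{B}|\,/\,L(t_j-\epsilon_{min})^2$, which is strictly positive by the combinatorial fact and the positivity of $p_{\Theta,M}$ and $p_{\epsilon,\epsilon_{min}}$. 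Multiplying these finitely many positive conditional probabilities (telescoping the conditioning) gives $P(B_i|D_{i-1}\cap A_i)>0$. The one genuinely delicate point is the combinatorial fact: one must verify that the step-B observation times never extend into step~B itself, so that the step-A increment bound for $F$ is available and none of $\mathcal{F}_i^{B}$ has yet been consumed, and then check that the surplus $F(t)>3h_M+2\epsilon_{min}$ over $|\mathcal{F}_i^{B}|=2(h_M+\epsilon_{min})$ really does leave an admissible tip to select; everything else is a routine bookkeeping use of conditional independence.
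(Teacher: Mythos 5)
Your proof is correct and follows essentially the same route as the paper: using (\ref{evolutiondistribution}), (\ref{stepBrule1})--(\ref{stepBrule}) and the $\Sigma_{j-1}$-measurability of the relevant tip sets, it reduces the claim to showing $\mathcal{L}(t_j-\epsilon_{min})\setminus\mathcal{F}_i^{B}\neq\emptyset$ for each of the $\epsilon_{min}+1$ step-B arrivals, and obtains this from the surplus $F(t_i+\kappa_A)>3(h_M+\epsilon_{min})$ of Corollary \ref{FA>} combined with the fact that only one POW completes per step in the tail of step A. The only (harmless) difference is bookkeeping: you lower-bound $F(t_j-\epsilon_{min})\geq F(t_i+\kappa_A)-\epsilon_{min}$ via the step-A increment bound from Lemma \ref{temp1}, whereas the paper counts how many elements of $\mathcal{F}(t_i+\kappa_A)\setminus\mathcal{F}_i^{B}$ could have become free tips during the lookback window; both arguments yield the required nonempty admissible set.
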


\begin{proof}
By equations (\ref{evolutiondistribution}), (\ref{stepBrule1}) and (\ref{stepBrule}), it suffices to show that there exists a vertex $v\in\mathcal{L}(t_j-\epsilon_{min})$ such that $v\notin\mathcal{F}_i^{B}$ for all $j\in [t_i+\kappa_A,t_i+\kappa)$. For $j\in[i+\kappa_A,i+\kappa_B)=[i+\kappa_A,i+\kappa_A+\epsilon_{min}+1)$, we are interested in $\mathcal{L}(t_j-\epsilon_{min})$ where
\bee
t_j-\epsilon_{min}\in[t_i+\kappa_A-\epsilon_{min},t_i+\kappa_A].
\eee 
By Corollary \ref{FA>} and equation (\ref{unselectedset}), $|\mathcal{F}_i^{B}|=2(h_M+\epsilon_{min})<3(h_M+\epsilon_{min})<F(t_i+\kappa_A)$, hence $|\mathcal{F}(t_i+\kappa_A)\setminus\mathcal{F}_i^{B}|>h_M+\epsilon_{min}$. The case where $t_j-\epsilon_{min}=t_i+\kappa_A$ is immediate because $|\mathcal{F}(t_i+\kappa_A)\setminus\mathcal{F}_i^{B}|>0$, hence we can only consider $t_j-\epsilon_{min}\in[t_i+\kappa_A-\epsilon_{min},t_i+\kappa_A)$. For any $v'\in \mathcal{F}(t_i+\kappa_A)\setminus\mathcal{F}_i^{B}\subset\mathcal{F}(t_i+\kappa_A)$, suppose the vertex $v'$ was added to the set of free tips at some time $t'$ because the corresponding POW is finished, then $v'\in \mathcal{L}(t)$ for any $t\in [t', t_i+\kappa_A]$. Hence $v'\notin\mathcal{L}(t_j-\epsilon_{min})$ only if the POW of the vertex $v'$ was finished and added to the set of free tips at some time within $[t_j-\epsilon_{min}+1,t_i+\kappa_A]$. Together with the fact that only one POW can be finished at each step by equations (\ref{stepA1}) and (\ref{stepBrule1}), we have 
\bee 
|(\mathcal{F}(t_i+\kappa_A)\setminus\mathcal{F}_i^{B})\cap\mathcal{L}(t_j-\epsilon_{min})|\geq (h_M+\epsilon_{min})-[t_i+\kappa_A-(t_j-\epsilon_{min})]\geq h_M>0.
\eee 
Therefore we can choose $v\in(\mathcal{F}(t_i+\kappa_A)\setminus\mathcal{F}_i^{B})\cap\mathcal{L}(t_j-\epsilon_{min})$ and Lemma (\ref{conditionB}) follows.
\end{proof}

Now that we have established that the event $B_i$ has positive conditional probability, in order to introduce step C we first discuss the identification of a set of vertices whose elements have already arrived by the time $t_i+\kappa_B$. As a reminder, $\mathcal{V}'(t)$ is the set of vertices whose POWs have not been finished by time $t$ and $\mathcal{F}(t)$ is the set of free tips whose POWs have been finished but have not yet been selected as parents at time $t$. Both $\mathcal{F}(t)$ and $\mathcal{V}'(t)$ are subsets of $\mathbb{N}$. Recall that a vertex which arrives at time $k$ is labeled by $k$. We now give each element inside the set $\mathcal{F}(t_i+\kappa_B)\cup\mathcal{V}'(t_i+\kappa_B)$ additional labels which will be used in the discussion through out step C: 
\begin{defn}\label{additionallabel}
Given $i$, a vertex $k$ in $\mathcal{F}(t_{i}+\kappa_B)\cup\mathcal{V}'(t_{i}+\kappa_B)$ is given an additional label as $(i,1,j)$ where: 1), $i$ represents the starting time of the bottleneck event that we are focusing on with $\{L(t_i)\leq b\}$ occurs; 2), we have a $1$ in this additional labeling system $(i,1,j)$ because $\mathcal{F}(t_i+\kappa_B)\cup\mathcal{V}'(t_i+\kappa_B)$ is the first set whose elements are given additional labels and we will also give additional labels to future arriving vertices in definition \ref{extendlabel}; 3), $j$ represents that $k$ is the $j-th$ smallest element in $\mathcal{F}(t_i+\kappa_B)\cup\mathcal{V}'(t_i+\kappa_B)$.  
\end{defn}
 As an example, given the bottleneck event that starts at time $t_i=47$, $\{L(t_{47})\leq b\}$ and $\mathcal{F}(t_{47}+\kappa_B)\cup\mathcal{V}'(t_{47}+\kappa_B)=\{2,7,50,51\}$ then vertices $2,7,50,51$ are given the additional labels $(47,1,1),(47,1,2),(47,1,3),$ $(47,1,4)$ respectively. 

For succinctness, we define the cardinality of the set $\mathcal{F}(t_i+\kappa_B)\cup\mathcal{V}'(t_i+\kappa_B)$ to be
\be 
c_i:=F(t_i+\kappa_B)+|\mathcal{V}'(t_i+\kappa_B)|,\label{defineci}
\ee 
which will be used in step C frequently.

Now that we have given the vertices in $\mathcal{F}(t_i+\kappa_B)\cup\mathcal{V}'(t_i+\kappa_B)$ the additional labels $(i,1,j)$, we are free to use either of the two types of label to referring to a specific vertex as long as we can recover the arriving time $k$ of the vertex given label in $(i,1,j)$. This is achieved by defining a function $\xi:\mathbb{N}\times \{1\}\times \mathbb{N}\rightarrow \mathbb{N}$ such that for $i\in\mathbb{N}$ and $j=1,2,...,c_i$,
\be
\xi(i,1,j):=\text{$j$-th smallest element in $\mathcal{F}(t_i+\kappa_B)\cup\mathcal{V}'(t_i+\kappa_B)$},\label{xi0case1}
\ee
where we will extend the domain of $\xi$ to $\mathbb{N}\times\mathbb{Z}^*\times \mathbb{N}$ in equation (\ref{renamexi}). With equation (\ref{xi0case1}), we can use the label $(i,1,j)$ to denote the vertex that has integer label $k=\xi(i,1,j)$. Using the previous example where $i=47$ and $\mathcal{F}(t_{47}+\kappa_B)\cup\mathcal{V}'(t_{47}+\kappa_B)=\{2,7,50,51\}$, we can use the function $\xi$ to find the integer label of vertex $(47,1,3)$ by evaluating $\xi(47,1,3)$ which equals 50.

A general demonstration for the relation between the additional label and the set $\mathcal{F}(t_i+\kappa_B)\cup\mathcal{V}'(t_i+\kappa_B)$ is given as follows:
\be 
\overbrace{ 
\underbrace{
 (i,1,1),(i,1,2),...,(i,1,2h_M+2\epsilon_{min})
 }_{\in\mathcal{F}_i^B}
 ,...,(i,1,F(t_i+\kappa_B))}^{\in\mathcal{F}(t_i+\kappa_B)},\nonumber\\
 \underbrace{(i,1,\mathcal{F}(t_i+\kappa_B)+1),...,(i,1,c_i)}_{\in\mathcal{V}'(t_i+\kappa_B)}\label{overbracexi}
\ee
where $(i,1,1),...,(i,1,2h_M+2\epsilon_{min})$ are the vertices in the set $\mathcal{F}_i^B$ whose elements remain unselected as parents until the time $t_i+\kappa_B$ if $B_i$ occurs by equation (\ref{stepBrule}). And note that any label in $\mathcal{V}'(t_i+\kappa_B)$ is less than any label in $\mathcal{F}(t_i+\kappa_B)$ because all the vertices involved has the same duration of POW being $h_M$ and any vertex in $\mathcal{V}'(t_i+\kappa_B)$ must have arrived later than any vertex in $\mathcal{F}(t_i+\kappa_B)$.

We consider the vertices with unfinished POW in $\mathcal{V}'(t_i+\kappa_B)$ because these are the future free tips once they finish their POW, which plays a critical role in later arguments including Lemma \ref{temp2} and step C.  Figure \ref{ExampleStepB} provides an example of this additional labeling system. 

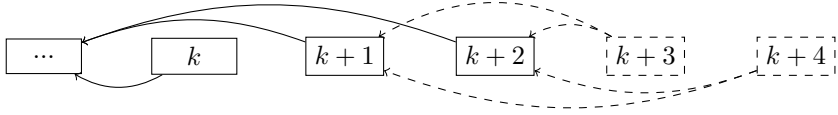
\begin{figure}
\centering
\begin{tikzpicture}

    \node (0) [rectangle,draw,minimum height=0.46cm, minimum width=1cm] at (0,-2) {...};
    \node (1) [rectangle,draw] at (2,-2) {$\quad k \quad$};
    \node (2) [rectangle,draw] at (4,-2) {$k+1$};
    \node (3) [rectangle,draw] at (6,-2) {$k+2$};
    \node (4) [rectangle,draw,dashed] at (8,-2) {$k+3$};
    \node (5) [rectangle,draw,dashed] at (10,-2) {$k+4$};

    \graph{
        (1)->[bend left=30](0),
        (2)->[bend right=20](0),
        (3)->[bend right=20](0),
        (4)->[bend right=30,dashed](2),
        (4)->[bend right=30,dashed](3),
        (5)->[bend left=20,dashed](2),
        (5)->[bend left=20,dashed](3)
    
    };
\end{tikzpicture}
\caption{Illustration of additional label with the form $(i,1,j)$ in Step B with a graph at time $t_i+\kappa_B$ with $i=79$. Here we use the same set up for solid and dashed rectangles as in Figure \ref{soD}. Solid vertex $k$ is a free tip at $t_{79}+\kappa_B$ and denoted as $(79,1,1)$. Solid vertices $k+1$ and $k+2$ have been selected as parents and they are now pending tips, hence they are not given additional labels. Dashed vertices $k+3,k+4$ are the vertices with unfinished POW at time $t_{79}+\kappa_B$ and are denoted as $(79,1,2)$ and $(79,1,3)$ respectively.}
\label{ExampleStepB}
\end{figure}

Next we establish equations (\ref{cibound}) and (\ref{remainasfreetip}) which will be used in discussion in step C including Lemma \ref{conditionC} and equation (\ref{kappac}).
Note that $F(t_i+\kappa_B)$ is bounded above by $b+\kappa_B M$ because of three reasons: First, $F(t_i)\leq L(t_i)\leq b$. Second, at most M POW can be finished at each time which determines the upper bound of increment of $F(\cdot)$. Finally, only fixed time $\kappa_B$ has passed after the time $t_i$. Furthermore, with the definition of $c_i$ in equation (\ref{defineci}), the number $c_i-F(t_i+\kappa_B)=|\mathcal{V}'(t_i+\kappa_B)|$ is equal to $h_M$ since only the vertices with labels in $\{t_i+\kappa_B-h_M,t_i+\kappa_B-h_M+1,...,t_i+\kappa_B-1\}$ have unfinished POW at time $t_i+\kappa_B$. Therefore, given $L(t_i)\leq b$ we have,
\be 
&F(t_i+\kappa_B)\leq b+M\kappa_B \quad\& &c_i\leq b+M\kappa_B+h_M \label{cibound}
\ee
Additionally, by equations (\ref{stepBrule}) and (\ref{xi0case1}), for $j\leq |\mathcal{F}_i^{B}|$ and any vertex $(i,0,j)$ which arrives at time $v=\xi(i,0,j)$,
\be
v\in \mathcal{F}(t_k) \quad \text{for all } k\in[t_i+\kappa_A,t_i+\kappa_B].\label{remainasfreetip}
\ee 

Next we highlight a central property to the bottleneck construction that arises in step B, for which we provide an illustration in Figure \ref{lemmaxi} which helps to demonstrate the idea. 

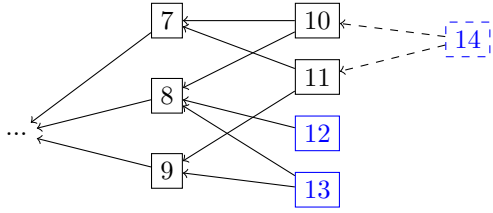
\begin{figure}
\centering
\begin{tikzpicture}
    \node (00) [rectangle] at (0,0) {...};
    
    \node (11) [rectangle,draw] at (2,1.5) {7};
    \node (12) [rectangle,draw] at (2,0.5) {8};
    \node (13) [rectangle,draw] at (2,-0.5) {9};
    
    \node (21) [rectangle,draw] at (4,1.5) {10};
    \node (22) [rectangle,draw] at (4,0.75) {11};
    \node (23) [rectangle,draw,color=blue] at (4,0) {12};
    \node (24) [rectangle,draw,color=blue] at (4,-0.75) {13};
    
    \node (31) [rectangle,draw,dashed,color=blue] at (6,1.25) {14};
    \graph{
    {(11),(12),(13)}->(00), (21)->{(11),(12)}, (22)->{(11),(13)}, (23)->{(12)},
    (24)->{(12),(13)}, (31)->[dashed]{(21),(22)}
    
    };
\end{tikzpicture}
\caption{Solid vertices and edges correspond to completed POWs, on the other hand, dashed vertices and edges represent unfinished POWs just like in Figure \ref{soD}. Here at $t_i+\kappa_B$ with $i=4$, the set of free tips $\mathcal{F}(t_i+\kappa_B)=\{12,13\}$, the set of pending tips $\mathcal{W}(t_i+\kappa_B)=\{10,11\}$ and the set of arrived vertices but with unfinished POW $\mathcal{V}'(t_i+\kappa_B)=\{14\}$. Vertices $12,13,14$ are colored in blue because they are given additional labels as $(4,1,1),(4,1,2)$ and $(4,1,3)$ respectively according to Definition \ref{additionallabel}. Black vertices are vertices that are accepted to the tangle but are not given an additional labels. Note that for any vertex with label less than 10, this vertex is or will be reachable by at least one element in $\{12,13,14\}$, i.e. any vertex in black is or will be reachable by at least one blue vertex. Generally speaking, for any vertex in $\mathcal{V}(t_i+\kappa_B)$ that are not in $\mathcal{F}(t_i+\kappa_B)$, this vertex is or will be reachable by at least one vertex in the set $\{(i,0,1),(i,0,2)),...,(i,0,c_i)\}$. The set of these blue vertices which has additional labels $\{(i,0,1),(i,0,2)),...,(i,0,c_i)\}$ plays like an input portal of the interchange structure that will be introduced in step C.}\label{lemmaxi}
\end{figure}

\begin{lem}\label{temp2}
Recall that $\mathcal{V}(t)$ is the set of vertices contained in the graph $\mathcal{G}(t)$. Suppose $L(t_i)\leq b$ and the events $A_i$ and $B_i$ corresponds to step A and B occur. For any graph $\mathcal{G}(t_n)$ with $t_n>t_i+\kappa_B+h_M$ and any vertices $v\in \mathcal{V}(t_i+\kappa_B)\setminus \mathcal{F}(t_i+\kappa_B)$, we have that $v$ is reachable from at least one element in $\{(i,1,1),(i,1,2),...,$ $(i,1,c_i)\}$.
\end{lem}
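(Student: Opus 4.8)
The plan is to first translate the statement into a purely structural fact about a fixed finite digraph sitting inside $\mathcal{G}(t_n)$, and then to prove that fact by a downward induction on vertex labels. Write $m:=t_i+\kappa_B$ and recall from Definition \ref{additionallabel} and equation (\ref{xi0case1}) that $\{(i,1,1),\dots,(i,1,c_i)\}$ is precisely the set $\mathcal{F}(t_m)\cup\mathcal{V}'(t_m)$ of free tips and not-yet-completed vertices at time $t_m$. The first step is to observe that for $t_n>t_m+h_M$ every vertex with label $<m$ has completed its POW, since each POW duration is at most $h_M$; hence $\mathcal{V}'(t_m)\subseteq\mathcal{V}(t_n)$ and, by equation (\ref{evoe}), each dashed edge of $\mathcal{E}'(t_m)$ has turned into a solid edge of $\mathcal{E}(t_n)$ (under $A_i$ and $B_i$ these recent arrivals in fact all have duration exactly $h_M$, so $|\mathcal{V}'(t_m)|=h_M$ and the threshold $t_n>t_i+\kappa_B+h_M$ is exactly what is needed). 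Consequently $\mathcal{G}(t_n)$ contains the finite digraph $H$ on vertex set $\{0,1,\dots,m-1\}$ with edge set $\mathcal{E}(t_m)\cup\{(j,X_j),(j,Y_j):j\in\mathcal{V}'(t_m)\}$, and it suffices to show that in $H$ every $v\in\mathcal{V}(t_m)\setminus\mathcal{F}(t_m)$ is reachable from some element of $\mathcal{F}(t_m)\cup\mathcal{V}'(t_m)$. I will also use the elementary fact that a parent of a vertex $j$ is a tip of $\mathcal{G}(t_j-\epsilon_j)$ and so has label strictly smaller than $j$; thus every directed edge of $H$ points from a larger label to a smaller one.

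The second step is to prove, by strong induction on $m-1-\ell(x)$ (where $\ell(x)$ denotes the label of $x$), the slightly stronger claim that \emph{every} $x\in\{0,\dots,m-1\}$ is reachable in $H$ from some element of $\mathcal{F}(t_m)\cup\mathcal{V}'(t_m)$; applied to $v\in\mathcal{V}(t_m)\setminus\mathcal{F}(t_m)$ this is exactly the desired conclusion. If $x\in\mathcal{F}(t_m)\cup\mathcal{V}'(t_m)$ the claim is trivial. Otherwise $x$ has completed its POW and, since $\mathcal{L}(t_m)=\mathcal{F}(t_m)\cup\mathcal{W}(t_m)$, either (i) $x\in\mathcal{W}(t_m)$, in which case by the definitions of $\mathcal{W}(t_m)$ and $\mathcal{E}'(t_m)$ there is $j\in\mathcal{V}'(t_m)$ with $x\in\{X_j,Y_j\}$, the edge $(j,x)$ lies in $H$, and $j$ itself belongs to the labelled set, so $x$ is reachable from $j$; or (ii) $x$ is not a tip, in which case there is an edge $(w,x)\in\mathcal{E}(t_m)$ with $\ell(w)>\ell(x)$ and $w\in\mathcal{V}(t_m)$, so the inductive hypothesis supplies a directed path in $H$ from some $u\in\mathcal{F}(t_m)\cup\mathcal{V}'(t_m)$ to $w$, which we extend by the edge $(w,x)$ to reach $x$. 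The base case $\ell(x)=m-1$ is the instance in which (i) and (ii) are both vacuous (no later vertex has arrived, so $x$ has in-degree $0$ in $H$ and cannot have been selected as a parent), forcing $x\in\mathcal{F}(t_m)\cup\mathcal{V}'(t_m)$. This establishes the claim and hence the lemma.

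I do not expect a genuine obstacle; this is a bookkeeping lemma whose content is that the ``input portal'' $\{(i,1,j)\}_{j=1}^{c_i}$ of the interchange structure dominates, under reachability, the whole of $\mathcal{V}(t_m)\setminus\mathcal{F}(t_m)$. The points that need care are: checking that the edges invoked are actually present in $\mathcal{G}(t_n)$ — this is precisely why the hypothesis $t_n>t_i+\kappa_B+h_M$ appears, since it guarantees that the out-edges of every $j\in\mathcal{V}'(t_m)$ have materialized; confirming that the induction chain $x,w,w',\dots$ in case (ii) terminates, which is immediate because labels strictly increase along it and are bounded by $m-1$; and using transitivity of reachability, which is just concatenation of directed paths. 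The hypotheses $A_i$ and $B_i$ are not logically essential to the argument but are retained since the labels $(i,1,j)$ and the constant $c_i$ were introduced only in that conditioned setting.
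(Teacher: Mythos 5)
Your proposal is correct and follows essentially the same route as the paper's proof: split off the pending-tip case (reachable directly from a vertex of $\mathcal{V}'(t_i+\kappa_B)$, hence from a label $(i,1,k)$) and reduce the non-tip case to reachability from a tip, then compose. Your downward induction on labels and the explicit check that the dashed edges out of $\mathcal{V}'(t_i+\kappa_B)$ have become solid by time $t_n>t_i+\kappa_B+h_M$ merely make rigorous steps the paper treats as immediate ("by the definition of tips"), so there is no substantive difference in approach.
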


\begin{proof}
Let $v$ be any element in $\mathcal{V}(t_i+\kappa_B)\setminus \mathcal{F}(t_i+\kappa_B)$. Recall that $\mathcal{L}(t)$ is the set of tips at time $t$, which is the set of vertices in $\mathcal{V}(t)$ with in-degree 0 in the graph $ (\mathcal{V}(t),\mathcal{E}(t))$.

Suppose $v$ is a pending tip, which means $ v\in\mathcal{W}(t_i+\kappa_B)$ and $v$ has been selected as parent by at least one vertex whose POW is unfinished, then $v$ is reachable from a vertex $v'$ in $\mathcal{V}'(t_i+\kappa_B)$ and vertex $v'$ has addintional label as $(i,1,k)$ for some $k$ by Definition \ref{additionallabel}. 

Suppose $v$ is not a pending tip which means $v\notin \mathcal{W}(t_i+\kappa_B)$, then by the assumption that $v\in\mathcal{V}(t_i+\kappa_B)\setminus \mathcal{F}(t_i+\kappa_B)$ and the fact that $\mathcal{F}(t_i+\kappa_B)\cup \mathcal{W}(t_i+\kappa_B)=\mathcal{L}(t_i+\kappa_B)$, we have $v\in \mathcal{V}(t_i+\kappa_B)\setminus\mathcal{L}(t_i+\kappa_B)$. Hence $v$ is reachable from at least one element in $\mathcal{L}(t_i+\kappa_B)$ by the definition of tips. If $v$ is reachable from a $v'\in\mathcal{F}(t_i+\kappa_B)$ where $v'$ has additional label as $(i,1,k)$ for some $k$ by equation (\ref{overbracexi}), then we are done. If instead $v$ is reachable from an element in $\mathcal{W}(t_i-\kappa_B)$, then according to the second paragraph in this proof, $v$ is reachable from a pending tip which is further reachable from a vertex with additional label $(i,1,k)$ for some $k$.

Hence despite whether $v\in\mathcal{V}(t_i+\kappa_B)\setminus \mathcal{F}(t_i+\kappa_B)$ is a pending tip or not, $v$ is reachable from at least one element in $\{(i,1,1),(i,1,2),...,(i,1,c_i)\}$.
\end{proof}

Lemma \ref{temp2} establishes that all vertices contained in $\mathcal{V}(t_i+\kappa_B)$ is connected via paths by vertices $\{(i,1,1),(i,1,2),...,(i,1,c_i)\}$. The vertex set $\{(i,1,1),(i,1,2),...,(i,1,c_i)\}$ can be viewed as an input port to the interchange structure that will be introduced in step C.

\textbf{Step C (Interchange Phase)}: Let $\kappa_C$ be a constant whose value will be defined later in equation (\ref{kappac}), we will focus on the interval $[t_i+\kappa_B,t_i+\kappa_C]$. Recall that $c_i:=F(t_i+\kappa_B)+|\mathcal{V}'(t_i+\kappa_B)|$. In order to construct the event $C_i$ corresponding to step C, we first extend Definition \ref{additionallabel} to the arrivals within the interval $[t_i+\kappa_B,t_i+\kappa_C]$ such that the first $c_i$ arrivals will be given additional labels as $(i,2,1),(i,2,2),...,(i,2,c_i)$ and the next $c_i$ arrivals will be given additional labels as $(i,3,1),(i,3,2),...,(i,3,c_i)$ and so on,
\begin{defn}\label{extendlabel}
 Given $i$, a vertex $v\in\mathbb{N}$ that arrivals within the interval $[t_i+\kappa_B,t_i+\kappa_C]$ is given an additional label as $(i,j,k)$ such that: 1), $i$ represents that the bottleneck event which we are focusing on starts at time $i$ with $\{L(t_i)\leq b\}$; 2), $j=2+\lfloor (v-t_i-\kappa_B)/c_i\rfloor$ and $k=\left[(v-t_i-\kappa_B) \mod c_i\right]+1$.  
\end{defn}
In the previous example where $i=47$, $\{L(t_{47})\leq b\}$ and $\mathcal{F}(t_{47}+\kappa_B)\cup\mathcal{V}'(t_{47}+\kappa_B)=\{2,7,50,51\}$ with $c_{47}=4$ 
 and $t_{47}+\kappa_B=52$, the vertices $2,7,50,51$ were given additional labels $(47,1,1), (47,1,2),(47,1,3),(47,1,4)$ respectively according to Definition \ref{additionallabel}. With Definition \ref{extendlabel}, the vertices with integer labels $52,53,54,55,56,57,...$ which arrive within the interval $[t_{47}+\kappa_B,t_{47}+\kappa_C]$ are given additional labels as $(47,2,1), (47,2,2), (47,2,3), (47,2,4), (47,3,1), (47,3,2), ...$ respectively. By Definition $\ref{extendlabel}$, given $i$, any vertex arrives within the interval $[t_i+\kappa_B,t_i+\kappa_C]$ has one additional label of the form $(i,j,k)$.  A general demonstration is given in equation (\ref{demonstrationxi}):

\be 
\overbrace{t_i+\kappa_B}^{(i,2,1)},\quad \overbrace{t_i+\kappa_B+1}^{(i,2,2)}, ...,\quad \overbrace{t_i+\kappa_B-1+c_i}^{(i,2,c_i)},\quad \overbrace{t_i+\kappa_B+c_i}^{(i,3,1)},\quad \overbrace{t_i+\kappa_B+c_i+1}^{(i,3,2)}, ...\label{demonstrationxi}
\ee

In order to establish that we can use either of the two types of label, we must ensure that we can find the arriving time $t$ of a vertex given its addition label $(i,j,k)$.  Base on equation (\ref{xi0case1}), we extend the domain of the function $\xi$ from $\mathbb{N}\times\{1\}\times\mathbb{N}$ to $\mathbb{N}\times \mathbb{Z}^*\times \mathbb{N}$ such that for $j\geq2$ and $k=1,...,c_i$,
\be
\xi(i,j,k):=t_i+\kappa_B-1+(j-2)c_i+k,\label{renamexi}
\ee
where $c_i=F(t_i+\kappa_B)+|\mathcal{V}'(t_i+\kappa_B)|$. Using equation (\ref{renamexi}) we can find that the vertex $(i,j,k)$ arrives at time $t=\xi(i,j,k)$ and hence this vertex also has integer label as $t=\xi(i,j,k)$. Using the previous example where $i=47$, $c_47=4$, $t_47+\kappa_B=52$ and a vertex is given an additional label $(47,2,3)$, using equation (\ref{renamexi}) we are able to recover the arriving time of the vertex by evaluating $\xi(47,2,3)$ which equals to $54$.

Within the interval $[t_i+\kappa_B,t_i+\kappa]$, we define an event $C_i$ that is an interchange structure which joins all rays together. Examples of the idea are provided in Figures \ref{stepC2'} and \ref{stepC2}. We define the event $C_i$ where each vertex arrives at $v\in[t_i+\kappa_B,t_i+\kappa]$ with additional label $(i,j,k)$ by Definition \ref{extendlabel} satisfies the following equations:
\begin{align} 
\Theta_v&=h_M \quad \&\quad \epsilon_v=\epsilon_{min}\label{stepcrule1}\\
X_v&=(i,j-1,\max(1,k-1))\label{Xselect}\\
Y_v&=(i,j-1,\min(k+1,c_i))\label{Yselect}
\end{align}
where $1_{\{\}}$ is the indicator function and $v=\xi(i,j,k)=t_i+\kappa_B-1+(j-2)c_i+k$ by equation (\ref{renamexi}). Equations (\ref{Xselect}) and (\ref{Yselect}) are saying that: \romannumeral 1), vertex $(i,j,k)$ chooses $(i,j-1,1)$ and $(i,j-1,2)$ as parents. \romannumeral 2), vertex $(i,j,c_i)$ chooses $(i,j-1,c_i-1)$ and $(i,j-1,c_i)$. \romannumeral 3), a vertex $(i,j,k)$ chooses $(i,j-1,k-1)$ and $(i,j-1,k+1)$ as parents for $1<k<c_i$. Figure \ref{stepC1} provides graphical explanation of Equation (\ref{Xselect}) and (\ref{Yselect}).

\begin{figure}
\centering
\begin{tikzpicture}
    \node(001)  at (-1,1.5) {...};
    \node(002) at (8,1.5) {...};
    \node (01) [rectangle,draw,color=blue] at (0,3) {\small $55$};
    \node (02) [rectangle,draw,color=blue] at (0,2) {\small $60$};
    \node (03) [rectangle,draw,color=blue] at (0,1) {\small $61$};
    \node (04) [rectangle,draw,color=blue] at (0,0) {\small $62$};

    \node (11) [rectangle,draw] at (1,3) {\small $63$};
    \node (12) [rectangle,draw] at (1,2) {\small $64$};
    \node (13) [rectangle,draw] at (1,1) {\small $65$};
    \node (14) [rectangle,draw] at (1,0) {\small $66$};

    \node (21) [rectangle,draw] at (2,3) {\small $67$};
    \node (22) [rectangle,draw] at (2,2) {\small $68$};
    \node (23) [rectangle,draw] at (2,1) {\small $69$};
    \node (24) [rectangle,draw] at (2,0) {\small $70$};

    \node (31) [rectangle,draw] at (3,3) {\small $71$};
    \node (32) [rectangle,draw] at (3,2) {\small $72$};
    \node (33) [rectangle,draw] at (3,1) {\small $73$};
    \node (34) [rectangle,draw] at (3,0) {\small $74$};   

    \node (41) [rectangle,draw] at (4,3) {\small $75$};
    \node (42) [rectangle,draw] at (4,2) {\small $76$};
    \node (43) [rectangle,draw] at (4,1) {\small $77$};
    \node (44) [rectangle,draw] at (4,0) {\small $78$};

    \node (51) [rectangle,draw] at (5,3) {\small $79$};
    \node (52) [rectangle,draw] at (5,2) {\small $80$};
    \node (53) [rectangle,draw] at (5,1) {\small $81$};
    \node (54) [rectangle,draw] at (5,0) {\small $82$};

    \node (61) [rectangle,draw] at (6,3) {\small $83$};
    \node (62) [rectangle,draw] at (6,2) {\small $84$};
    \node (63) [rectangle,draw] at (6,1) {\small $85$};
    \node (64) [rectangle,draw] at (6,0) {\small $86$};

    \node (71) [rectangle,draw,color=ao(english)] at (7,3) {\small $87$};
    \node (72) [rectangle,draw,color=ao(english)] at (7,2) {\small $88$};
    \node (73) [rectangle,draw,color=ao(english)] at (7,1) {\small $89$};
    \node (74) [rectangle,draw,color=ao(english)] at (7,0) {\small $90$}; 
    
    \foreach \x / \y in {1/0,2/1,3/2,4/3,5/4,6/5,7/6}{
            \graph{ (\x1) -> {(\y1),(\y2)}, (\x2) ->{(\y1),(\y3)},(\x3) ->{(\y2),(\y4)},(\x4)->{(\y3),(\y4)} };
        
    }
\end{tikzpicture}
\caption{Example of construction in step C when $i=50$, $t_i+\kappa_B=63$, $\mathcal{F}(t_i+\kappa_B)\cup\mathcal{V}'(t_i+\kappa_B)=\{55,60,61,62\}$ and $c_i:=|\mathcal{F}(t_i+\kappa_B)\cup\mathcal{V}'(t_i+\kappa_B)|=4$. Note that vertices arrive by their order of the integer label instead of their position in the graph. For example, vertex 60 arrives at time $t=60$ and hence arrives before the vertex 63 which arrives at $t=63$. Every $c_i$ consecutive vertices that arrives within the time interval $[t_i+\kappa_B,t_i+\kappa_C]$ will contribute in one column of the interchange structure, for example vertices $75,76,77,78$ contribute to one column. Here the blue vertices are the vertices in the set $\mathcal{F}(t_i+\kappa_B)\cup\mathcal{V}'(t_i+\kappa_B)$ which contribute to the first column, the green vertices are the arrivals that contribute to the $(2c_i)$-th column and the black vertices are the arrivals that arrives within the blue and green arrivals. Any green vertex that contributes to the $(2c_i)$-th column will have a path to any blue vertex, for example the vertex $87$ has a path to each vertex $55,60,51$ and $62$. Then if each vertex that arrives after the green vertices has a path to one of the green vertices which further has a path to any blue vertex, using Lemma \ref{temp2} we will be able to show that any vertex that arrives after $t_i+\kappa_C$ has a path to any vertex in $\mathcal{V}(t_i)$, which is the critical property we want for the bottleneck events in order to prove one-endedness.} \label{stepC2'}
\end{figure}
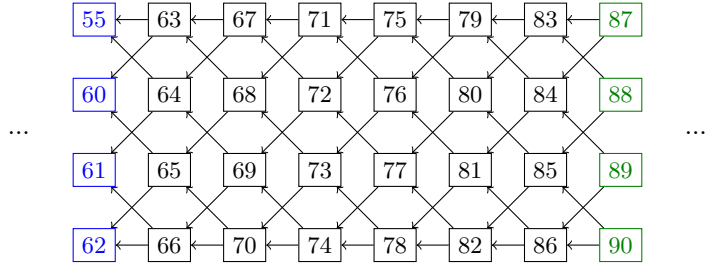

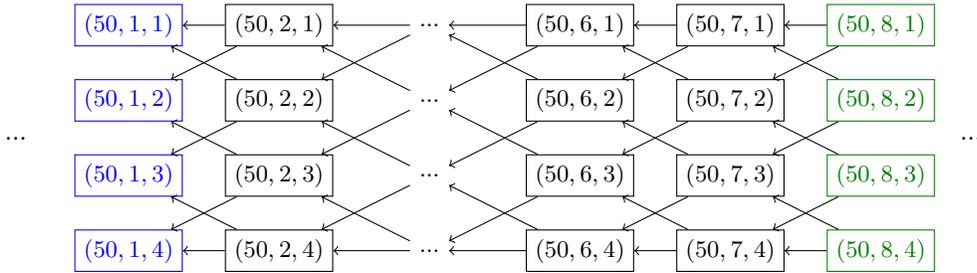
\begin{figure}
\centering
\begin{tikzpicture}
    \node(001)  at (-1.5,1.5) {...};
    \node(002) at (11.2,1.5) {...};
    \node (01) [rectangle,draw,color=blue] at (0,3) {\small $(50,1,1)$};
    \node (02) [rectangle,draw,color=blue] at (0,2) {\small $(50,1,2)$};
    \node (03) [rectangle,draw,color=blue] at (0,1) {\small $(50,1,3)$};
    \node (04) [rectangle,draw,color=blue] at (0,0) {\small $(50,1,4)$};

    \node (11) [rectangle,draw] at (2,3) {\small $(50,2,1)$};
    \node (12) [rectangle,draw] at (2,2) {\small $(50,2,2)$};
    \node (13) [rectangle,draw] at (2,1) {\small $(50,2,3)$};
    \node (14) [rectangle,draw] at (2,0) {\small $(50,2,4)$};

    \node (1)  at (4,3) {\small $...$};
    \node (2)  at (4,2) {\small $...$};
    \node (3)  at (4,1) {\small $...$};
    \node (4)  at (4,0) {\small $...$};

    \node (21) [rectangle,draw] at (6,3) {\small $(50,6,1)$};
    \node (22) [rectangle,draw] at (6,2) {\small $(50,6,2)$};
    \node (23) [rectangle,draw] at (6,1) {\small $(50,6,3)$};
    \node (24) [rectangle,draw] at (6,0) {\small $(50,6,4)$};

    \node (31) [rectangle,draw] at (8,3) {\small $(50,7,1)$};
    \node (32) [rectangle,draw] at (8,2) {\small $(50,7,2)$};
    \node (33) [rectangle,draw] at (8,1) {\small $(50,7,3)$};
    \node (34) [rectangle,draw] at (8,0) {\small $(50,7,4)$};

    \node (41) [rectangle,draw,color=ao(english)] at (10,3) {\small $(50,8,1)$};
    \node (42) [rectangle,draw,color=ao(english)] at (10,2) {\small $(50,8,2)$};
    \node (43) [rectangle,draw,color=ao(english)] at (10,1) {\small $(50,8,3)$};
    \node (44) [rectangle,draw,color=ao(english)] at (10,0) {\small $(50,8,4)$};   

    \foreach \x / \y in {4/3}{
            \graph{ (\x1) -> {(\y1),(\y2)}, (\x2) ->{(\y1),(\y3)},(\x3) ->{(\y2),(\y4)},(\x4)->{(\y3),(\y4)} };
        
    }

    \graph {(11) -> {(01),(02)}, (12) ->{(01),(03)},(13) ->{(02),(04)},(14)->{(03),(04)},
    (1) -> {(11),(12)}, (2) ->{(11),(13)},(3) ->{(12),(14)},(4)->{(13),(14)},
    (21)->{(1),(2)}, (22)->{(1),(3)}, (23)->{(2),(4)}, (24)->{(3),(4)},
    
    (31) -> {(21),(22)}, (32) ->{(21),(23)},(33) ->{(22),(24)},(34)->{(23),(24)}
    
    };
\end{tikzpicture}
\caption{Continuation of illustration in Figure \ref{stepC2'} and each vertex is labeled in the form of $(i,j,k)$ instead where $i=50$, $c_i=4$ and $t_i+\kappa_B=63$ in the example. For example, we can use $(50,1,1)$ and $55$ to denote the same vertex according to Definition \ref{additionallabel} and equation (\ref{xi0case1}). Another example is that we are using $(50,7,1)$ and $83$ to denote the same vertex according to Definition \ref{extendlabel} and equation (\ref{renamexi}). As shown in the graph, every $c_i$ consecutive vertices starting at time $t_i+\kappa_B$ contribute to one column of the interchange structure where a vertex with addition label $(i,j,k)$ contributes to the $j$-th column and $k$-th row. In general, the event $C_i$ has the property that any arrival $(i,j,k)$ with $j\geq 2$ and $k=1,2,...,c_i$ chooses $(i,j-1,\max(1,k-1))$ and 
$(i,j-1,\min(k+1,c_i))$ as parents where $1_{\{\}}$ is the indicator function. For example, here the vertex $(50,2,1)$ chooses $(50,1,1)$ and $(50,1,2)$ as parents while $(50,7,2)$ chooses $(50,6,1)$ and $(50,6,3)$ as parents.}\label{stepC2}
\end{figure}

\begin{figure}[H]
\centering
\begin{tikzpicture}
    \node at (-1.5,3) {...};
    \node at (11,3) {...};   
    \node (01) [rectangle,draw,color=blue] at (0,6) {\small $(i,1,1)$};
    \node (02) [rectangle,draw,color=blue] at (0,5) {\small $(i,1,2)$};
    \node (03) [rectangle,draw,color=blue] at (0,4) {\small $(i,1,3)$};
    \node (04) [rectangle,draw,color=blue] at (0,3) {\small $(i,1,4)$};
    \node (05)  at (0,2) {...};
    \node (06) [rectangle,draw,color=blue] at (0,1) {\small $(i,1,c_i-1)$};
    \node (07) [rectangle,draw,color=blue] at (0,0) {\small $(i,1,c_i)$};

    \node (11) [rectangle,draw] at (2,6) {\small $(i,2,1)$};
    \node (12) [rectangle,draw] at (2,5) {\small $(i,2,1)$};
    \node (13) [rectangle,draw] at (2,4) {\small $(i,2,1)$};
    \node (14)  at (2,3) {...};
    \node (15)  at (2,2) {...};
    \node (16) [rectangle,draw] at (2,1) {\small $(i,2,c_i-1)$};
    \node (17) [rectangle,draw] at (2,0) {\small $(i,2,c_i)$};  

    \node (21)  at (4,6) {...};
    \node (22)  at (4,5) {...};
    \node (23)  at (4,4) {...};
    \node (24)  at (4,3) {...};
    \node (25)  at (4,2) {...};
    \node (26)  at (4,1) {...};
    \node (27)  at (4,0) {...};  

    \node (31) [rectangle,draw] at (6.5,6) {\small $(i,2c_i-1,1)$};
    \node (32) [rectangle,draw] at (6.5,5) {\small $(i,2c_i-1,1)$};
    \node (33) [rectangle,draw] at (6.5,4) {\small $(i,2c_i-1,1)$};
    \node (34)  at (6.5,3) {...};
    \node (35)  at (6.5,2) {...};
    \node (36) [rectangle,draw] at (6.5,1) {\small $(i,2c_i-1,c_i-1)$};
    \node (37) [rectangle,draw] at (6.5,0) {\small $(i,2c_i-1,c_i)$};    
    
    \node (41) [rectangle,draw,color=ao(english)] at (9.5,6) {\small $(i,2c_i,1)$};
    \node (42) [rectangle,draw,color=ao(english)] at (9.5,5) {\small $(i,2c_i,1)$};
    \node (43) [rectangle,draw,color=ao(english)] at (9.5,4) {\small $(i,2c_i,1)$};
    \node (44)  at (9.5,3) {...};
    \node (45)  at (9.5,2) {...};
    \node (46) [rectangle,draw,color=ao(english)] at (9.5,1) {\small $(i,2c_i,c_i-1)$};
    \node (47) [rectangle,draw,color=ao(english)] at (9.5,0) {\small $(i,2c_i,c_i)$};    

     \foreach \x / \y in {1/0,2/1,3/2,4/3}{
            \graph{ (\x1) -> {(\y1),(\y2)}, 
            (\x2) ->{(\y1),(\y3)},
            (\x3) ->{(\y2),(\y4)},
            (\x4)->{(\y3),(\y5)}, 
            (\x5)->{(\y4),(\y6)},
            (\x6)->{(\y5),(\y7)},
            (\x7)->{(\y6),(\y7)}
            };
        
    }
    
    \graph {(11) -> {(01),(02)}, (12) ->{(01),(03)},(13) ->{(02),(04)},(16) ->{(05),(07)}, (17)->{(07),(06)},
    (21) -> {(11),(12)}, (22) ->{(11),(13)},(23) ->{(12),(14)},(26) ->{(15),(17)}, (27)->{(17),(16)}};
\end{tikzpicture}
\caption{Generalized illustration of the event $C_i$ based on Figures \ref{stepC2'} and \ref{stepC2}. Note that by equation (\ref{renamexi}), vertex $(i,j,k)$ arrives before vertex $(i,j,k')$ if $k'>k$ while vertex $(i,j,k)$ arrives before vertex $(i,j',k')$ if $j'>j$. In event $C_i$, the vertices are connected like meshes as shown in the graph and hence for any vertex $(i,j,k)$ with $j\geq 2c_i$ it has a path to any $(i,1,m)$ with $1\leq m\leq c_i$.}\label{stepC1}
\end{figure}

Recall that $\Sigma_i$ denotes the $\sigma$-algebra generated by all the random variables $\Theta_k,\epsilon_k,X_k$ and $Y_k$ for $k=1,2,...,i$. The conditional probability of event $C_i$ is given as follows:
\begin{lem}\label{conditionC}
 For any event $D_{i-1}\in \Sigma_{i-1}$ and $D_{i-1}\subseteq \{L(t_i)\leq b\}$, we have $P(B_i|D_{i-1}\cap A_i\cap B_i)>0$.
\end{lem}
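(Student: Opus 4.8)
The plan is to reduce the statement to a combinatorial feasibility check and then exploit that $[t_i+\kappa_B,t_i+\kappa_C]$ is a bounded interval. First I would observe that $C_i$ is an intersection, over the finitely many vertices $v$ arriving in $[t_i+\kappa_B,t_i+\kappa_C]$, of events of the form $\{\Theta_v=h_M,\ \epsilon_v=\epsilon_{min},\ X_v=(i,j-1,\max(1,k-1)),\ Y_v=(i,j-1,\min(k+1,c_i))\}$ where $v=\xi(i,j,k)$. Conditioning one vertex at a time in increasing order of arrival and using (\ref{evolutiondistribution}), the conditional probability of the $v$-th factor, given $D_{i-1}\cap A_i\cap B_i$ together with the prescribed choices of all earlier vertices of $C_i$ (an event in $\Sigma_{v-1}$), equals $p_{\Theta,M}\,p_{\epsilon,\epsilon_{min}}\,L(t_v-\epsilon_{min})^{-2}$ times the indicator that both prescribed parents lie in $\mathcal{L}(t_v-\epsilon_{min})$. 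Hence $P(C_i\mid D_{i-1}\cap A_i\cap B_i)>0$ will follow once I show that, on the conditioning event, the two vertices $(i,j-1,\max(1,k-1))$ and $(i,j-1,\min(k+1,c_i))$ are tips at time $t_v-\epsilon_{min}$, since then every factor in the finite product is strictly positive.

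I would prove this tip-membership claim by induction on the column index $j\ge 2$, keeping an explicit timetable of the vertices involved. By (\ref{renamexi}) the vertex $(i,j,k)$ arrives at $\xi(i,j,k)=t_i+\kappa_B-1+(j-2)c_i+k$, so consecutive columns are offset by exactly $c_i$ arrival steps; each such vertex has POW duration $h_M$ by (\ref{stepcrule1}), hence finishes its POW $h_M$ steps after arrival; and, crucially, within $C_i$ the only vertices pointing at a column-$(j-1)$ vertex are the column-$j$ vertices with neighbouring second index, so a column-$(j-1)$ vertex receives its first \emph{solid} in-edge only when the POW of one of its (at most two) selectors completes. For the base case $j=2$ the prescribed parents lie in column $1$, i.e.\ in $\mathcal{F}(t_i+\kappa_B)\cup\mathcal{V}'(t_i+\kappa_B)$ with the labels of Definition \ref{additionallabel}: a vertex $(i,1,m)$ is either already a free tip at $t_i+\kappa_B$ (by (\ref{overbracexi}), and by (\ref{stepBrule}) for those in $\mathcal{F}_i^B$) or an element of $\mathcal{V}'(t_i+\kappa_B)$ which, having POW duration $h_M$ by step A, becomes a free tip within $h_M$ steps, and in either case it is first selected, in $C_i$, only by $(i,2,m-1)$ and $(i,2,m+1)$.

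The remaining work is the timing bookkeeping. For a generic column-$(j-1)$ vertex $(i,j-1,k)$, its earliest selector observes the DAG no sooner than $\xi(i,j-1,k)+c_i-1-\epsilon_{min}$, its latest selector observes no later than $\xi(i,j-1,k)+c_i+1-\epsilon_{min}$, while $(i,j-1,k)$ is a tip from $\xi(i,j-1,k)+h_M$ (POW completion) until at least $\xi(i,j-1,k)+c_i-1+h_M$ (first solid in-edge); analogous estimates hold for the boundary indices $k\in\{1,c_i\}$ and for the column-$1$ vertices emerging from $\mathcal{V}'(t_i+\kappa_B)$. Both observation epochs fall inside the "is a tip" window provided $c_i\ge h_M+\epsilon_{min}+1$ and $h_M\ge 2$. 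The second inequality is forced by (\ref{stepA1}); the first holds comfortably because Corollary \ref{FA>} gives $F(t_i+\kappa_A)>3(h_M+\epsilon_{min})$, this quantity does not decrease during step B (there $\delta_j\le 1$ while exactly one POW of duration $h_M$ completes per step, by (\ref{stepBrule1})), and $|\mathcal{V}'(t_i+\kappa_B)|=h_M$, so $c_i=F(t_i+\kappa_B)+h_M>3(h_M+\epsilon_{min})+h_M$. I expect this bookkeeping at the boundary indices and for the $\mathcal{V}'(t_i+\kappa_B)$ vertices to be the only genuinely fiddly step; everything else is immediate from (\ref{evolutiondistribution}). Finally, equation (\ref{kappac}) will fix $\kappa_C$ so that the construction stops after column $2c_i$, and since $c_i\le b+M\kappa_B+h_M$ by (\ref{cibound}) the interval $[t_i+\kappa_B,t_i+\kappa_C]$, and hence the product of positive factors, is finite, which completes the proof.
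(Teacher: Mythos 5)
Your proposal is correct and follows essentially the same route as the paper's proof: reduce, via equation (\ref{evolutiondistribution}), to checking that the two prescribed parents of each arrival $(i,j,k)$ lie in $\mathcal{L}(t_v-\epsilon_{min})$, and verify this by timing windows using that only the neighbouring column-$j$ vertices select a column-$(j-1)$ vertex, that $h_M\geq 2$ keeps a selected tip a tip until a selector's POW completes, and that $c_i$ exceeds $h_M+\epsilon_{min}$ by a safe margin (the paper uses $c_i>2(h_M+\epsilon_{min})$; your bound via monotonicity of $F$ in step B is even stronger). The only differences are organizational — you run an induction over columns with generic window estimates where the paper spells out five cases (the boundary indices and the $\mathcal{V}'(t_i+\kappa_B)$ parents you flag as "fiddly" are exactly the paper's Cases 1, 2 and 5) — and a harmless off-by-one in when a completed POW is counted as a free tip, which the slack in $c_i$ absorbs.
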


\begin{proof}
As a reminder a vertex with additional label $(i,j,k)$ has the integer label $v=\xi(i,j,k)$ and this vertex arrives at time $v=\xi(i,j,k)$, which enables us to recover the arriving time and the integer label of a vertex from its additional label using the function $\xi()$. By equation (\ref{Xselect}), we first define the function $v_X(i,j,k):=\xi(i,j-1,\max(1,k-1))$ which gives the integer label for one of the desired parent of the vertex with additional label $(i,j,k)$. Similarly by equation (\ref{Yselect}), we define the function $v_Y(i,j,k):=\xi(i,j-1,\min(k+1,c_i))$ to represent the integer label for the other desired parent of the vertex with additional label $(i,j,k)$. Recall that $\mathcal{L}(t)$ is the set of tips at time $t$. In order to proof Lemma \ref{conditionC}, by equations (\ref{evolutiondistribution}), (\ref{stepcrule1}), (\ref{Xselect}) and (\ref{Yselect}), it suffices to show the following: given $\{L(t_i)\leq b\}$, $A_i$ and $B_i$ occur and any vertex $v$ with additional label $(i,j,k)$ arrives within the time interval $[t_i+\kappa_B,t_i+\kappa_C]$, then the vertices with integer labels $v_X(i,j,k):=\xi(i,j-1,\max(1,k-1))$ and $v_Y(i,j,k):=\xi(i,j-1,\min(k+1,c_i))$ are elements in $\mathcal{L}(t_v-\epsilon_{min})$.
 
 We first introduce some observations which will be used later in the proof: Observation 1 is that each vertex $(i,j,k)$ is selected as parent only by vertices with additional labels $(i,j+1,k-1)$, $(i,j+1,k)$, or $(i,j+1,k+1)$ according to equations (\ref{Xselect}) and (\ref{Yselect}) as well as equation (\ref{remainasfreetip}). Observation 2 is that a free tip at time $t$ must still be a tip at time $t+2$ because every arriving vertex has POW time $h_M$ and $h_M\geq 2$. 

 Here we consider five cases for vertex $v$ that arrives within $[t_i+\kappa_B,t_i+\kappa_C]$. Cases 1 and 2 cover the cases for the arrivals that has additional label $(i,j,k)$ such that $j=1$, i.e. the first $c_i$ vertices arrives starting from time $t_i+\kappa_B$. Cases 3,4 and 5 covers the rest of the vertices depending on their $k$ value in their additional label $(i,j,k)$.

 Case 1: If $v\in[i+\kappa_B,i+\kappa_B+|\mathcal{F}_i^{B}|-1)$ where $\mathcal{F}_i^{B}$ is the set that remains unselected as parents in step B, then by Definition \ref{extendlabel}, the additional label $(i,j,k)$ of the vertex $v$ satisfies that $j=2$ and $k\in\{1,2,...,|\mathcal{F}_i^{B}|\}$. Hence the two desired parents for vertex $v$ described in equation (\ref{Xselect}) and (\ref{Yselect}) are $(i,j-1,\max(1,k-1))=(i,1,\max(1,k-1))$ and $(i,j-1,\min(k+1,c_i))=(i,1,\min(k+1,c_i))$ where both of the vertices are free tips at time $t\in[t_i+\kappa_A,t_i+\kappa_B]$ by equations (\ref{remainasfreetip}). Then using Observation 1 and 2, we have the vertices with integer labels $v_X(i,j,k):=\xi(i,j-1,\max(1,k-1))$ and $v_Y(i,j,k):=\xi(i,j-1,\min(k+1,c_i))$ remain as elements of the set of tips and are counted in $\mathcal{L}(t)$ for $t\in[t_i+\kappa_A,t_v]$ and hence $v_X(i,j,k),v_Y(i,j,k)\in\mathcal{L}(t_v-\epsilon_{min})$.
 
 Case 2: For $v\in[i+\kappa_B+|\mathcal{F}_i^{B}|,i+\kappa_B+c_i-1]$, by Definition \ref{extendlabel}, vertex $v$ has additional label $(i,j,k)$ such that $j=2$ and $|\mathcal{F}_i^{B}|<k\leq c_i$.
 Hence the two desired parents for vertex $v$ described in equation (\ref{Xselect}) and (\ref{Yselect}) are $(i,j-1,\max(1,k-1))=(i,1,\max(1,k-1))$ and $(i,j-1,\min(k+1,c_i))=(i,1,\min(k+1,c_i))$. By equation (\ref{overbracexi}) and the fact that any unfinished POW at time $t_i+\kappa_B$ has duration $h_M$, we have all vertices $(i,1,F(t_i+\kappa_B)),
(i,1,F(t_i+\kappa_B)+1),...,(i,1,c_i)$ have completed their POWs and become free tips no later than time $t_i+\kappa_B+h_M$. Together with Observation 1 and 2 as well as the fact that $v-\epsilon_{min}\geq i+\kappa_B+h_M$ because $|\mathcal{F}_i^{B}|=2(h_M+\epsilon_{min})>h_M+\epsilon_{min}$,
we have that the vertices with integer labels $v_X(i,j,k):=\xi(i,1,\max(1,k-1))$ and $v_Y(i,j,k):=\xi(i,1,\min(k+1,c_i))$ remain as tips and are counted in $\mathcal{L}(t)$ for $t\in[t_i+\kappa_B+h_M,t_v]$  and hence $v_X(i,j,k),v_Y(i,j,k)\in\mathcal{L}(t_v-\epsilon_{min})$.

Cases 1 and 2 establish that all vertices $(i,2,1),(i,2,2),...,(i,2,c_i)$ have positive probability to choose their parents satisfying equation (\ref{Xselect}) and (\ref{Yselect}). Case 3, 4 and 5 establish the same results for any vertex $v\in [t_i+\kappa_B+c_i,t_i+\kappa_C]$ which has additional label as $(i,j,k)$ with $j\geq 3$ and $k\in\{1,2,...,c_i\}$. The idea is that there are at least $c_i-2$ vertices arrives between the arriving time of the vertex $(i,j,k)$ and of the two parents of $v_X(i,j,k),v_Y(i,j,k)$. Since $c_i>2(h_M+\epsilon_{min})$, by the time vertex $(i,j,k)$ arrives, $v_X(i,j,k)$ and $v_Y(i,j,k)$ have already finished POW for at least $\epsilon_{min}$ steps, then we have that $v_X(i,j,k),v_Y(i,j,k)\in\mathcal{L}(t_v-\epsilon_{min})$.

Case 3: For a vertex $v\in [t_i+\kappa_B+c_i,t_i+\kappa_C]$ such that the vertex has additional label $(i,j,k)$ with $j\geq 3$ and $k=1$, we have $(i,j-1,\max(1,k-1))=(i,j-1,1)$ and $(i,j-1,\min(k+1,c_i))=(i,j-1,2)$. The vertex $(i,j-1,1)$ arrives at time $\xi(i,j-1,1)=t_i+\kappa_B+(j-3)c_i$, finishes its POW at time $\xi(i,j-1,1)+h_M$ and counted as a free tip at time $\xi(i,j-1,1)+h_M+1$, i.e. $\xi(i,j-1,1)\in \mathcal{F}(\xi(i,j-1,1)+h_M+1)$. Using the fact that $c_i=F(t_i+\kappa_B)+|\mathcal{V}'(t_i+\kappa_B)|>2(h_M+\epsilon_{min})$, $h_M\geq 2$ and Observation 1, we have that the vertex $(i,j-1,1)$ is counted as free tip at time $\xi(i,j-1,1)+h_M+1\leq t_i+\kappa_B+(j-2)c_i-\epsilon_{min}=\xi(i,j,1)-\epsilon_{min}$, which means that $\xi(i,j-1,1)\in \mathcal{F}(\xi(i,j,k)-\epsilon_{min})$ and the vertex $(i,j-1,1)$ is counted as a free tip at least $\epsilon_{min}$ steps earlier than the arrival time of the vertex $(i,j,1)$. This together with Observation 1, we have the vertex $v_X(i,j,1)=\xi(i,j-1,1)$ is an element of $\mathcal{L}(t_v-\epsilon_{min})$.
Similarly, we have the vertex $v_Y(i,j,1)=\xi(i,j-1,2)$ is an element of $\mathcal{L}(t_v-\epsilon_{min})$.

Case 4: For a vertex $v\in [t_i+\kappa_B+c_i,t_i+\kappa_C]$ such that the vertex has additional label $(i,j,k)$ with $j\geq 3$ and $k\in \{2,3,...,c_i-1\}$, we have $(i,j-1,\max(1,k-1))=(i,j-1,k-1)$ and $(i,j-1,\min(k+1,c_i))=(i,j-1,k+1)$. The vertex $(i,j-1,k+1)$ arrives at time $\xi(i,j-1,k+1)=t_i+\kappa_B+(j-3)c_i+k$, finishes its POW at time $\xi(i,j-1,k+1)+h_M$ and counted as a free tip at time $\xi(i,j-1,k+1)+h_M+1$, i.e. $\xi(i,j-1,k+1)\in\mathcal{F}(\xi(i,j-1,k+1)+h_M+1)$. Using the fact that $c_i>2(h_M+\epsilon_{min})$, $h_M\geq 2$ and Observation 1, we have $\xi(i,j-1,k+1)+h_M+1\leq t_i+\kappa_B+(j-2)c_i-\epsilon_{min}=\xi(i,j,k)-\epsilon_{min}$, which means that $\xi(i,j-1,k+1)\in\mathcal{F}(\xi(i,j,k)-\epsilon_{min})$ and the vertex $(i,j-1,k+1)$ is counted as a free tip at least $\epsilon_{min}$ steps earlier than the arrival time of the vertex $(i,j,k)$. This together with Observation 1, we have the vertex $v_Y(i,j,k)=\xi(i,j-1,k+1)$ is an element of $\mathcal{L}(t_v-\epsilon_{min})$. Similarly, the vertex $(i,j-1,k-1)$ that arrives at time $\xi(i,j-1,k-1)=t_i+\kappa_B+(j-2)c_i+k-2$ finishes its POW at time $\xi(i,j-1,k-1)+h_M$ and counted as a free tip at time $\xi(i,j-1,k-1)+h_M+1$, i.e. $\xi(i,j-1,k-1)\in\mathcal{F}(\xi(i,j-1,k-1)+h_M+1)$. Using the fact that $c_i>2(h_M+\epsilon_{min})$ and $h_M\geq 2$ we have $\xi(i,j-1,k-1)+h_M+1\leq t_i+\kappa_B+(j-2)c_i-\epsilon_{min}=\xi(i,j,k)-\epsilon_{min}$, which means that $\xi(i,j-1,k-1)\in\mathcal{F}(\xi(i,j,k)-\epsilon_{min})$ and the vertex $(i,j-1,k-1)$ is already a free tip at least $\epsilon_{min}$ steps earlier than the arrival of the vertex $(i,j,k)$. Here although the vertex $(i,j-1,k-1)$ is also selected as parent by the vertex $(i,j,k-2)$, by Observations 1 and 2 we still have that the vertex $v_X(i,j,k)=\xi(i,j-1,k-1)$ is an element of $\mathcal{L}(t_v-\epsilon_{min})$.

Case 5: For a vertex $v\in [t_i+\kappa_B+c_i,t_i+\kappa_C]$ such that the vertex has additional label $(i,j,k)$ with $j\geq 3$ and $k=c_i$, we have $(i,j-1,\max(1,k-1))=(i,j-1,c_i-1)$ and $(i,j-1,\min(k+1,c_i))=(i,j-1,c_i)$. Here the situation is similar to Case 4 where we we need to use both Observation 1 and 2, this is because both vertex $(i,j-1,c_i-1)$ and vertex $(i,j-1,c_i)$
has been selected as parents by vertices that arrive before vertex $(i,j,c_i)$ arrives. Hence using the same process in case 4, we have vertices $v_X(i,j,k)=\xi(i,j-1,c_i-1)$ and 
$v_Y(i,j,k)=\xi(i,j-1,c_i)$ are elements in $\mathcal{L}(t_v-\epsilon_{min})$.

Cases 1 and 2 covers any vertex that arrives within the interval $[t_i+\kappa_B, t_i+\kappa_B+c_i-1]$. For any vertex $v$ that arrives within $[t_i+\kappa_B+c_i,t_i+\kappa_C]$, by Definition \ref{extendlabel}, the additional label of vertex $v$ must have $j\geq 3$ and $k=1,2,...,c_i$. Hence depending on the value of $k$, vertex $v$ arrives within $[t_i+\kappa_B+c_i,t_i+\kappa_C]$ must be included in either cases 3, 4 or 5. Therefore, cases 1 through 5 cover all the situations for any vertex  $v\in [t_i+\kappa_B+c_i,t_i+\kappa_C]$, we are done.
\end{proof}

In order to establish Lemma \ref{temp3}, we now identify the value of $\kappa_C$ as shown in equation (\ref{kappac}) with the following ideas: 1), as demonstrated in Figures \ref{stepC2'} and \ref{stepC2}, the desired number of columns in the interchange structure is at least $2c_i$. Hence, with equation (\ref{cibound}), we select $\kappa_C>\kappa_B+2(b+M\kappa_B+h_M)^2$ such that $(\kappa_C-\kappa_B)/c_i>2c_i$ in order to achieve this goal; 2), after the vertices $(i,2c_i,1),(i,2c_i,2),...,(i,2c_i,c_i)$ arrives we prolong step C with time $h_M+\epsilon_{max}+1$ so that these vertices are included in the DAG $\mathcal{G}(t)$ for $t\geq t_i+\kappa_C-\epsilon_{max}$. Recall that a vertex $k$ select parents within the set of tips $\mathcal{L}(t_k-\epsilon_k)$ where $\epsilon_k$ is a random variable bounded above by $\epsilon_{max}$. Then by prolonging step C, we make sure that every vertices that arrives later than $t_i+\kappa_C$ will select their parents depending on the DAG where vertices $(i,2c_i,1),(i,2c_i,2),...,(i,2c_i,c_i)$ are included.
\be
\kappa_C:=\kappa_B+2(b+M\kappa_B+h_M)^2+h_M+\epsilon_{max}+1>\kappa_B+2c_i^2+h_M+\epsilon_{max}+1.\label{kappac}
\ee

With $\kappa_C$ defined, we have completed the construction of the bottleneck event $\mathcal{B}_i:=\{L(t_i)\leq b\}\cup A_i\cup B_i \cup C_i$. We now introduce Lemma \ref{temp3} which establishes the result that any vertex which arrives after $t_i+\kappa_C$ has a path to one of the element in $\{(i,2c_i,1),(i,2c_i,2),...,(i,2c_i,c_i)\}$ and hence further to any element in $\{(i,1,1),(i,1,2),$ $...,(i,1,c_i)\}$. Lemma \ref{temp3} will be used in proving one-endedness.
\begin{lem} \label{temp3}
If given $i$ such that the event $\{L(t_i\leq b)\}$ and the events $A_i,B_i$ and $C_i$ described by equations (\ref{stepA1}), (\ref{stepA}), (\ref{stepBrule1}), (\ref{stepBrule}), (\ref{stepcrule1}), (\ref{Xselect}) and (\ref{Yselect}) occur, then: 1), any vertex $v>t_i+\kappa_C$ has directed path to an element in $\{(i,2c_i,1),(i,2c_i,2),...,(i,2c_i,c_i)\}$ in the graph $\cup_t^{\infty}\mathcal{G}(t)$; 2), any vertex $v>t_i+\kappa_C$ has directed path to any element in $\{(i,1,1),(i,1,2),...,(i,1,c_i)\}$ in the graph $\cup_t^{\infty}\mathcal{G}(t)$.
\end{lem}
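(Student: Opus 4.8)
The plan is to establish part 1) by an induction ``forwards in time'' and then to deduce part 2) from it using the internal connectivity of the interchange of equations (\ref{stepcrule1})--(\ref{Yselect}). The first thing I would record is what the choice of $\kappa_C$ in (\ref{kappac}) buys us. By (\ref{renamexi}) the $k$-th vertex of column $j$ arrives at $\xi(i,j,k)=t_i+\kappa_B-1+(j-2)c_i+k$ and, being a step-C arrival, has POW duration $h_M$; hence every vertex of column $2c_i$ has completed its POW by time $t_i+\kappa_B-1+2c_i^{2}-c_i+h_M$, which by (\ref{cibound}) and (\ref{kappac}) is at most $n^{*}:=t_i+\kappa_C-\epsilon_{max}$. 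So columns $1,\dots,2c_i$ all lie in $\mathcal{G}(t_{n^{*}})$. A direct check of (\ref{Xselect})--(\ref{Yselect}) shows every vertex of column $j-1$ is chosen as a parent by some vertex of column $j$, so once column $j$ is finalised no vertex of column $j-1$ is a tip; combined with the fact that every pending tip present at time $t_i+\kappa_B$ stops being a tip within $h_M$ further steps (it can only have been selected by vertices that arrived no later than $t_i+\kappa_B$, each of POW duration $h_M$), this gives: for every $t\ge n^{*}$, every tip in $\mathcal{L}(t)$ lies in some column $j\ge 2c_i$. The same arrival-time bookkeeping, read the other way, shows the sharper statement that any column vertex whose POW is completed strictly after $n^{*}$ must lie in a column $j>2c_i$, and any such vertex has a directed path to a vertex of column $2c_i$ obtained by following parent edges downwards through (\ref{Xselect})--(\ref{Yselect}).

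For part 1) I would induct on $n\ge n^{*}$ with the hypothesis ``every vertex of $\mathcal{L}(t_n)$ has a directed path in $\cup_{s}\mathcal{G}(t_s)$ to an element of $\{(i,2c_i,1),\dots,(i,2c_i,c_i)\}$''. The base case is exactly the previous paragraph. For the inductive step, a vertex $w$ that is in $\mathcal{L}(t_{n+1})$ but not in $\mathcal{L}(t_n)$ completed its POW at time $t_{n+1}$, so it arrived at $t_w=t_{n+1}-\Theta_w\le t_n$; using $h_1\le\Theta_w\le h_M$ and $\epsilon_{min}\le\epsilon_w\le\epsilon_{max}$ one checks that if $t_w\le t_i+\kappa_C$ then $w$ is necessarily a column vertex of a column $j>2c_i$ (every earlier column, and every vertex arriving before step C, has completed its POW before $n^{*}$), hence good by the previous paragraph, while if $t_w>t_i+\kappa_C$ then $w$ chose $X_w,Y_w\in\mathcal{L}(t_w-\epsilon_w)$ with $n^{*}<t_w-\epsilon_w\le t_n$, so by the inductive hypothesis $X_w$ reaches column $2c_i$ and therefore so does $w$. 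Vertices of $\mathcal{L}(t_{n+1})$ inherited from $\mathcal{L}(t_n)$ are handled by the hypothesis directly. Finally, any $v>t_i+\kappa_C$ chooses its parents in $\mathcal{L}(t_v-\epsilon_v)$ with $t_v-\epsilon_v\ge t_i+\kappa_C+1-\epsilon_{max}>n^{*}$, so by the completed induction $v$ has a directed path to a vertex of column $2c_i$, which is part 1).

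To obtain part 2) from part 1) it remains to prove the ``mesh'' property: for any $k$ and any $m\in\{1,\dots,c_i\}$ there is a directed path from $(i,2c_i,k)$ to $(i,1,m)$. I would build it by descending one column at a time through (\ref{Xselect})--(\ref{Yselect}): first move the second index from $k$ down to $1$ using $X_v=(i,j-1,\max(1,k-1))$ ($k-1$ columns), then stay at index $1$ using $X_v=(i,j-1,1)$ for the appropriate number of columns, then move the second index from $1$ up to $m$ using $Y_v=(i,j-1,\min(k+1,c_i))$ ($m-1$ columns). This fits in the $2c_i-1$ columns separating column $2c_i$ from column $1$ because $2c_i\ge k+m-1$ whenever $k,m\le c_i$, and every edge used is present in $\cup_{s}\mathcal{G}(t_s)$ since event $C_i$ occurs and these parent choices are exactly the legal ones identified in the proof of Lemma \ref{conditionC}. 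Composing this path with the one furnished by part 1) gives, for every $v>t_i+\kappa_C$, a directed path to each of $(i,1,1),\dots,(i,1,c_i)$, proving part 2).

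I expect the main obstacle to be the bookkeeping underlying the induction in part 1): verifying that no ``stray'' tip --- an old vertex from before step C, or a low-index column vertex --- is still a tip at time $n^{*}$ and that none appears as a fresh tip afterwards. The self-referential nature of parent selection, in which the parents of a vertex $v$ are tips at an earlier time that is nonetheless still past $n^{*}$, forces a careful interplay between the POW durations $h_1,\dots,h_M$, the observation lags $\epsilon_{min},\dots,\epsilon_{max}$, and the exact column-arrival times fixed by (\ref{renamexi}) and (\ref{kappac}).
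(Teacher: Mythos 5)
Your proposal is correct and essentially mirrors the paper's proof: it uses the choice of $\kappa_C$ in (\ref{kappac}) in the same way (all columns up to $2c_i$ are finalised before your $n^{*}=t_i+\kappa_C-\epsilon_{max}$, so every tip that a post-$\kappa_C$ arrival can observe traces back to column $2c_i$) and the same mesh path through (\ref{Xselect})--(\ref{Yselect}) for part 2), the only difference being that you prove part 1) by a forward induction on the tip sets where the paper instead descends through parents of $v$ until it meets a vertex with label at most $t_i+\kappa_C$, which by its Observation 2 must lie in a column $j\geq 2c_i$. Two harmless points to tidy: your intermediate claim that for every $t\geq n^{*}$ all tips lie in columns $j\geq 2c_i$ fails for large $t$ (post-step-C arrivals eventually become tips), but you only use it at $t=n^{*}$ as the base case; and since the inductive step invokes the hypothesis at the earlier time $t_w-\epsilon_w\leq t_n$, the induction should be phrased in its strong (cumulative) form.
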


\begin{proof}
Here we first provide some observations based on equations (\ref{stepcrule1}), (\ref{Xselect}) and (\ref{Yselect}) and Figure \ref{stepC1}. Observation 1: For any vertex which arrives within the interval $[t_i+\kappa_B,t_i+\kappa_C]$ and has additional label $(i,j,k)$ such that $j> 2c_i$, this vertex has a directed path to a vertex with label $(i,2c_i,k')$ for some $k'$.

Observation 2: By equation (\ref{renamexi}), vertices $(i,2c_i,1),(i,2c_i,2),...,(i,2c_i,c_i)$ arrives no later than $t_i+\kappa_B+2c_i^2$ and counted as free tip no later than $t_i+\kappa_B+2c_i^2+h_M+1$. Therefore by equation (\ref{kappac}), $(i,2c_i,1),(i,2c_i,2),...,(i,2c_i,c_i)\in \mathcal{V}(t)$ for any $t>t_i+\kappa_C-\epsilon_{max}$ where $\mathcal{V}(t)$ is the set of vertices that have completed POW at time $t$. Hence by equations (\ref{stepcrule1}), (\ref{Xselect}) and (\ref{Yselect}) and Figure \ref{stepC1}, we have that for any $t>t_i+\kappa_C-\epsilon_{max}$, the set of leaves only contains vertices that arrives after the vertex $(i,2c_i,1)$ which means $\mathcal{L}(t)\subseteq \{n|n\geq \xi(i,2c_i,1)\}$.

We now prove the first statement in Lemma \ref{temp3} by considering two cases.

Case 1: For any vertex $v>t_i+\kappa_C$, suppose one of the parents of the vertex $v$ is a vertex with integer label $v'$ such that $v'\leq t_i+\kappa_C$. Then by Definition \ref{extendlabel}, $v'$ has additional label $(i,j',k')$ for some $j'$ and $k'$. Also, since vertex $v'$ is a parent of $v$, we have that $v'\in\mathcal{L}(v-\epsilon_v)$ where $v-\epsilon_v>t_i+\kappa_C-\epsilon_{max}$. By Observation 2, the additional label $(i,j',k')$ of vertex $v'$ satisfies that $j'\geq 2c_i$. Hence by Observation 1, $v$ has a directed path to a vertex with label $(i,2c_i,k'')$ for some $k''$.

Case 2: Suppose otherwise that both of the parents of the vertex $v$ have integer label bigger than $t_i+\kappa_C$. Then let vertex $v'$ be one of the parents of the vertex $v$ and note that $v'<v$ by the fact that vertex $v'$ must have arrived before vertex $v$ does so that $v'$ can be selected as a parent of vertex $v$. Since vertex $v$ has a directed path to vertex $v'$, in order to prove the first statement under the consideration of case 2, we can instead prove vertex $v'$ has a directed path to an element in $\{(i,2c_i,1),(i,2c_i,2),$\\$...,(i,2c_i,c_i)\}$. We can repeat this process until we find a parent with integer label $v''$ such that $v''\leq t_i+\kappa_C$ and use the proof for case 1. This iteration must end because the integer label for a vertex is always bigger than the integer label of any parent of this vertex. Hence the first statement of Lemma \ref{temp3} is true.

We now prove the second statement. By the first statement, it suffices to show that for any value of $k$ and any vertex with additional label $(i,2c_i,k')$, vertex $(i,2c_i,k')$ has a directed path to the vertex $(i,1,k)$. We will use the intuition we introduced in Figure \ref{stepC2'},\ref{stepC2} and \ref{stepC1}. By equations (\ref{Xselect}) and (\ref{Yselect}), $(i,2c_i,k)$ has a directed path to vertex $(i,2c_i-k'+1,1)$. Since $k,k'\leq c_i$, $2c_i-k'+1>k$vertex and $(i,2c_i-k'+1,1)$ has a directed path to vertex $(i,k,1)$ which further has a directed path to vertex $(i,1,k)$. Hence the second statement of Lemma \ref{temp3} is true.
\end{proof}

By Lemma \ref{temp3}, in the case where $\mathcal{B}_i$ happens, all rays are joined to each other within this interchange structure in the sense of equivalence of rays. For an event $\mathcal{B}_i$ we define the following sets of vertices using variables defined in steps A,B and C:
\bee 
\mathcal{V}_i^-:&=&\mathcal{V}(t_i+\kappa_B), \text{ which is the set of vertices included in the DAG at } t_i+\kappa_B\\
\mathcal{V}_i^0:&=&\{ v| \text{vertex } v \text{ that has an additional label in } \{(i,1,1),(i,1,2),...,(i,1,c_i)\}\} \\
\mathcal{V}_i^+:&=& \{v|v> t_i+\kappa_C, v\in\mathbb{N}\}
\eee 

We now state Lemma \ref{temp4} which is central to prove one-endedness.
\begin{lem} \label{temp4}
Given $\mathcal{B}_i$ occurs, for any $v^-\in \mathcal{V}_i^-$ and $v^{+}\in \mathcal{V}_i^+$, there exist a directed path from $v^+$ to $v^-$ in the graph $\cup_t^{\infty}\mathcal{G}(t)$.
\end{lem}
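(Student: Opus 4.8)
The plan is to obtain the path from $v^+$ to $v^-$ by chaining the two directed paths supplied by Lemmas \ref{temp3} and \ref{temp2}, using the label set $\mathcal{V}_i^0=\{(i,1,1),(i,1,2),\dots,(i,1,c_i)\}$ as the meeting point. Recall that by Definition \ref{additionallabel} this set is precisely $\mathcal{F}(t_i+\kappa_B)\cup\mathcal{V}'(t_i+\kappa_B)$, and that it plays the role of ``input port'' of the interchange structure. The argument is entirely deterministic once we condition on $\mathcal{B}_i$, so no probability enters; the only care needed is bookkeeping with the labels and with the fact that all paths must live in the single graph $\cup_t^{\infty}\mathcal{G}(t)$, which is legitimate because $\mathcal{G}(t_n)\subseteq\mathcal{G}(t_m)$ for $n<m$ and edges are never removed.

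First I would fix $v^{-}\in\mathcal{V}_i^{-}=\mathcal{V}(t_i+\kappa_B)$ and $v^{+}\in\mathcal{V}_i^{+}$, and split into two cases according to whether $v^{-}$ is a free tip at time $t_i+\kappa_B$. In Case 1, $v^{-}\in\mathcal{F}(t_i+\kappa_B)$: by equation \eqref{overbracexi} the free tips at $t_i+\kappa_B$ carry the labels $(i,1,1),\dots,(i,1,F(t_i+\kappa_B))$ and $F(t_i+\kappa_B)\le c_i$, so $v^{-}$ has an additional label $(i,1,k)$ with $1\le k\le c_i$, i.e.\ $v^{-}\in\mathcal{V}_i^{0}$. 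Since $v^{+}>t_i+\kappa_C$, the second statement of Lemma \ref{temp3} applied to $v^{+}$ yields a directed path from $v^{+}$ to $v^{-}$ directly.

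In Case 2, $v^{-}\in\mathcal{V}(t_i+\kappa_B)\setminus\mathcal{F}(t_i+\kappa_B)$: apply Lemma \ref{temp2} with any fixed $t_n>t_i+\kappa_B+h_M$ to produce a vertex $w$ with additional label in $\{(i,1,1),\dots,(i,1,c_i)\}=\mathcal{V}_i^{0}$ from which $v^{-}$ is reachable, that is, a directed path from $w$ to $v^{-}$. Here $w$ may belong to $\mathcal{V}'(t_i+\kappa_B)$ and so not yet have completed its POW at $t_i+\kappa_B$, but this is immaterial: its outgoing edges are created once the POW finishes and therefore lie in $\cup_t^{\infty}\mathcal{G}(t)$, and likewise the whole path from $w$ to $v^{-}$ lies there. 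Again by the second statement of Lemma \ref{temp3}, $v^{+}$ has a directed path to $w$. Concatenating the path $v^{+}\to w$ with the path $w\to v^{-}$ gives the required directed path from $v^{+}$ to $v^{-}$. Since $\mathcal{V}(t_i+\kappa_B)$ is the disjoint union of $\mathcal{F}(t_i+\kappa_B)$ and $\mathcal{V}(t_i+\kappa_B)\setminus\mathcal{F}(t_i+\kappa_B)$, the two cases are exhaustive.

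I do not expect a serious obstacle: the substance of the lemma has already been packaged into Lemmas \ref{temp2} and \ref{temp3}, and what remains is a routine concatenation. The points requiring a little attention are (i) confirming that the elements of $\mathcal{F}(t_i+\kappa_B)$ receive labels $(i,1,k)$ with $k\le c_i$ so that Lemma \ref{temp3}(2) genuinely covers them, and (ii) making sure every path quoted from Lemmas \ref{temp2} and \ref{temp3} is read inside $\cup_t^{\infty}\mathcal{G}(t)$ rather than in some finite snapshot; both are immediate from the monotonicity of $(\mathcal{G}(t_n))_{n}$ and the definitions of the additional labelling.
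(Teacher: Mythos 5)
Your proposal is correct and follows essentially the same argument as the paper: split on whether $v^-$ is a free tip at $t_i+\kappa_B$, use Lemma \ref{temp2} to reach $v^-$ from a vertex in $\mathcal{V}_i^0$ in the non-free-tip case, and use Lemma \ref{temp3} to connect $v^+$ to $\mathcal{V}_i^0$, then concatenate. The extra bookkeeping you supply about labels and monotonicity of $(\mathcal{G}(t_n))_n$ is consistent with the paper's (terser) proof.
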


\begin{proof}
Recall that $\mathcal{F}(t)$ is the set of free tips at time $t$. Suppose vertex $v^-$ in $\mathcal{V}_i^-$. If $v^-\in\mathcal{F}(t_i+\kappa_B)$, then by Definition \ref{additionallabel}, there exists $v^0\in \mathcal{V}_i^0$ such that $v^0=v^-$. Else $v^-\in\mathcal{V}(t_i+\kappa_B)\setminus \mathcal{F}(t_i+\kappa_B)$, then by Lemma \ref{temp2} there exist at least one vertex $v^0\in\mathcal{V}_i^0$ such that $v^-$ is reachable from $v^0$. Secondly, from Lemma \ref{temp3}, all vertices in $\mathcal{V}_i^0$ are reachable from all the vertices in $\mathcal{V}_i^+$. Hence Lemma \ref{temp4} follows.
\end{proof}

Recall that $\mathcal{S}_*$ denote the space of connected DAGs rooted at vertex $0$ with all vertices having finite degrees and $d_*(\mathcal{G}_1,\mathcal{G}_2):=(r+1)^{-1}$ where $r$ is the biggest integer such that the two r-balls rooted at vertex $0$ in $\mathcal{G}_1$ and $\mathcal{G}_2$ are identical. Using Lemma \ref{temp4}, we provide Lemma \ref{cauchy1} which will be used to proof that the limit of the sequence $(\mathcal{G}(t_n))_{n\in\mathbb{N}}$ exists in the metric space $(\mathcal{S}_*,d_*)$. As a reminder, $\mathcal{L}(t)$ is the set of tips which includes all the vertices that have in-degree $0$.
\begin{lem}\label{cauchy1}
Suppose the bottleneck event $\mathcal{B}_i$ happens. For any $t\geq t_i+\kappa_C-\epsilon_{max}-h_M$, we have that $\mathcal{L}(t_i)\cap\mathcal{L}(t)=\emptyset$.  
\end{lem}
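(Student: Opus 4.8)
The plan is to prove the slightly stronger assertion that every vertex which is a tip at time $t_i$ acquires a solid incoming edge no later than time $t_i+\kappa_C-\epsilon_{max}-h_M$. Since the update rule only ever adds edges to $\mathcal{E}$, such a vertex then has positive in-degree in $\mathcal{E}(t)$ -- hence is not a tip -- for every $t\ge t_i+\kappa_C-\epsilon_{max}-h_M$, and since the vertex lies in $\mathcal{L}(t_i)$ this yields $\mathcal{L}(t_i)\cap\mathcal{L}(t)=\emptyset$. So I would fix $v\in\mathcal{L}(t_i)$ and, using $\mathcal{L}(t_i)=\mathcal{W}(t_i)\sqcup\mathcal{F}(t_i)$, treat the pending and free cases separately, in each case exhibiting an arrival that selects $v$ and bounding the time at which the corresponding edge becomes solid.

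In the pending case $v\in\mathcal{W}(t_i)$ the bound is immediate: by definition of $\mathcal{W}$ there is a dashed edge $(j,v)\in\mathcal{E}'(t_i)$ with $j\in\mathcal{V}'(t_i)$, so $j\le i-1$ and $\Theta_j\le h_M$, whence the POW of $j$ finishes by time $i-1+h_M$ and $(j,v)$ becomes solid then; the inequality $i-1+h_M\le t_i+\kappa_C-\epsilon_{max}-h_M$ is clear from (\ref{kappac}). In the free case $v\in\mathcal{F}(t_i)$ I would split according to whether $v$ is selected as a parent by some vertex arriving in $[t_i,t_i+\kappa_B)$. If it is, that vertex has POW duration $h_M$ by (\ref{stepA1}) and (\ref{stepBrule1}), so the edge into $v$ is solid by $(t_i+\kappa_B-1)+h_M$, comfortably inside the window. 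If it is not, then $v$ survives unselected through Steps A and B and so is still a free tip at $t_i+\kappa_B$, i.e. $v\in\mathcal{F}(t_i+\kappa_B)\subseteq\mathcal{V}_i^0$, and by Definition \ref{additionallabel} it carries an additional label $(i,1,k)$ with $1\le k\le c_i$. Here I would invoke event $C_i$: by (\ref{Xselect})--(\ref{Yselect}) each of $(i,1,1),\dots,(i,1,c_i)$ is chosen as a parent by at least one of the vertices $(i,2,1),\dots,(i,2,c_i)$, which by (\ref{renamexi}) arrive by time $\xi(i,2,c_i)=t_i+\kappa_B-1+c_i$ and, by (\ref{stepcrule1}), have POW duration $h_M$; hence $v$ has a solid incoming edge by time $t_i+\kappa_B-1+c_i+h_M$.

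Finally I would check that the largest of these three covering times, $t_i+\kappa_B-1+c_i+h_M$, does not exceed $t_i+\kappa_C-\epsilon_{max}-h_M$; substituting (\ref{kappac}) this reduces to the elementary inequality $c_i+h_M\le 2(b+M\kappa_B+h_M)^2+2$, which follows from the bound $c_i\le b+M\kappa_B+h_M$ in (\ref{cibound}) together with $h_M\le b+M\kappa_B+h_M$ and $b+M\kappa_B+h_M\ge1$. The one step I expect to require care is the free-tip bookkeeping, namely verifying cleanly that a free tip of $t_i$ which is never chosen during Steps A and B is exactly a vertex carrying an $(i,1,k)$-label, so that the interchange wiring of Step C is guaranteed to cover it. This is the step that ties $\mathcal{L}(t_i)$ to the bottleneck construction and that pins down the precise time window $[t_i+\kappa_C-\epsilon_{max}-h_M,\infty)$ in the statement.
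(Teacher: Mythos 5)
Your proposal is correct and uses essentially the same argument as the paper: the paper also shows every tip of $\mathcal{L}(t_i)$ is attached well before $t_i+\kappa_C-\epsilon_{max}-h_M$, splitting into pending versus free tips and using the fact that under $C_i$ the vertices labeled $(i,1,k)$ are all selected within the first $c_i$ arrivals of Step C, together with the timing bounds from (\ref{cibound}) and (\ref{kappac}). The only cosmetic difference is that the paper argues by contradiction and performs the free/pending case split at time $t_i+\kappa_B$ (after reducing via the fact that a vertex never becomes a tip again), whereas you split at $t_i$ and treat selection during Steps A--B explicitly; the content is the same.
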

\begin{proof}
Suppose for contradiction we have a $t\geq t_i+\kappa_C-\epsilon_{max}-h_M$ and a vertex $v$ such that $v\in\mathcal{L}(t_i)\cap\mathcal{L}(t)$. Since a vertex will never become a tip again once it cease to be a tip, together with the fact that $\kappa_C-\kappa_B>\epsilon_{max}+h_M$ by equation (\ref{kappac}), we get $v\in \mathcal{L}(t_i+\kappa_B)$. Since the set of tips consist of free tips and pending tips, i.e. $\mathcal{L}(t_i+\kappa_B)=\mathcal{F}(t_i+\kappa_B)\cup\mathcal{W}(t_i+\kappa_B)$, we will consider two cases depending on whether $v$ is a free tip at $t_i+\kappa_B$ or a pending tip to derive contradiction.

Case 1: if $v\in\mathcal{F}(t_i+\kappa_B)$, then by equations (\ref{Xselect}) and (\ref{Yselect}) we have that  $v$ is chosen as parent at some time within the interval $[t_i+\kappa_B,t_i+\kappa_B+c_i-1]$ and become a pending tip at the next step. Since maximum duration for a POW is $h_M$, vertex $v$ must cease to be a pending tip at some time within 
$[t_i+\kappa_B+h_M+1,t_i+\kappa_B+c_i+h_M]$. By equation (\ref{kappac}), $t_i+\kappa_B+c_i+h_M<t_i+\kappa_C-\epsilon_{max}-h_M$, and hence $v$ has cease to be a tip before time $t_i+\kappa_C-\epsilon_{max}-h_M$ and it can never be a tip again. Therefore $v\notin\mathcal{L}(t_i+\kappa_C-\epsilon_{max}-h_M)$ and $v\notin\mathcal{L}(t)$, which leads to contradiction. 

Case 2: if $v\in\mathcal{W}(t_i+\kappa_B)$, then vertex $v$ has been selected as parent and it will get attached to no later than $t_i+\kappa_B+h_M$. Using the same process in case 1, we have $v\notin \mathcal{L}(t)$, which leads to contradiction.
\end{proof}

Now that we have constructed the bottleneck events, we will establish that the bottleneck events happen infinitely often. Recall that $\Sigma_i$ denotes the $\sigma$-algebra generated by all the random variables $\Theta_k,\epsilon_k,X_k$ and $Y_k$ for $k=1,2,...,i$. Based on lemma \ref{L<b} and the construction of step A, B and C, we will show that $\mathcal{B}_i$ happens infinitely often using the following idea: At each step there are at most $M$ vertices whose POWs are completed there for the number of tips can increase by at most M at each step. At the same time, each of these $M$ vertices, whose POWs have just been finished, can cause at most 2 vertices to cease to be a tip. Therefore, given $\{L(t_i)\leq b\}$ occurs where $L(t_i)$ is the number of tips at time $t_i$, the number of tips is bounded above by $b+ 2M\max(|\epsilon_{max},\kappa_C-\epsilon_{min}|)<b+2M\kappa_C$ within the interval $(t_i-\epsilon_{max},t_i+\kappa_C-\epsilon_{min}]$ and hence the probability of selecting a specific tip as a parent of a vertices that arrives within $[t_i,t_i+\kappa]$ is also bounded below by a positive constant $1/(b+2\kappa_C M)$. Thus for the bottleneck event $\mathcal{B}_i:=\{L(t_i\leq b)\}\cup A_i\cup B_i \cup C_i$, which is defined through equations (\ref{stepA1}), (\ref{stepA}), (\ref{stepBrule1}), (\ref{stepBrule}), (\ref{stepcrule1}), (\ref{Xselect}) and (\ref{Yselect}), is a union of sets that can be written in the form $\{L(t_i)\leq b\}\cap(\cap_{j=i}^{\kappa_C}\{\Theta_j=\theta_j,\epsilon_j=e_j,X_j=x_j,Y_j=y_j\})$. Hence given an event $D_{i-1}$ such that $D_{i-1}\in \Sigma_{i-1}$ and $D_{i-1}\subseteq \{L(t_i)\leq b\}$, by equation (\ref{evolutiondistribution}), we have that $P(\mathcal{B}_i|D_{i-1})>0$ implies
\be 
P(\mathcal{B}_i|D_{i-1})&\geq& \left(\frac{\min\{p_{\Theta,j}|j=1,2,...,M\}\times\min\{p_{\epsilon,j}|j\in I_\epsilon\}}{(b+2\kappa_C M)}\right)^{\kappa_C}=:\rho(\kappa_C)\label{pkappa}
\ee
where $p_{\Theta,j}=P(\Theta_k=h_j)$ for any $k$, $I_{\epsilon}$ denotes the set of possible values of $\epsilon_k$ and $p_{\epsilon,j}=P(\epsilon_k=j)$ for $j\in I_\epsilon$. Since $P(\mathcal{B}_i|D_{i-1})>0$ by Lemmas \ref{conditionA}, \ref{conditionB} and \ref{conditionC}, $P(\mathcal{B}_i|D_{i-1})\geq \rho(\kappa_C)$. This fact provides sufficient condition to show that the bottleneck events happen infinitely often as described in Lemma \ref{Binfinitelyoften}. $\mathcal{B}_i$ is a special event that forces the branches in the DAG stick together again, the fact that $\mathcal{B}_i$ happens infinitely often is central to prove theorem \ref{mainresult}.
\begin{lem}\label{Binfinitelyoften}
The sequence of events $\{\mathcal{B}_i\}_{i=1}^\infty$  happens infinitely often.
\end{lem}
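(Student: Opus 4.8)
The plan is to run a conditional (second) Borel--Cantelli argument along a subsequence of widely separated ``attempt times'', exploiting that $\mathcal{A}_i=\{L(t_i)\le b\}$ occurs infinitely often (Lemma~\ref{L<b}) together with the uniform lower bound $\rho(\kappa_C)$ of (\ref{pkappa}) on the conditional probability of a bottleneck event.

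First I would pin down the relevant measurability. As used in the proof of Lemma~\ref{L<b}, $F(t_i)\in\Sigma_{i-1}$, and the same accounting shows $W(t_i)\in\Sigma_{i-1}$, so $\mathcal{A}_i=\{L(t_i)\le b\}\in\Sigma_{i-1}$; moreover $\mathcal{B}_i$ is defined through equations (\ref{stepA1})--(\ref{Yselect}) in terms of $\Theta_j,\epsilon_j,X_j,Y_j$ with $j\le i+\kappa_C$, hence $\mathcal{B}_i\in\Sigma_{i+\kappa_C}$. Then I would define stopping times recursively by $\tau_0:=0$ and, for $k\ge1$,
\[
\tau_k:=\inf\{\,i>\tau_{k-1}+\kappa_C:\ L(t_i)\le b\,\},
\]
so that the windows $[\tau_k,\tau_k+\kappa_C]$ are pairwise disjoint; by Lemma~\ref{L<b} each $\tau_k$ is almost surely finite. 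Since $\{\tau_k=i\}$ is determined by the occurrences of $\mathcal{A}_j$ for $j\le i$ (and by $\tau_{k-1}$, which is measurable with respect to strictly earlier information) and each $\mathcal{A}_j\in\Sigma_{j-1}\subseteq\Sigma_{i-1}$, we get $\{\tau_k=i\}\in\Sigma_{i-1}$ with $\{\tau_k=i\}\subseteq\mathcal{A}_i$.

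Next I would reformulate (\ref{pkappa}) as a conditional-expectation bound: the assertion that $\P(\mathcal{B}_i\mid D_{i-1})\ge\rho(\kappa_C)$ for every $D_{i-1}\in\Sigma_{i-1}$ with $D_{i-1}\subseteq\mathcal{A}_i$ is precisely the assertion that $\E[\mathbf{1}_{\mathcal{B}_i}\mid\Sigma_{i-1}]\ge\rho(\kappa_C)$ almost surely on $\mathcal{A}_i$ (take $D_{i-1}=\mathcal{A}_i\cap\{\E[\mathbf{1}_{\mathcal{B}_i}\mid\Sigma_{i-1}]<\rho(\kappa_C)-\varepsilon\}$ to rule out a violation). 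Write $\mathcal{H}_k:=\Sigma_{\tau_k+\kappa_C}$ for the information available at the end of the $k$-th window; since $\tau_k\ge\tau_{k-1}+\kappa_C+1$ we have $\mathcal{H}_{k-1}\subseteq\Sigma_{\tau_k-1}$, the filtration $(\mathcal{H}_k)$ is increasing, and $\mathcal{B}_{\tau_k}\in\mathcal{H}_k$. Evaluating on the $\Sigma_{i-1}$-measurable events $\{\tau_k=i\}\subseteq\mathcal{A}_i$ and using the tower property,
\[
\P\bigl(\mathcal{B}_{\tau_k}\mid\mathcal{H}_{k-1}\bigr)=\E\!\left[\,\E[\mathbf{1}_{\mathcal{B}_{\tau_k}}\mid\Sigma_{\tau_k-1}]\,\middle|\,\mathcal{H}_{k-1}\,\right]\ge\rho(\kappa_C)\quad\text{a.s.}
\]
Hence $\sum_{k\ge1}\P(\mathcal{B}_{\tau_k}\mid\mathcal{H}_{k-1})=\infty$ almost surely, and L\'evy's extension of the Borel--Cantelli lemma yields that $\mathcal{B}_{\tau_k}$ occurs for infinitely many $k$ almost surely; since $(\mathcal{B}_{\tau_k})_k$ is a subsequence of $(\mathcal{B}_i)_i$, the latter occurs infinitely often almost surely.

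The analytic content here is light; the work is organizational. The step I expect to require the most care is the stopping-time bookkeeping: arranging the attempt windows to be disjoint so that $(\mathcal{H}_k)$ is genuinely a filtration with $\mathcal{B}_{\tau_k}$ adapted, verifying $\{\tau_k=i\}\in\Sigma_{i-1}$ so that (\ref{pkappa}) applies legitimately on that event, and passing cleanly from the ``for all $D_{i-1}$'' phrasing of (\ref{pkappa}) to the almost-sure bound $\E[\mathbf{1}_{\mathcal{B}_i}\mid\Sigma_{i-1}]\ge\rho(\kappa_C)$ on $\mathcal{A}_i$. Once these are in place, the conclusion is immediate.
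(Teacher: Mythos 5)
Your proposal is correct and follows essentially the same route as the paper: select a subsequence of attempt times, separated by more than $\kappa_C$, at which $\{L(t_i)\le b\}$ holds (the paper's $S_j$, your $\tau_k$), apply the uniform conditional lower bound $\rho(\kappa_C)$ from (\ref{pkappa}) on each such window, and conclude that the bottleneck events along this subsequence occur infinitely often via a conditional Borel--Cantelli argument (the paper bounds $\prod(1-\rho(\kappa_C))=0$ directly, while you invoke L\'evy's extension --- the same mechanism, with your stopping-time and filtration bookkeeping being a somewhat more careful formalization of the paper's conditioning on $(Z_j,S_j)$).
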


\begin{proof}
Let $\mathcal{A}_i=\{L(t_i)\leq b\}$ as mentioned in lemma \ref{L<b} with $b$ a constant in $(10 h_M-6h_1+3M\epsilon_{max} +2,\infty)$. The idea is that given $\mathcal{A}_i$ occurs the probability of event $\mathcal{B}_i$ happening is bounded below by $\rho(\kappa_C)$ as defined in equation (\ref{pkappa}), and together with the Lemma \ref{L<b} saying that $\{\mathcal{A}_i\}_{i=1}^\infty$ happen infinitely often we can prove $\{\mathcal{B}_i\}_{i=1}^\infty$ happen infinitely often.\\
Recall that $\Sigma_i$ denotes the $\sigma$-algebra generated by all the random variables $\Theta_k,\epsilon_k,X_k$ and $Y_k$ for $k=1,2,...,i$. Note that $L(t_i)$ is $\Sigma_{i-1}$ measurable by equations (\ref{evov}) and (\ref{evoe}). 

By equation (\ref{pkappa}), for any event $D_{i-1}$ such that $D_{i-1}\in \Sigma_{i-1}$ and $D_{i-1}\subseteq \{L(t_i)\leq b\}$,
\be 
P(\mathcal{B}_i|D_{i-1})\geq \rho(\kappa_C)\label{lowerboundBi}
\ee

This means that as long as we know $\mathcal{A}_i:=\{L(t_i\leq b)\}$ happens, no matter what happened before time $t_i$, the conditional probability of $\mathcal{B}_i$ will be always bounded below by $\rho(\kappa_C)$. Note that $\mathcal{B}_i$ is not $\mathcal{F}_{i}$ measurable since the bottleneck event covers the interval $[t_i,t_i+\kappa_C]$.

Since $\mathcal{A}_i$ happens infinitely often by Lemma \ref{L<b}, we can define:
\bee 
S_1&:=&\min \{i|\mathcal{A}_{i} \text{ happens } \}\label{s1}\\
S_j&:=&\min \{i|\mathcal{A}_{i} \text{ happens and } i-s_{j-1}>2\kappa_C\} \quad\quad j=2,3,4,...\label{sj}
\eee 
i.e., there exist a sequence of time $t_{S_1},t_{S_2},...$ such that these $A_{S_j}$ occur and there is sufficient time between each of them. Figure \ref{si} provide a demonstration.

\begin{figure}[H]
\begin{tikzpicture}
    \draw[ultra thick, ->] (0,0) -- (10,0);
    
    \foreach \x in {1,2,3,4,5,6,7,8,9}
    \draw (\x cm,3pt) -- (\x cm,-3pt);

    \node at (1,0.5) {$t_1$};
    \node at (2,0.5) {$t_2$};
    \node at (3,0.5) {$t_3$};
    \node at (4,0.5) {$t_4$};
    \node at (5,0.5) {$t_5$};
    \node at (6,0.5) {$t_6$};
    \node at (7,0.5) {$t_7$};
    \node at (8,0.5) {$t_8$};
    \node at (9,0.5) {$t_9$};

    \node (01)at (1,-0.5) {$\mathcal{A}_1$};
    \node at (2,-0.5) {$\mathcal{A}_2^c$};
    \node at (3,-0.5) {$\mathcal{A}_3$};
    \node (02) at (4,-0.5) {$\mathcal{A}_4$};
    \node at (5,-0.5) {$\mathcal{A}_5^c$};
    \node at (6,-0.5) {$\mathcal{A}_6^c$};
    \node at (7,-0.5) {$\mathcal{A}_7^c$};
    \node at (8,-0.5) {$\mathcal{A}_8^c$};
    \node (03) at (9,-0.5) {$\mathcal{A}_9$};

    \node (11) at (1,-1.5) {$S_1=1$};
    \node (12) at (4,-1.5) {$S_2=4$};
    \node (13) at (9,-1.5) {$S_3=9$};

    \node (21) at (1,-2.5) {$Z_1=?$};
    \node (22) at (4,-2.5) {$Z_2=?$};
    \node (23) at (9,-2.5) {$Z_3=?$};

    \graph{(01)->(11),(02)->(12),(03)->(13),
    (11)->(21),(12)->(22),(13)->(23)};

\end{tikzpicture}
\caption{An example of the definition of $s_j$ with $\kappa_C=1$. The situation about whether $\mathcal{A}_i$ happens is shown, for example, $\mathcal{A}_1$ is displayed showing that $\mathcal{A}_1$ is true while $\mathcal{A}_2^c$ is displayed showing that $\mathcal{A}_2$ is false. Then by equations (\ref{s1}) and (\ref{sj}) we can calculate the values of $S_j$ as shown in the figure. The definition of random variable $Z_j$ is that $\{Z_j=1\}\iff \{ \mathcal{B}_{S_j} \text{ occurs}\}$. Our goal is to focus on some of the times when $\mathcal{A}_i$ occurs while the times stay sufficiently far away from each other. The distance requirement makes sure that the events $(\mathcal{B}_{S_j})_{i=1}^\infty$ do not have intersection interval and hence equation (\ref{lowerboundBi}) can be applied.}\label{si}
\end{figure}
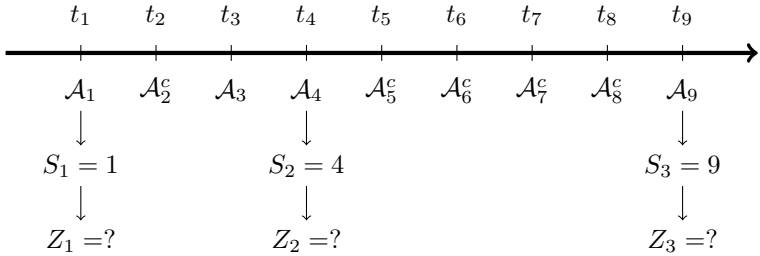

Define the random variable $Z_j$ as: $\{Z_j=1\}\iff \{ \mathcal{B}_{S_j} \text{ occurs}\}$. Note that for any $j$, when we talk about $\mathcal{B}_{S_j}$ we already have that $A_{S_j}$ occurs. By equation (\ref{lowerboundBi}),
\bee 
&&\P(Z_j=1|Z_{j-1},Z_{j-2},...Z_1,S_1,S_2,...,S_j)\geq\rho(\kappa_C)
\eee
Denote $\vec{Z}_j:=(Z_1,Z_2,...,Z_j)$, $\vec{S_j}:=(S_1,S_2,...,S_j)$ and $\vec{s_j}:=(s_1,s_2,...,s_j)$. We have
\bee
&&\P(Z_j=1|Z_{j-1},Z_{j-2},...Z_1)=\P(Z_j|\vec{Z}_{j-1})\\
&&=\sum_{\vec{s}_{j-1}}\P(Z_j=1|\vec{Z}_{j-1},\vec{S}_{j-1}=\vec{s}_{j-1})\P(\vec{S}_{j-1}=\vec{s}_{j-1}|\vec{Z}_{j-1})\geq\rho(\kappa_C)
\eee 

Hence 
\bee 
&&P(\cap_{j=m}^{\infty} \{Z_j\neq 1\}) \leq \prod_{j=m}^{\infty} (1-\rho(\kappa_C))=0\\
&&\implies P(\cup_{m=1}^{\infty}\cap_{j=m}^{\infty} \{Z_j\neq 1\}) =0
\eee 

Hence $\{Z_j=1\}$ happens infinitely often and hence $\mathcal{B}_i$ happens infinitely often.

\end{proof}

Recall that $\mathcal{S}_*$ denote the space of connected DAGs rooted at vertex $0$ with all vertices having finite degrees and $d_*(\mathcal{G}_1,\mathcal{G}_2):=(r+1)^{-1}$ where $r$ is the biggest integer such that the two r-balls rooted at vertex $0$ in $\mathcal{G}_1$ and $\mathcal{G}_2$ are identical. Using Lemmas \ref{cauchy1} and \ref{Binfinitelyoften}, we now establish the part about existence of limit stated in Theorem  \ref{mainresult}.
\begin{lem}\label{temp5}
In the metric space $(\mathcal{S}_*,d_*)$, the sequence $(\mathcal{G}(t_i))_{i\in\mathbb{N}}$ is almost surely Cauchy and hence the limit exist and equals to $\cup_{i=1}^{\infty} \mathcal{G}(t_i)$.
\end{lem}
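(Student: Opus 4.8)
The plan is to rely on two structural facts about the model. First, the sequence is nested: $\mathcal{G}(t_i)\subseteq\mathcal{G}(t_j)$ for $i\le j$ by construction. Second, every vertex of $\mathcal{G}_\cup:=\cup_{i=1}^\infty\mathcal{G}(t_i)$ has degree bounded by a constant depending only on $h_M$ and $\epsilon_{max}$. The out-degree is at most $2$ (two parents). For the in-degree, a vertex $v$ can be selected as a parent only while it is a tip; once its first selector $w$ has arrived, $v$ ceases to be a tip no later than $h_M$ steps afterwards, and allowing for the observation lag (bounded by $\epsilon_{max}$) at most $h_M+\epsilon_{max}+1$ distinct vertices ever select $v$, so $\deg_{\mathrm{in}}(v)\le h_M+\epsilon_{max}+1$. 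Hence the ball $B_r(0,\mathcal{G})$ of radius $r$ about the root $0$ has cardinality bounded by a constant depending only on $r,h_M,\epsilon_{max}$, uniformly over $\mathcal{G}\in\{\mathcal{G}(t_n):n\in\mathbb{N}\}\cup\{\mathcal{G}_\cup\}$.

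Given this, Cauchyness is soft. Fix $r$. Nestedness makes $(B_r(0,\mathcal{G}(t_n)))_{n}$ a non-decreasing sequence of finite rooted subgraphs of uniformly bounded size, hence eventually constant: there is $N_r$ with $B_r(0,\mathcal{G}(t_n))=B_r(0,\mathcal{G}(t_{N_r}))$ for all $n\ge N_r$. So for $m,n\ge N_r$ the $r$-balls coincide and $d_*(\mathcal{G}(t_m),\mathcal{G}(t_n))\le(r+1)^{-1}$; letting $r\to\infty$ shows $(\mathcal{G}(t_n))_n$ is Cauchy. This stabilisation is exactly what Lemmas~\ref{cauchy1} and~\ref{Binfinitelyoften} say probabilistically: since $\mathcal{B}_i$ occurs infinitely often and each occurrence removes, permanently, every tip present at time $t_i$, the leaves (and the edges created from them) eventually exit any fixed finite neighbourhood of $0$, so the local picture around $0$ freezes. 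By completeness of $(\mathcal{S}_*,d_*)$ (recorded from \cite{Da07}) the limit $\mathcal{G}(\infty):=\lim_n\mathcal{G}(t_n)$ exists in $\mathcal{S}_*$.

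It remains to identify $\mathcal{G}(\infty)=\mathcal{G}_\cup$. The family $(\mathcal{G}(t_i))_i$ is nested and each member is connected and rooted at $0$, so $\mathcal{G}_\cup$ is connected and rooted at $0$, and by the degree bound all its degrees are finite, so $\mathcal{G}_\cup\in\mathcal{S}_*$. For each $r$, the ball $B_r(0,\mathcal{G}_\cup)$ is finite and every edge and every distance witnessed inside it already appears in some $\mathcal{G}(t_i)$; consequently $B_r(0,\mathcal{G}_\cup)=\bigcup_i B_r(0,\mathcal{G}(t_i))=B_r(0,\mathcal{G}(t_{N_r}))$, whence $d_*(\mathcal{G}(t_n),\mathcal{G}_\cup)\le(r+1)^{-1}$ for $n\ge N_r$ and $\mathcal{G}(t_n)\to\mathcal{G}_\cup$. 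Uniqueness of limits in $(\mathcal{S}_*,d_*)$ forces $\mathcal{G}(\infty)=\mathcal{G}_\cup$.

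The step I expect to be the main obstacle is pinning the uniform finiteness of the balls — in particular the in-degree count, with its boundary effects from the lag $\epsilon_n$ and from the root vertex $0$ — together with the dual bookkeeping that the $r$-ball of $\mathcal{G}_\cup$ is exactly the eventually constant value of the balls $B_r(0,\mathcal{G}(t_n))$; everything else is routine metric-space topology.
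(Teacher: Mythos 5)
Your proof is correct, but it takes a genuinely different route from the paper's. The paper establishes Cauchyness probabilistically, through the bottleneck machinery: by Lemma \ref{Binfinitelyoften} the events $\mathcal{B}_{s_1},\mathcal{B}_{s_2},\dots$ occur almost surely along an infinite sequence of well-separated times, and by Lemma \ref{cauchy1} each occurrence permanently retires every tip present at the start of that bottleneck, so the minimal distance from the root to any tip increases by at least one per bottleneck; hence for every $r$ the $r$-ball around $0$ is eventually never touched again, almost surely, after which the limit is identified with $\cup_{i=1}^{\infty}\mathcal{G}(t_i)$ much as you do. You instead bypass the bottleneck events entirely: the sure bounds $\Theta_i\le h_M$ and $\epsilon_i\le\epsilon_{max}$ give a sure in-degree bound (your count $h_M+\epsilon_{max}+1$, which accounts for the observation lag, is in fact more careful than the bound $h_M$ asserted in the paper's opening paragraph of the proof), hence uniformly bounded $r$-balls; nestedness makes the balls monotone under inclusion; and a monotone, uniformly bounded sequence of finite rooted graphs must stabilise. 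This argument is more elementary and self-contained, it is purely deterministic --- so you obtain the Cauchy property surely rather than merely almost surely --- and it has no logical dependence on Lemmas \ref{cauchy1} and \ref{Binfinitelyoften}; your remark that the stabilisation is what those lemmas express probabilistically is only a heuristic gloss. What the paper's route buys is economy in context (it reuses machinery already built for one-endedness) and a quantitative handle on how the stabilisation radius grows along the bottleneck times, but neither is needed for the statement of Lemma \ref{temp5}.
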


\begin{proof}
First, given any time $t$, any vertex $v$ in the graph $\mathcal{G}(t)$ has finite degree and hence $\mathcal{G}(t)\in (\mathcal{S}_*,d_*)$ because : 1), it can has almost 2 parents which means it's out-degree is at most 2; 2),  it can has at most $h_M$ in-degree because once vertex $v$ is selected as a parent for the first time by a vertex $v'$, there are only $h_M$ steps left before the POW corresponding to $v'$ is finished and then vertex $v$ will be attached by $v'$ which means vertex $v$ can no longer be selected as parent. Furthermore, because of this, $\cup_{i=1}^{\infty}\mathcal{G}(t_i)$ has finite degree and $\cup_{i=1}^{\infty}\mathcal{G}(t_i)\in (\mathcal{S}_*,d_*)$.

Next we show that the limit exist. As established in \cite{Da07}, the metric space $(\mathcal{S}_*,d_*)$ is separable and complete, i.e. a Polish space. To show the existence of the limit, it suffices to show that the sequence $(\mathcal{G}(t_i))_{i\in\mathbb{N}}$ is Cauchy.

Recall $\mathcal{V}(t)$ is the set of vertices at time $t$. We define the function $d(t,v_1,v_2)$ such that it gives the shortest path distance from vertex $v_1$ to $v_2$ in the graph $\mathcal{G}(t)$. Notice that for any path from vertex $v$ to $v''$ in $\cup_{i=1}^{\infty}\mathcal{G}(t_i)$, it has the form $(v_0=v,v_1,v_2,...,v_{k-1},v_{k}=v'')$ where $v_{j}$ is a parent of $v_{j-1}$ for all $j=1,2,...,k$ and $v_{j}$ is a vertex in the graph $\mathcal{G}$ by the time $v_{j-1}$ arrives to select parents. Therefore, any path from vertex $v\in\mathbb{N}$ to vertex $v'$ is already included in the graph $\mathcal{G}(t_{v})$ and so is the shortest path from $v$ to $v'$. Hence we can use the notation $d(v_1,v_2)$ instead.

Define the function $D(t_1,t_2)$ which gives the biggest integer such that the two r-balls rooted at vertex $0$ in $\mathcal{G}(t_1)$ and $\mathcal{G}(t_2)$ are identical. Recall $\mathcal{L}(t)$ is the set of tips at time $t$.

By Lemma \ref{Binfinitelyoften}, almost surely there exists a sequence $s_1<s_2<...$ such that $\mathcal{B}_{s_i}$ occurs and $s_i-s_{i-1}>2\kappa_C$. The idea is that each bottleneck event acts like a protective shell so that anything added after a bottleneck event can only change the part of the graph that is outside the shell instead of any r-ball contained within the shell. Take $r_0:=\min\{d(v',0)|v'\in\mathcal{L}(s_1)\}$. For any vertex $v$ that is added to the DAG $\mathcal{G}()$ at time greater than $s_1+\kappa_C$, it arrives at time $v\geq s_1+\kappa_C-h_M$ and it selected parents within $\mathcal{L}(v-\epsilon_v)$ where $v-\epsilon_v\geq s_1+\kappa_C-\epsilon_{max}-h_M$. Hence by Lemma \ref{cauchy1}, $d(v,0)>r_0$ which means anything added to the DAG after $s_1+\kappa_C$ will not affect the $r_0$-ball. Hence $D(t,t')\geq r_0$ for any $t,t'>s_1+\kappa_C$. By the same reason, we have $r_1:=\min\{d(v',0)|v'\in\mathcal{L}(s_2)\}$ is greater or equal to $r_0+1$ where $s_2-s_1>2\kappa_C$. Using induction on $i$ for $i=1,2,...$, we conclude that $D(t_1,t_2)\geq r_{i-1}=r_0+i-1$ for $t,t'>s_{i}+\kappa_C$ and $r_{i}:=\min\{d(v',0)|v'\in\mathcal{L}(s_{i+1})\}$ is greater or equal to $r_{i-1}+1=r_0+i$.  Hence for any $k>r_0\geq 0$ we can find a $T=k-r_0+1$ such that for any $t,t'>s_T+\kappa_C$, we have $d_*(\mathcal{G}(t),\mathcal{G}(t'))=1/(1+D(t_1,t_2))\leq 1/(1+k)$. Hence the sequence $(\mathcal{G}(t_i))_{i\in\mathbb{N}}$ is almost surely Cauchy. As established in \cite{Da07}, the metric space $(\mathcal{S}_*,d_*)$ is separable and complete, i.e. a Polish space. Hence the limit of the sequence exist $(\mathcal{G}(t_i))_{i\in\mathbb{N}}$. Since for any $\delta>0$, there exist a T such that for any $t,t'>T$, $d_*(\mathcal{G}(t),\mathcal{G}(t'))\leq \delta$, which means $d_*(\mathcal{G}(t),\cup_{i=1}^{\infty} \mathcal{G}(t_i))\leq \delta$, we get that the limit equals to $\cup_{i=1}^{\infty} \mathcal{G}(t_i)$.
\end{proof}

We are now ready to establish one-endedness and complete the proof of Theorem \ref{mainresult}. 

\begin{proof}[Proof of Theorem \ref{mainresult}] By Lemma \ref{temp5}, all that left to show in Theorem \ref{mainresult} is one-endedness. To prove this result, by definition \ref{d31}, \ref{d32} and \ref{d33}, it suffices to show for any two rays $r_1,r_2$, there exist a third ray $r_3$ that intersect both of them infinitely often. We will construct the third ray $r_3$ using the bottleneck events. 

Notice that since all the edges must goes from the vertex with bigger label to the vertex with the smaller label, then a ray must be an increasing sequence or else it will not be a infinite sequence. Let $v_0:=0$ which is the root of the whole DAG and $v_1$ be the second vertex in $r_2$ where any vertex has a path to root which is vertex $0$ . Since $\mathcal{B}_i$ happens infinitely often, we can find $i_1$ such that $B_{i_1}$ happens after $v_1$ finishes its POW. By Lemma \ref{temp4}, $v_1$ will be linked by all the vertices arrives starting at $t_{i_1}+\kappa_C$. Therefore there must be a vertex $v_2$ in $r_2$ such that it has a path to $v_1$, and we will use this path to construct $r_3=(v_0,...,v_1,....,v_2,....)$. Since $\mathcal{B}_i$ happens infinitely often there will be no problem repeating this process to find $i_k$ such that $i_{k-1}-i_k>2\kappa_C$ and get the third ray $r_3=(v_0,...,v_1,...,v_2,...,v_3,...,v_4,...)$ where $v_k\in r_2$ if $k$ is odd and $v_k\in r_1$ if $k$ is even. The vertices between $v_j$ to $v_{j+1}$ in $r_3$ are the path from $v_{j+1}$ to $v_j$ which exists by Lemma \ref{temp4}.
\end{proof}

Theorem \ref{mainresult} establishes the crucial result that the limiting DAG $\mathcal{G}(\infty)$ which models the dynamic of IOTA distributed ledger  is one-ended, such result is important and essential to consensus of distributed ledger. If a vertex $v$ is reachable from a vertex $v'$, then we say that vertex $v$ is verified by $v'$ because one will need to redo the POW corresponding to $v'$ to alter the information in $v$. When talking about digital ledger, we are often interested in the number of vertices that are verified by all but finitely many vertices. We refer to these vertices as confirmed vertices because the the information contained in these nodes are secured by all but finitely many future vertices. 
\begin{defn}
A vertex $v$ is confirmed in the infinite DAG is confirmed if all but finitely many later vertices have paths to $v$.
\end{defn}
By Lemmas \ref{temp4} and \ref{Binfinitelyoften}, we have Theorem \ref{confirmed} which establish the result that each vertices in the DAG are secured by almost all future vertices and the computational power used to finish POWs are efficiently used to secure the information stored in the distributed ledger.
\begin{thm}\label{confirmed}
    For the model assumed in Section \ref{model}. With probability 1, all vertices are confirmed in the DAG $\mathcal{G}(\infty):=\lim_{i\rightarrow\infty}(\mathcal{G}(t_i))$.
\end{thm}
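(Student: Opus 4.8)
The plan is to read Theorem \ref{confirmed} off directly from Lemmas \ref{temp4} and \ref{Binfinitelyoften}, since the substantive work has already been carried out in the construction of the bottleneck events. Fix an arbitrary vertex $v\in\mathbb{N}$. Because $v$ arrives at the finite time $t_v$ and completes a proof of work of finite duration $\Theta_v\le h_M$, there is a finite $n_v$ with $v\in\mathcal{V}(t_n)$ for all $n\ge n_v$. By Lemma \ref{Binfinitelyoften}, on an event of probability $1$ the bottleneck events $\mathcal{B}_i$ occur for infinitely many $i$; in particular we may select one such index $i=i(v)$ with $t_i+\kappa_B>t_{n_v}$, so that $v\in\mathcal{V}(t_i+\kappa_B)=\mathcal{V}_i^-$.

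Next I would apply Lemma \ref{temp4} with this $i$: on $\mathcal{B}_i$, every vertex $v^+\in\mathcal{V}_i^+=\{w\in\mathbb{N}:w>t_i+\kappa_C\}$ has a directed path to $v$ in $\mathcal{G}(\infty)=\cup_t^\infty\mathcal{G}(t)$. Hence the only vertices with label larger than $v$ that can possibly fail to reach $v$ lie in the finite set $\{w\in\mathbb{N}:v<w\le t_i+\kappa_C\}$. Therefore all but finitely many later vertices have a directed path to $v$, which is precisely the statement that $v$ is confirmed.

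Finally, although the index $i(v)$ and the exceptional finite set depend on $v$, the single probability-$1$ event $\{\mathcal{B}_i\text{ occurs infinitely often}\}$ does not; on that event the preceding paragraph yields a confirmation statement for every vertex $v$ simultaneously. Consequently, with probability $1$ all vertices of $\mathcal{G}(\infty)$ are confirmed, which is exactly Theorem \ref{confirmed}. I do not anticipate a genuine obstacle here: the lemmas do all the work, and the only point needing a brief check is that $v$ has indeed been absorbed into the DAG by time $t_i+\kappa_B$ for the chosen $i$, which is immediate from the finiteness of $t_v$ and $\Theta_v$ together with $\mathcal{B}_i$ occurring for arbitrarily large $i$.
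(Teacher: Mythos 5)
Your argument is correct and is exactly the route the paper takes: the paper derives Theorem \ref{confirmed} directly from Lemma \ref{Binfinitelyoften} (bottleneck events occur infinitely often, on a single probability-one event) together with Lemma \ref{temp4} (on $\mathcal{B}_i$, every vertex arriving after $t_i+\kappa_C$ reaches every vertex of $\mathcal{V}(t_i+\kappa_B)$), choosing for each fixed $v$ a sufficiently late bottleneck index. Your write-up merely makes explicit the finitely-many-exceptions and measurability points that the paper leaves implicit.
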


Since different kinds of parent selection algorithms are used in practice, a natural question arises as to whether one-endedness holds for more general situation. The idea used to prove Theorem \ref{mainresult} with modification of proof to lemma \ref{L<b} can be adapted accordingly. In the model defined in section \ref{model}, if each vertices select $k>1$ parents instead, then the infinite DAG is still one-ended since the supermartingale we construct in Lemma \ref{L<b} will still tend to go down when the number of free tips is sufficiently large. By the same logic, it can be proved that one-ended property also holds if each vertex selects a random number of parents where the number is drawn from an i.i.d process. For example, each arrival has probability $p$ of choosing 3 parents and $1-p$ choosing 1 parent.
\begin{prop}\label{generization}
    Assume a similar model as in Section \ref{model} except that each vertex $i$ selects $k$ parents with replacement from $\mathcal{L}(t_i-\epsilon_{i})$. Then Theorems \ref{mainresult} and \ref{confirmed} still hold.
\end{prop}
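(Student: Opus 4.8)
The plan is to run the proof of Theorems \ref{mainresult} and \ref{confirmed} essentially verbatim, since the number of parents ``$2$'' enters quantitatively only in two places: (i) Lemma \ref{L<b}, through the bounds $W(t_n)\le 2h_M$, $\delta_n\le 2$ and the drift computation for the auxiliary supermartingale; and (ii) the positivity statements Proposition \ref{conditionA} and Lemmas \ref{conditionB}, \ref{conditionC}, together with the lower bound $\rho(\kappa_C)$ in Lemma \ref{Binfinitelyoften}, where the probability of a prescribed parent configuration carries an exponent equal to the number of parents. Every purely combinatorial reachability statement (Lemmas \ref{temp1}, \ref{temp2}, \ref{temp3}, \ref{temp4}, \ref{cauchy1}, \ref{temp5} and the final ray-splicing argument in the proof of Theorem \ref{mainresult}) is insensitive to the parent count, so these transfer unchanged once the two quantitative inputs above are re-derived.

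First I would restate and reprove Lemma \ref{L<b} with $k$ parents. The pending-tip bound becomes $W(t_n)\le k h_M$, and $\delta_n\le k$. The recounting identity \eqref{doublesum}--\eqref{randombound} only involves how many POWs complete per step, which is still at most $M$, so the auxiliary process $Y(t_n)$ with increments $1-\delta_{n-1}$ still satisfies $|F(t_n)-Y(t_n)|\le 2(h_M-h_1)=:\Delta_{Y,F}$. For the drift, when $F(t_k)$ exceeds a threshold $a^\ast$ of the form $k h_M + c\,M\epsilon_{\max}+c'$ with $c,c'$ absolute constants, the set of tips persisting from $t_k-\epsilon_k$ to $t_k$ that are free forms strictly more than half of $\mathcal{L}(t_k-\epsilon_k)$, so each of the $k$ independent draws lands in this ``good'' set with probability $>1/2$; since $k\ge 2$, exactly as in the paper $P(\delta_k\ge 2\mid F(t_k)>a^\ast)\ge 1/4$ once the number of good tips is at least $2$, hence $\mathbb{E}(\delta_k\mid F(t_k)>a^\ast)>1$ and $Y$ is a supermartingale satisfying the hypotheses of Lemma \ref{randomwalk} with $p'=1/4$. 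This yields $L(t_n)\le b'$ infinitely often for an explicit $b'=b'(k,h_M,h_1,M,\epsilon_{\max})$, which replaces $b$ throughout.

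Second I would re-run Steps A, B, C with the only change that in \eqref{stepA1}--\eqref{stepA} and \eqref{stepBrule1}--\eqref{stepBrule} each arriving vertex sends all $k$ of its parents to the same vertex, and in \eqref{stepcrule1}--\eqref{Yselect} it sends its first two parents to $(i,j-1,\max(1,k-1))$ and $(i,j-1,\min(k+1,c_i))$ as before and the remaining $k-2$ parents to the first of these (already shown to lie in $\mathcal{L}(t_v-\epsilon_{\min})$ in each case of the proof of Lemma \ref{conditionC}). Lemmas \ref{temp1}, \ref{temp2}, \ref{temp3} are unaffected, since the reachability they assert only uses these two ``mesh'' edges. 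The conditional-probability bounds now read $P(\,\cdot\mid\cdot\,)\ge\big(\min_j p_{\Theta,j}\cdot\min_j p_{\epsilon,j}/L(\cdot)\big)^{k}>0$ at each step, so $\rho(\kappa_C)$ in \eqref{pkappa} becomes $\big(\min p_\Theta\cdot\min p_\epsilon/(b'+2\kappa_C M k)\big)^{k\kappa_C}>0$, still strictly positive, which is all the Borel--Cantelli-type argument of Lemma \ref{Binfinitelyoften} requires. Lemmas \ref{cauchy1}, \ref{temp4}, \ref{temp5} and the proofs of Theorems \ref{mainresult} and \ref{confirmed} then go through unchanged, noting the degree bound in Lemma \ref{temp5} is still finite: out-degree at most $k$, and in-degree at most $h_M+1$ since a vertex can be chosen as a parent only during the $\le h_M$ steps before the first vertex selecting it completes its POW.

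For the random-parent variant mentioned before the proposition, where vertex $i$ picks $K_i$ parents with $(K_i)$ i.i.d.\ of finite support and mean $\mathbb{E}(K)>1$, the same plan works after one extra estimate, which I expect to be the only real obstacle. In the modified Lemma \ref{L<b} we again need $\mathbb{E}(\delta_k\mid F(t_k)>a^\ast)>1$, but because draws are with replacement $\delta_k$ can fall below $K_k$ when draws collide; to control this I would raise the threshold $a^\ast$ so that the number $G$ of good persisting free tips is large, whence the expected number of coincident pairs among the at most $K_{\max}$ draws is $O(K_{\max}^2/G)$, giving $\mathbb{E}(\delta_k\mid F(t_k)>a^\ast)\ge q\,\mathbb{E}(K)-O(K_{\max}^2/G)$ with $q>1/2$; taking $a^\ast$ large enough (so $q$ near $1$ and $G$ large) makes this exceed $1$ since $\mathbb{E}(K)>1$. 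Everything downstream is identical, with $k$ replaced by $K_{\max}$ in the exponents of the positivity bounds. The structural heart of the argument --- the bottleneck construction and its infinitely-often recurrence --- is reused wholesale; only the collision estimate and the bookkeeping of enlarged constants are new.
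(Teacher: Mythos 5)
Your proposal is correct and follows essentially the same route as the paper, which itself only sketches this proposition by noting that the supermartingale argument of Lemma \ref{L<b} adapts to $k$ parents and that the rest of the bottleneck machinery carries over; your write-up simply fills in the bookkeeping (the bounds $W(t_n)\le kh_M$, $\delta_n\le k$, the modified Steps A--C, the exponent $k$ in $\rho(\kappa_C)$, and the degree bounds) in more detail than the paper does. The only point worth flagging is that for $k>2$ the increments $1-\delta_n$ of the auxiliary process are no longer confined to $\{-1,0,1\}$, so Lemma \ref{randomwalk} needs the obvious extension to bounded increments, a harmless adjustment the paper also leaves implicit.
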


There are many important topics related to distributed ledgers, some of which arise from the variety of models used to define the dynamics of a distributed ledger. For example, generalization of the main result to the model assuming continuous distribution for vertex arriving time and duration of POW is also interesting.

%
%
%
%

\end{document}